\newtheorem{thm}{Theorem}[section]
\newtheorem{lem}{Lemma}[section]
\newtheorem{cor}{Corollary}[section]
\newtheorem{prop}{Proposition}[section]
\theoremstyle{remark}
\newtheorem{ex}{Example}[section]
\theoremstyle{definition}
\begin{document}
\title{\bf {Repeated-root constacyclic codes over finite \\commutative chain rings and their distances}}
\author{ Anuradha Sharma{\footnote{Corresponding Author, Email address: anuradha@iiitd.ac.in} and Tania Sidana}\\
{\it Department of Mathematics, IIIT-Delhi}\\{\it New Delhi 110020, India}}
\date{}
\maketitle
\begin{abstract} Let $\mathcal{R}_e$ be a finite commutative chain ring with nilpotency index $e \geq 2.$ In this paper, all repeated-root constacyclic codes of arbitrary lengths  over $\mathcal{R}_{2},$ their sizes and their dual codes are determined.  As an application, some isodual constacyclic codes  over $\mathcal{R}_{2}$ are also listed. Moreover, Hamming distances, Rosenbloom-Tsfasman distances and Rosenbloom-Tsfasman weight distributions of all repeated-root constacyclic codes over $\mathcal{R}_{2}$ and some repeated-root constacyclic codes over $\mathcal{R}_{e}$ are determined. \end{abstract}
{\bf Keywords:}  Cyclic codes; Negacyclic codes; Local rings. 
\\{\bf 2010 Mathematics Subject
 Classification:} 94B15.
 \vspace{-6mm}\section{Introduction}\label{intro}\vspace{-2mm}
Constacyclic codes over finite fields are introduced and studied by Berlekamp \cite{berl} in 1960's. These codes are generalizations of cyclic and negacyclic codes, and can be effectively encoded and decoded using shift registers.  In 1990's, Calderbank et al. \cite{calder}, Hammons et al.  \cite{hammons} and Nechaev \cite{nech} related many non-linear codes over finite fields  to linear codes over the ring $\mathbb{Z}_{4}$ of integers modulo 4 with the help of a Gray map.  This motivated many researchers to study codes over $\mathbb{Z}_{4}$ in particular and to study codes over  finite commutative chain rings in general.   

Towards this, Abualrub and Oehmke \cite{abu} studied all cyclic codes of length $2^s$ over $\mathbb{Z}_{4},$ where $s$ is a positive integer. Later, Dinh and L$\acute{o}$pez-Permouth \cite{dinh} established algebraic structures of cyclic and negacyclic codes of length $n$ over a finite commutative chain ring $R$ and their dual codes,   provided the length $n$ and the characteristic of the residue field of $R$ are coprime. Besides this, they determined all negacyclic codes of length $2^s$ over the ring $\mathbb{Z}_{2^m}$ of integers modulo $2^m$ and their dual codes, where $s\geq 1$ and  $m \geq 2$ are  integers. In a related work,  Kiah et al. \cite{kiah} determined all repeated-root cyclic codes of length $p^s$ over Galois rings with nilpotency index 2, where $p$ is a prime. Later, Sobhani and Esmaeili \cite{sobh} studied all repeated-root cyclic and negacyclic codes of arbitrary lengths over Galois rings with nilpotency index 2. In a  related work, Batoul et al. \cite{bat}  proved that  repeated-root $\lambda$-constacyclic  codes over a finite chain ring $R$ are equivalent to cyclic codes when $\lambda$ is an $n$th power of a unit in  $R.$  
 
 Next let $p$ be a prime, $s,m$ be positive integers, and let $\mathbb{F}_{p^m}$ be the finite field of order $p^m.$  Dinh \cite{dinh4} obtained all constacyclic codes of length $p^s$ over the finite commutative chain ring $\mathbb{F}_{p^m}[u]/\langle u^2 \rangle$ and their Hamming distances.  Chen et al. \cite{chen}, Dinh et al. \cite{dinh5} and Liu et al. \cite{liu} determined all constacyclic codes of length $2p^s$ over the ring $\mathbb{F}_{p^m}[u]/\langle u^2 \rangle,$ where $p$ is an odd prime.   Later, Sharma and Rani \cite{sharma1, sharma2} determined  all constacyclic codes of length $4p^s$ over $\mathbb{F}_{p^m}[u]/\langle u^2 \rangle,$ their sizes and their dual codes,  where $p$ is an odd prime and $s,m$ are positive integers. They also listed some isodual constacyclic codes of length $4p^s$ over $\mathbb{F}_{p^m}[u]/\langle u^2 \rangle.$ Using a different approach, Cao et al. \cite{cao} determined  all repeated-root $\lambda$-constacyclic codes of arbitrary lengths  over $\mathbb{F}_{p^m}[u]/\langle u^2 \rangle$ and their dual codes by writing a canonical form decomposition for each code, where $\lambda $ is a non-zero element of $\mathbb{F}_{p^m}.$ Later, Zhao et al. \cite{zhao} determined   all repeated-root $(\alpha+\beta u)$-constacyclic codes of arbitrary lengths over $\mathbb{F}_{p^m}[u]/\langle u^2 \rangle$ and their dual codes, where $\alpha,\beta$ are non-zero elements of $\mathbb{F}_{p^m}.$ Thus the algebraic structure of all repeated-root constacyclic codes of arbitrary lengths over $\mathbb{F}_{p^m}[u]/\langle u^2 \rangle$ is established. Recently, Sharma and Sidana \cite{sharma3} determined all repeated-root constacyclic codes of arbitrary lengths over the finite commutative chain ring $\mathbb{F}_{p^m}[v]/\langle v^3 \rangle,$ their sizes and their dual codes. 
In a subsequent work, Dinh et al. \cite{dinh6} studied repeated-root $(\lambda_0^{p^s}+\gamma w)$-constacyclic codes of length $p^s$ over  a finite commutative chain ring $R$ with the maximal ideal as $\left<\gamma \right>,$ where $p$ is a prime number, $s \geq 1$ is an integer and $\lambda_0, w$ are units in $R.$ The constraint that $w$ is a unit in $R$ restricts their study to only a few special classes of constacyclic codes of length $p^s$ over $R.$ The codes belonging to these special classes can be viewed as ideals of the finite commutative chain ring  $R[x]/\langle x^{p^s}-\lambda_0^{p^s}-\gamma w\rangle,$ and hence are principal ideals. On the other hand, when $w$ is a non-unit in $R,$ the ring  $R[x]/\langle x^{p^s}-\lambda_0^{p^s}-\gamma w\rangle$ is a non-chain ring and has non-principal ideals.  In a related work,  Dinh et al. \cite[Prop. 6.3-6.5]{dinh8} determined algebraic structures of  all  $(4z-1)$-constacyclic codes of length $2^s$ over $GR(2^e, m)$ and their dual codes, where  $z$ is a unit in $GR(2^e,m),$ (throughout this paper, $GR(2^e,m)$ denotes the Galois ring of characteristic $2^e$ and cardinality $2^{em}$).  He also determined their Hamming, Homogenous and Rosenbloom-Tsfasman distances, and their Rosenbloom-Tsfasman weight distributions.  However, we noticed an error in Proposition 6.5 of Dinh et al.  \cite{dinh8}  on Rosenbloom-Tsfasman weight distributions, which is illustrated in Example \ref{ex} and is rectified in Theorem  \ref{6thmRT}.

Throughout this paper, let $\mathcal{R}_{e}$ be the finite commutative chain ring with nilpotency index $e \geq 2.$ The main goal of this paper is to determine all repeated-root constacyclic codes of arbitrary lengths  over $\mathcal{R}_{2},$ their sizes,  their dual codes, their Hamming and Rosenbloom-Tsfasman distances, and Rosenbloom-Tsfasman weight distributions.  We also determine algebraic structures, Hamming distances, Rosenbloom-Tsfasman distances and Rosenbloom-Tsfasman weight distributions of some repeated-root constacyclic codes of length $np^s$ over $\mathcal{R}_{e}.$  

This paper is organized as follows: In Section \ref{prelim}, we state some basic definitions and  results  that are needed to derive our main results. In Section \ref{sec3}, we determine all repeated-root constacyclic codes of arbitrary lengths  over $\mathcal{R}_2,$ their sizes and their dual codes (Propositions \ref{p1}, \ref{p2} and Theorems \ref{thm1}, \ref{thm2} -\ref{dual}).  We also determine  their Hamming and Rosenbloom-Tsfasman distances, and Rosenbloom-Tsfasman weight distributions (Theorems \ref{dthm1}-\ref{dthmRw}, \ref{dthm2}-\ref{RTW2}). As an application of these results, we  obtain some isodual constacyclic codes of arbitrary lengths over $\mathcal{R}_2$ (Corollaries \ref{c1} and \ref{c2}).  In Section \ref{sec4},  we  determine  Hamming distances, Rosenbloom-Tsfasman distances and Rosenbloom-Tsfasman weight distributions of some repeated-root constacyclic codes of length $np^s$ over $\mathcal{R}_{e}$ for any integer $e \geq 2$ (Theorems \ref{6thm1}-\ref{6thmRT}). 
\vspace{-6mm}\section{Some preliminaries}\label{prelim}\vspace{-3mm}
Let $R$ be a finite commutative ring with unity,  $N$ be a positive integer,  $R^N$ be the $R$-module consisting of all $N$-tuples over $R,$ and let $\lambda $ be a unit in $R.$ Then a $\lambda$-constacyclic code $\mathcal{C}$ of length $N$ over $R$ is defined as an $R$-submodule of $R^N$ satisfying the following: $(a_0,a_1,\cdots,a_{N-1})\in \mathcal{C}$ implies that $(\lambda a_{N-1},a_0,a_1,\cdots,a_{N-2}) \in \mathcal{C}.$  An important parameter of the code $\mathcal{C}$ is its  Hamming distance,  which  is a measure of its error-detecting and error-correcting capabilities. The Hamming distance of $\mathcal{C},$ denoted by $d_H(\mathcal{C}),$ is defined as $d_H(\mathcal{C})=\min\{w_H(c): c \in \mathcal{C}\setminus \{0\}\},$ where  $w_H(c)$ equals the number of non-zero components of $c$ and is called the Hamming weight of $c.$   Another important metric in coding theory is the Rosenbloom-Tsfasman (RT) metric, which has applications in uniform distributions. The Rosenbloom-Tsfasman (RT) distance  of the code $\mathcal{C},$ denoted by $d_{RT}(\mathcal{C}),$ is defined as $d_{RT}(\mathcal{C})=\min\{ w_{RT}(c) | c\in \mathcal{C} \setminus \{ 0\}\},$ where  $w_{RT}(c)$ is the RT weight of $c$ and is defined as  \vspace{-2mm}\begin{equation*}w_{RT}(c)=\left\{\begin{array}{ll} 1+ \max \{ j | c_j \neq 0\} & \text{if } c=(c_0,c_1,\cdots,c_{N-1})  \neq 0; \\ 0 & \text{if }c=0.\end{array}\right.\vspace{-2mm}\end{equation*} The Rosenbloom-Tsfasman (RT)  weight distribution of $\mathcal{C}$ is defined as the list $\mathcal{A}_{0}, \mathcal{A}_{1},\cdots,\mathcal{A}_{N},$ where for $0 \leq \rho \leq N,$ $\mathcal{A}_{\rho}$ denotes the number of codewords in $\mathcal{C}$ having the RT weight as $\rho.$

The dual code of $\mathcal{C},$ denoted by $\mathcal{C}^{\perp},$ is defined as $\mathcal{C}^{\perp}=\{u \in R^N: u.c=0 \text{ for all }c \in R^N\},$ where $u.c=u_0c_0+u_1c_1+\cdots + u_{N-1}c_{N-1}$ for $u=(u_0,u_1,\cdots,u_{N-1})$ and $c=(c_0,c_1,\cdots,c_{N-1})$ in $R^N.$ One can observe that $\mathcal{C}^{\perp}$ is a $\lambda^{-1}$-constacyclic code of length $N$ over $R.$  We say that the code $\mathcal{C}$ is isodual if it is $R$-linearly equivalent to its dual code. 
Further, under the $R$-module isomorphism  from $ R^N $ onto $R[x]/\langle x^N-\lambda\rangle,$ defined as $(a_0,a_1,\cdots,a_{N-1}) \mapsto a_0+a_1 x+\cdots+a_{N-1}x^{N-1}+\langle x^N-\lambda \rangle$ for each $(a_0,a_1,\cdots,a_{N-1})\in R^N,$ the code $\mathcal{C}$ can also  be viewed as an ideal of the quotient ring $R[x]/\langle x^N-\lambda\rangle.$ In the light of this, the study of $\lambda$-constacyclic codes of length $N$ over $R$ is equivalent to the study of ideals of the quotient ring $R[x]/\langle x^N-\lambda\rangle.$ From now on, all the elements of $R[x]/\langle x^N-\lambda \rangle$ shall be represented by their representatives in $R[x]$ of degree less than $N,$ and their addition and multiplication shall be performed  modulo $x^N-\lambda.$ In view of this,  the Hamming weight $w_H(c(x))$ of $c(x) \in R[x]/\langle x^N-\lambda \rangle$ is defined  as the number of non-zero coefficients of $c(x),$ while the RT weight $w_{RT}(c(x))$ of $c(x) \in R[x]/\langle x^N-\lambda \rangle$ is defined as $w_{RT}(c(x))=\left\{\begin{array}{ll} 1+\text{deg }c(x) & \text{if }c(x) \neq 0;\\ 0 & \text{if }c(x)=0,\end{array}\right.$  (throughout this paper, $\text{deg }f(x)$ denotes the degree of a non-zero polynomial $f(x) \in R[x]$). Further, it is easy to observe that the dual code $\mathcal{C}^{\perp}$ of $\mathcal{C}$  is given by $\mathcal{C}^{\perp}=\{u(x) \in  R[x]/\langle x^N-\lambda^{-1}\rangle: u(x)c^*(x)=0 \text{ in } R[x]/\langle x^N-\lambda^{-1}\rangle \text{ for all } c(x) \in \mathcal{C}\},$ where $c^*(x)=x^{\text{deg }c(x)}c(x^{-1})$ for all $c(x) \in \mathcal{C}\setminus\{0\}$ and $c^*(x)=0$ if $c(x)=0.$ The annihilator of $\mathcal{C}$ is defined as $\text{ann}(\mathcal{C})=\{f(x) \in R[x]/\langle x^N-\lambda\rangle: f(x) c(x)=0 \text{ in }R[x]/\langle x^N-\lambda\rangle \text{ for all } c(x) \in \mathcal{C}\},$ which is clearly an  ideal of $R[x]/\langle x^N-\lambda\rangle.$ Further, if $I$ is an ideal  of $R[x]/\langle x^N-\lambda \rangle,$ then we define $I^*=\{f^*(x): f(x) \in I\},$ where $f^*(x)= x^{\text{deg }f(x)}f(x^{-1})$ if $f(x) \neq 0$ and $f^*(x)=0$ if $f(x)=0.$ It is easy to see that $I^*$ is an ideal of the ring $R[x]/\langle x^N-\lambda^{-1}\rangle.$ Now the following result is well-known.                       
\vspace{-2mm}\begin{lem}\cite{chen} \label{pr2}Let $\mathcal{C} \subseteq R[x]/\langle x^N-\lambda\rangle$ be a $\lambda$-constacyclic code of length $N$ over $R.$ Then we have $\mathcal{C}^{\perp}=\text{ann}(\mathcal{C})^*.$\vspace{-2mm}\end{lem}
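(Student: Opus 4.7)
The plan is to prove the two inclusions $\text{ann}(\mathcal{C})^* \subseteq \mathcal{C}^{\perp}$ and $\mathcal{C}^{\perp} \subseteq \text{ann}(\mathcal{C})^*$ using the paper's characterization of $\mathcal{C}^{\perp}$ via multiplication by $c^*(x)$. The workhorse is the following reciprocal identity: whenever $a(x), b(x), h(x) \in R[x]$ satisfy $a(x) b(x) = h(x)(x^N - \lambda)$, substituting $x \mapsto x^{-1}$ and multiplying through by $x^{\deg a + \deg b}$ yields
\[ a^*(x)\, b^*(x) \;=\; -\lambda \, x^{\alpha} \, h^*(x)(x^N - \lambda^{-1}), \]
where $\alpha = \deg a + \deg b - N - \deg h \geq 0$ (the inequality uses $\deg h + N = \deg(ab) \leq \deg a + \deg b$ when $h \neq 0$). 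In particular $a^*(x) b^*(x) \equiv 0 \pmod{x^N - \lambda^{-1}}$, and symmetrically a relation modulo $x^N - \lambda^{-1}$ yields one modulo $x^N - \lambda$.

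For the first inclusion, take $f \in \text{ann}(\mathcal{C})$. For every $c \in \mathcal{C}$, the equation $f(x) c(x) = h(x)(x^N - \lambda)$ in $R[x]$ combined with the identity above gives $f^*(x) c^*(x) \equiv 0 \pmod{x^N - \lambda^{-1}}$, so $f^* \in \mathcal{C}^{\perp}$.

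For the reverse inclusion, take $u \in \mathcal{C}^{\perp}$. For every $c \in \mathcal{C}$ we have $u(x) c^*(x) \equiv 0 \pmod{x^N - \lambda^{-1}}$; running the reciprocal trick backwards yields $u^*(x)(c^*)^*(x) \equiv 0 \pmod{x^N - \lambda}$. A direct computation shows $(c^*)^*(x) = c(x)/x^{v(c)}$, where $v(c)$ denotes the largest power of $x$ dividing $c(x)$; since $x$ is a unit in $R[x]/\langle x^N - \lambda\rangle$, multiplying through yields $u^*(x) c(x) \equiv 0 \pmod{x^N - \lambda}$, so $u^* \in \text{ann}(\mathcal{C})$. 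Now write $u(x) = x^{v(u)} \tilde{u}(x)$ with $\tilde{u}$ having nonzero constant term; then $(u^*)^*(x) = \tilde{u}(x) \in \text{ann}(\mathcal{C})^*$, and using that $\text{ann}(\mathcal{C})^*$ is an ideal of $R[x]/\langle x^N - \lambda^{-1}\rangle$ (closed under multiplication by the unit $x$), we conclude $u = x^{v(u)} \tilde{u} \in \text{ann}(\mathcal{C})^*$.

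The main obstacle is the bookkeeping around the reciprocal operation: the map $f \mapsto f^*$ is not a strict involution, since $(f^*)^* = f/x^{v(f)}$ when $f$ has zero constant term, and the multiplicativity $(ab)^* = a^* b^*$ can fail up to a factor of $x^k$ when leading coefficients cancel (which is possible over rings with zero divisors). Both difficulties are resolved by exploiting that $x$ is a unit in both $R[x]/\langle x^N - \lambda\rangle$ and $R[x]/\langle x^N - \lambda^{-1}\rangle$, so any extra $x$-factors can be freely absorbed without changing ideal membership.
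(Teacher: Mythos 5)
The paper does not prove this lemma at all --- it is quoted from Chen et al.\ \cite{chen} and labelled ``well-known'' --- so there is no internal proof to compare against; your proposal supplies a genuine, self-contained argument. The argument is correct. The substitution identity is the right workhorse: from $a(x)b(x)=h(x)(x^N-\lambda)$ in $R[x]$ one gets $a^*(x)b^*(x)=-\lambda\,x^{\alpha}h^*(x)(x^N-\lambda^{-1})$ with $\alpha=\deg a+\deg b-\deg h-N\geq 0$, the inequality holding because $x^N-\lambda$ is monic (so $\deg\bigl(h\cdot(x^N-\lambda)\bigr)=\deg h+N$) while $\deg(ab)\leq\deg a+\deg b$ even in the presence of zero divisors. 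This gives $\mathrm{ann}(\mathcal{C})^*\subseteq\mathcal{C}^{\perp}$ directly from the paper's displayed characterization of $\mathcal{C}^{\perp}$ via $u(x)c^*(x)=0$, and the reverse inclusion follows by the same identity with $\lambda$ replaced by $\lambda^{-1}$, together with $(c^*)^*(x)=c(x)/x^{v(c)}$ and the fact that $x$ is a unit in both quotient rings (its inverse being $\lambda^{-1}x^{N-1}$, resp.\ $\lambda x^{N-1}$), which absorbs all the stray powers of $x$; the final step $u=x^{v(u)}(u^*)^*\in\mathrm{ann}(\mathcal{C})^*$ leans on the paper's prior assertion that $I^*$ is an ideal of $R[x]/\langle x^N-\lambda^{-1}\rangle$, which is a fair thing to invoke. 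The only inputs you take for granted are exactly the two observations the paper itself records before stating the lemma, so the proof is complete at the level of rigor the paper operates at.
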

A commutative ring with unity is called  (i) a local ring if  it has a unique maximal ideal and (ii) a chain ring if  all its ideals form a chain with respect to the  inclusion relation.  Then the following hold.
\vspace{-2mm}\begin{prop}\label{pr1}\cite{dinh} For a finite commutative ring $R$ with unity, the following statements are equivalent:
\vspace{-2mm}\begin{enumerate}\item[(a)] $R$ is a local ring and the (unique) maximal ideal $\mathcal{M}$ of $R$ is principal,  i.e., $\mathcal{M}=\langle\gamma \rangle$ for some $\gamma \in R.$
\vspace{-2mm}\item[(b)] $R$ is a local principal ideal ring.
\vspace{-2mm}\item[(c)] $R$ is a chain ring  and all its ideals are given by $\{0\}, R,$ $ \langle \gamma \rangle,  \langle\gamma^2\rangle, \cdots, \langle\gamma^{e-1}\rangle,$ where $e$ is the nilpotency index of $\gamma.$ Moreover, if $\overline{R}= R/\langle\gamma \rangle,$ then $\overline{R}$ is a finite field (called the residue field of $R$) and $|\langle\gamma^\ell \rangle|=|\overline{R}|^{e-\ell}$ for $0 \leq \ell \leq e.$ (Throughout this paper, $|A|$ denotes the cardinality of the set $A.$)
\end{enumerate}  \vspace{-5mm}\end{prop}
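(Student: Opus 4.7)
The plan is to establish the cyclic chain of implications $(a) \Rightarrow (b) \Rightarrow (c) \Rightarrow (a)$ and then verify the cardinality formula separately. The single technical fact that drives all three implications is the following factorization: in a finite local ring $R$ with principal maximal ideal $\langle\gamma\rangle$, every non-zero element $x \in R$ can be written as $x = \gamma^k u$ with $u$ a unit and $0 \le k < e$, where $e$ is the nilpotency index of $\gamma$. I would establish this from two observations. First, since $R$ is finite and hence Artinian, $\langle\gamma\rangle$ is nilpotent, so there is a least $e$ with $\gamma^e = 0$. Second, in a local ring every element is either a unit or lies in the maximal ideal; iterating this dichotomy for $x \in \langle\gamma\rangle$ (writing $x = \gamma y_1$ and then examining $y_1$) must terminate in a unit factor, since otherwise $x \in \bigcap_k \langle\gamma^k\rangle = \{0\}$ by nilpotency.

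For $(a) \Rightarrow (b)$, I would fix a non-zero ideal $I$ and choose the least $k_0$ such that some element of the form $\gamma^{k_0} u$ with $u$ a unit lies in $I$; multiplying by $u^{-1}$ yields $\gamma^{k_0} \in I$, and the factorization together with the minimality of $k_0$ forces $I \subseteq \langle\gamma^{k_0}\rangle$. Hence $I = \langle\gamma^{k_0}\rangle$ is principal. For $(b) \Rightarrow (c)$, the same factorization shows that the complete list of ideals of $R$ is $\{0\} \subsetneq \langle\gamma^{e-1}\rangle \subsetneq \cdots \subsetneq \langle\gamma\rangle \subsetneq R$, which is totally ordered by inclusion. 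The implication $(c) \Rightarrow (a)$ is essentially tautological: by hypothesis, the largest proper ideal is $\langle\gamma\rangle$, which is manifestly principal, and its uniqueness among maximal ideals makes $R$ local.

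To establish the cardinality statement, I would examine each successive quotient $\langle\gamma^\ell\rangle / \langle\gamma^{\ell+1}\rangle$ via the $R$-module homomorphism $\varphi_\ell \colon R \to \langle\gamma^\ell\rangle / \langle\gamma^{\ell+1}\rangle$ defined by $\varphi_\ell(r) = \gamma^\ell r + \langle\gamma^{\ell+1}\rangle$. Surjectivity is immediate, and I would argue the kernel equals $\langle\gamma\rangle$: the inclusion $\langle\gamma\rangle \subseteq \ker \varphi_\ell$ is direct, and conversely if a unit $r$ lay in the kernel, then $\gamma^\ell = r^{-1}(\gamma^\ell r) \in \langle\gamma^{\ell+1}\rangle$, contradicting $\langle\gamma^\ell\rangle \neq \langle\gamma^{\ell+1}\rangle$ for $\ell < e$. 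Thus $\langle\gamma^\ell\rangle / \langle\gamma^{\ell+1}\rangle \cong \overline{R}$ for each $0 \le \ell < e$, and a telescoping product of the orders of successive quotients yields $|\langle\gamma^\ell\rangle| = |\overline{R}|^{e-\ell}$. The only place that requires real care, in my view, is the termination argument for the factorization $x = \gamma^k u$ and the clean minimality argument that converts it into principal generation of an arbitrary ideal; everything else reduces to bookkeeping.
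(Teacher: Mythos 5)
The paper does not prove this proposition at all: it is quoted from Dinh and L\'opez-Permouth \cite{dinh} and used as a black box, so there is no internal proof to compare yours against. Your self-contained argument is the standard one and is essentially correct: the factorization $x=\gamma^k u$ with $u$ a unit and $0\le k<e$, obtained by iterating the unit/non-unit dichotomy and terminating via $\gamma^e=0$, is exactly the right engine, and the three implications together with the telescoping count of the quotients $\langle\gamma^\ell\rangle/\langle\gamma^{\ell+1}\rangle\cong\overline{R}$ all go through. The one step you lean on without justifying is the strictness $\langle\gamma^{\ell+1}\rangle\subsetneq\langle\gamma^{\ell}\rangle$ for $\ell<e$: you assert it when listing the chain in (b)$\Rightarrow$(c) and, more critically, you invoke ``contradicting $\langle\gamma^\ell\rangle\ne\langle\gamma^{\ell+1}\rangle$'' inside the kernel computation for $\varphi_\ell$, where it is doing real work. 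It is a one-line fix: if $\gamma^\ell=\gamma^{\ell+1}r$ then iterating gives $\gamma^\ell=\gamma^\ell(\gamma r)^j$ for every $j$, and taking $j\ge e$ forces $\gamma^\ell=0$, contradicting $\ell<e$ (equivalently, $1-\gamma r$ is a unit because $\gamma r$ lies in the maximal ideal). With that line added the argument is complete; for the final claim you should also record explicitly that $\overline{R}=R/\langle\gamma\rangle$ is a finite field simply because $\langle\gamma\rangle$ is maximal and $R$ is finite.
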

From now on, throughout this paper, let $\mathcal{R}_e$ be a finite commutative chain ring  with unity $1$ and with the maximal ideal (and hence the  nil radical) as $\mathcal{M}=\langle\gamma \rangle,$ where $e \geq 2$ is the nilpotency index of  the generator $\gamma$ of $\mathcal{M}.$ The ring $\mathcal{R}_{e}$ is called a finite commutative chain ring with nilpotency index $e.$ Next let $\overline{\mathcal{R}}_{e}=\mathcal{R}_{e}/\langle\gamma \rangle$ be the residue field of $\mathcal{R}_{e}.$ As $\overline{\mathcal{R}}_{e}$ is a finite field, $\text{char }\overline{\mathcal{R}}_{e}$ is a prime number, say $p.$ Let us suppose that $|\overline{\mathcal{R}}_{e}|=p^m$ for some positive integer $m.$ 
 \vspace{-2mm}\begin{prop}  \label{teich}\cite{dinh6, mcdon}  We have the following:
\begin{enumerate}\vspace{-2mm}\item[(a)] The characteristic of  $\mathcal{R}_{e}$ is $p^a,$ where $1 \leq a \leq e.$ Moreover, $|\mathcal{R}_{e}|=|\overline{\mathcal{R}_{e}}|^{e}=p^{me}.$ \vspace{-2mm}\item[(b)] There exists an element $\zeta \in \mathcal{R}_{e}$ having the multiplicative order as  $p^{m}-1.$  Moreover,  each element $r \in \mathcal{R}_{e}$ can be uniquely expressed as $r=r_0+ r_1 \gamma + r_2 \gamma^2+\cdots + r_{e-1} \gamma ^{e-1},$ where $r_i \in \mathcal{T}_{e}= \{ 0,1, \zeta,\cdots,\zeta^{p^{m}-2} \}$ for $0 \leq i \leq e-1.$ (The set $\mathcal{T}_{e}= \{ 0,1, \zeta,\cdots,\zeta^{p^{m}-2} \}$ is called the Teichm\"{u}ller set of $\mathcal{R}_{e}.$)   \vspace{-2mm}\item[(c)]  Let $r =r_0+ r_1 \gamma + r_2 \gamma^2+\cdots + r_{e-1} \gamma ^{e-1},$ where $r_i \in \mathcal{T}_{e}= \{ 0,1, \zeta,\cdots,\zeta^{p^{m}-2} \}$ for $0 \leq i \leq e-1.$ Then $r$ is a unit in $\mathcal{R}_{e}$ if and only if $r_0 \neq 0.$  Moreover, there exists $\alpha_0 \in \mathcal{T}_{e}$ satisfying $\alpha_0^{p^s}=r_0.$ \vspace{-1mm}\end{enumerate}
\vspace{-3mm}\end{prop}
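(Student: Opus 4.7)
The plan is to establish parts (a), (b), and (c) in sequence, relying on the chain structure of $\mathcal{R}_{e}$ and a Hensel lifting argument; the only genuinely non-routine ingredient is the lift used in part (b).

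For part (a), I would first pin down the characteristic. Since reducing mod $\gamma$ sends $p \cdot 1$ to $0$ in $\overline{\mathcal{R}}_{e}$, we have $p \cdot 1 \in \langle \gamma \rangle$, so $p \cdot 1 = \gamma^{k} u$ for some unit $u \in \mathcal{R}_{e}$ and some $1 \leq k \leq e$ (with $k = e$ covering the case $p \cdot 1 = 0$). Then $p^{a} \cdot 1 = \gamma^{ak} u^{a} = 0$ precisely when $ak \geq e$, giving a characteristic of the form $p^{a}$ with $1 \leq a \leq e$. For the cardinality I would use the filtration $\mathcal{R}_{e} \supset \langle \gamma \rangle \supset \cdots \supset \langle \gamma^{e} \rangle = \{0\}$: for $0 \leq i \leq e-1$, multiplication by $\gamma^{i}$ gives a surjection $\mathcal{R}_{e} \twoheadrightarrow \langle \gamma^{i} \rangle / \langle \gamma^{i+1} \rangle$ whose kernel, after an elementary verification using $\mathrm{Ann}(\gamma^{i}) = \langle \gamma^{e-i} \rangle$, turns out to equal $\langle \gamma \rangle$. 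Hence each successive factor is isomorphic (as an $\overline{\mathcal{R}}_{e}$-module) to $\overline{\mathcal{R}}_{e}$, and multiplying the $e$ orders yields $|\mathcal{R}_{e}| = |\overline{\mathcal{R}}_{e}|^{e} = p^{me}$.

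For part (b), I would build $\zeta$ by Hensel lifting. The polynomial $f(x) = x^{p^{m} - 1} - 1$ splits into distinct linear factors over $\overline{\mathcal{R}}_{e}$, and its derivative $(p^{m} - 1) x^{p^{m} - 2}$ is non-zero at every non-zero root. Since $\gamma^{e} = 0$ renders $\mathcal{R}_{e}$ trivially $\gamma$-adically complete, each non-zero root of $f$ lifts step by step along the tower $\mathcal{R}_{e} / \langle \gamma^{i+1} \rangle \to \mathcal{R}_{e} / \langle \gamma^{i} \rangle$; taking $\zeta$ to be the lift of a primitive root of $\overline{\mathcal{R}}_{e}^{*}$ produces an element of multiplicative order $p^{m} - 1$, and $\mathcal{T}_{e} = \{0, 1, \zeta, \dots, \zeta^{p^{m} - 2}\}$ then projects bijectively onto $\overline{\mathcal{R}}_{e}$. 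For the expansion of an arbitrary $r \in \mathcal{R}_{e}$, I would iterate: let $r_{0}$ be the unique member of $\mathcal{T}_{e}$ with $r \equiv r_{0} \pmod \gamma$, write $r - r_{0} = \gamma r^{(1)}$, let $r_{1}$ be the unique member of $\mathcal{T}_{e}$ with $r^{(1)} \equiv r_{1} \pmod \gamma$, and so on; after $e$ iterations the remainder $\gamma^{e} r^{(e)}$ vanishes, giving $r = r_{0} + r_{1}\gamma + \cdots + r_{e-1}\gamma^{e-1}$. Uniqueness at each stage follows from the bijection $\mathcal{T}_{e} \leftrightarrow \overline{\mathcal{R}}_{e}$.

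For part (c), with the $\gamma$-adic expansion in hand, $r = r_{0} + r_{1}\gamma + \cdots$ lies in the maximal ideal $\langle \gamma \rangle$ precisely when $r_{0} = 0$, which by locality is exactly the condition that $r$ fails to be a unit. For the existence of $\alpha_{0}$, the case $r_{0} = 0$ is settled by $\alpha_{0} = 0$; when $r_{0} \neq 0$, the element $r_{0}$ sits inside the cyclic group $\mathcal{T}_{e} \setminus \{0\}$ of order $p^{m} - 1$, and because $\gcd(p^{s}, p^{m} - 1) = 1$ the map $x \mapsto x^{p^{s}}$ is a bijection on this group, producing the desired $\alpha_{0}$ (uniquely). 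The main obstacle throughout is executing the Hensel lift of part (b) carefully enough that uniqueness of the Teichm\"{u}ller representation is transparent; the remainder reduces to routine bookkeeping with the filtration by powers of $\gamma$.
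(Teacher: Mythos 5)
Your proof is correct. The paper offers no proof of this proposition --- it is quoted from Dinh et al.\ and McDonald with a citation only --- and your argument (the $\gamma$-adic filtration with factors $\langle\gamma^i\rangle/\langle\gamma^{i+1}\rangle\cong\overline{\mathcal{R}}_e$ for the cardinality, Hensel/Teichm\"{u}ller lifting of $x^{p^m-1}-1$ for $\zeta$ and the unique expansion, and the bijectivity of $x\mapsto x^{p^s}$ on the group of order $p^m-1$ coprime to $p$) is exactly the standard one found in those sources.
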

\vspace{-1mm}Let $^{-}: \mathcal{R}_{e} \rightarrow \overline{\mathcal{R}}_{e}$ be the natural epimorphism from $\mathcal{R}_{e}$ onto $\mathcal{R}_{e}/\langle\gamma \rangle,$ which is given by $r \mapsto \overline{r}=r +\langle\gamma \rangle$ for each $r \in \mathcal{R}_{e}.$ Note that  $\overline{\mathcal{R}}_{e}=\{ 0,\overline{1}, \overline{\zeta},\cdots,\overline{\zeta}^{p^{m}-2} \}.$ The map $^{-}$ can be further extended to a ring epimorphism from $\mathcal{R}_{e}[x]$ onto $\overline{\mathcal{R}}_{e}[x]$ as follows:   $\sum\limits_{i=0}^{k} b_ix^i \mapsto \sum\limits_{i=0}^{k}\overline{b_i}x^i$ for every $\sum\limits_{i=0}^{k} b_ix^i \in \mathcal{R}_{e}[x].$ A polynomial $f(x) \in \mathcal{R}_{e}[x]$ is said to be basic irreducible over $\mathcal{R}_{e}$ if $\overline{f(x)}$ is irreducible over $\overline{\mathcal{R}}_{e}.$ 
 Two polynomials $k_1(x), k_2(x)\in \mathcal{R}_{e}[x]$ are said to be coprime  if $\langle k_1(x) \rangle+\langle k_2(x)\rangle=\mathcal{R}_{e}[x],$ i.e., if there exist polynomials $a_1(x), a_2(x)\in \mathcal{R}_{e}[x]$  such that $k_1(x)a_1(x)+k_2(x)a_2(x)=1.$ In general, the polynomials $k_1(x), k_2(x),\cdots, k_r(x) \in \mathcal{R}_{e}[x]$ are said to be pairwise coprime  in $\mathcal{R}_{e}[x]$ if for each $i \neq \ell,$  $k_i(x)$ and $k_\ell(x)$  are coprime in $\mathcal{R}_{e}[x].$  
\vspace{-2mm}\begin{lem}\label{coprime}\cite{norton} The following hold.\vspace{-1mm} \begin{enumerate}\vspace{-2mm}\item[(a)]  Let $k_1(x), k_2(x)\in \mathcal{R}_{e}[x].$ Then $k_1(x)$ and $ k_2(x)$ are coprime in $\mathcal{R}_{e}[x]$ if and only if $\overline{ k_1(x)}$ and $\overline{k_2(x)}$ are coprime in $\overline{\mathcal{R}}_{e}[x].$
\vspace{-2mm}\item[(b)] Let $f(x) \in \mathcal{R}_{e}[x]$ be a monic polynomial such that $\overline{f(x)}$ is square-free, i.e., $\overline{f(x)}$ is not divisible by the square of any irreducible polynomial over $\overline{\mathcal{R}}_{e}.$ Then $f(x)$ factors uniquely as a product of monic basic irreducible pairwise coprime polynomials in $\mathcal{R}_{e}[x].$
\end{enumerate}\vspace{-5mm}\end{lem}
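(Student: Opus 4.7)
My proof plan is as follows.

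\textbf{Part (a).} The forward implication is immediate: applying the ring epimorphism $^{-}:\mathcal{R}_e[x]\to\overline{\mathcal{R}}_e[x]$ to a Bezout identity $k_1(x)a_1(x)+k_2(x)a_2(x)=1$ in $\mathcal{R}_e[x]$ yields $\overline{k_1(x)}\,\overline{a_1(x)}+\overline{k_2(x)}\,\overline{a_2(x)}=1$ in $\overline{\mathcal{R}}_e[x].$ For the converse, I would exploit the fact that $\gamma$ is nilpotent of index $e.$ Since $\overline{\mathcal{R}}_e[x]$ is a principal ideal domain, the hypothesis provides $\bar b_1(x),\bar b_2(x)\in\overline{\mathcal{R}}_e[x]$ with $\overline{k_1(x)}\,\bar b_1(x)+\overline{k_2(x)}\,\bar b_2(x)=1.$ Lift to arbitrary preimages $b_1(x),b_2(x)\in\mathcal{R}_e[x],$ so that $k_1(x)b_1(x)+k_2(x)b_2(x)=1+\gamma h(x)$ for some $h(x)\in\mathcal{R}_e[x].$ Because $\gamma^e=0,$ the element $1+\gamma h(x)$ is a unit in $\mathcal{R}_e[x]$ with inverse $\sum_{i=0}^{e-1}(-\gamma h(x))^i;$ multiplying the identity above by this inverse yields a Bezout relation for $k_1(x)$ and $k_2(x)$ in $\mathcal{R}_e[x].$

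\textbf{Part (b).} Here the main tool is Hensel lifting, which is available since $\mathcal{R}_e$ is a (trivially complete) local ring. Since $\overline{f(x)}$ is monic and square-free in $\overline{\mathcal{R}}_e[x],$ it factors uniquely as $\overline{f(x)}=\bar g_1(x)\bar g_2(x)\cdots\bar g_r(x)$ with the $\bar g_i(x)$ monic, irreducible, and pairwise coprime. I would argue by induction on $r.$ For $r=1$ there is nothing to prove. For the inductive step, applying part (a) of this lemma in the direction just established, the pair $(\bar g_1(x),\bar g_2(x)\cdots\bar g_r(x))$ is coprime, so Hensel's lemma produces monic $g_1(x),F(x)\in\mathcal{R}_e[x],$ coprime in $\mathcal{R}_e[x],$ with $f(x)=g_1(x)F(x),$ $\overline{g_1(x)}=\bar g_1(x),$ and $\overline{F(x)}=\bar g_2(x)\cdots\bar g_r(x).$ The polynomial $\overline{F(x)}$ is still square-free, so the inductive hypothesis further factors $F(x)$ into $r-1$ monic basic irreducible pairwise coprime factors, yielding the desired factorization $f(x)=g_1(x)g_2(x)\cdots g_r(x).$ For uniqueness, suppose $f(x)=g_1(x)\cdots g_r(x)=h_1(x)\cdots h_s(x)$ are two such factorizations. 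Reducing modulo $\gamma$ and using uniqueness of factorization in $\overline{\mathcal{R}}_e[x],$ we conclude $r=s$ and (after reindexing) $\overline{g_i(x)}=\overline{h_i(x)}$ for each $i.$ Then $g_i(x)$ and $\prod_{j\neq i}h_j(x)$ are coprime in $\mathcal{R}_e[x]$ by part (a), and comparing the two factorizations through this coprimality, together with monicness and the standard cancellation argument in $\mathcal{R}_e[x],$ forces $g_i(x)=h_i(x).$

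\textbf{Main obstacle.} The nontrivial input is the Hensel lifting step in part (b); the rest reduces to careful bookkeeping with the nilpotency of $\gamma$ and the residue-field factorization. In practice one proves Hensel lifting by constructing the lift $\gamma$-adically: starting with lifts $G_0(x),F_0(x)$ of $\bar g_1(x)$ and $\bar g_2(x)\cdots\bar g_r(x),$ one successively corrects to $G_k(x),F_k(x)$ so that $f(x)\equiv G_k(x)F_k(x)\pmod{\gamma^{k+1}},$ using the Bezout identity modulo $\gamma$ supplied by part (a) to solve the correction equation at each step. Because $\gamma^e=0,$ the process terminates after finitely many steps, producing the required factorization in $\mathcal{R}_e[x].$ This is the one place where the chain-ring hypothesis (specifically the nilpotency of the maximal ideal) is essential.
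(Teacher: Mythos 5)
Your proposal is correct, but note that the paper does not prove this lemma at all: it is quoted verbatim from Norton and S\u{a}l\u{a}gean-Mandache \cite{norton} (with the Hensel-lifting machinery ultimately traceable to McDonald \cite{mcdon}), so there is no in-paper argument to compare against. What you have written is the standard self-contained proof: for (a), push a Bezout identity down through the reduction map, and for the converse lift one up and absorb the error term $1+\gamma h(x)$ using the nilpotency of $\gamma$ to invert it via the finite geometric series; for (b), Hensel-lift the square-free factorization of $\overline{f(x)}$ by $\gamma$-adic successive approximation (which terminates because $\gamma^e=0$) and deduce uniqueness by reducing modulo $\gamma$ and cancelling monic factors, which are non-zerodivisors. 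All of this is sound. One small slip in the write-up of (b): the coprimality of $\bar g_1(x)$ and $\bar g_2(x)\cdots\bar g_r(x)$ in $\overline{\mathcal{R}}_e[x]$ is not an application of part (a) --- it follows directly from unique factorization in the residue field together with the square-freeness hypothesis; part (a) is what you invoke afterwards to certify that the \emph{lifted} factors are coprime in $\mathcal{R}_e[x]$. With that attribution corrected, your argument is a complete proof of the cited lemma.
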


Next to study the algebraic structures of all repeated-root constacyclic codes over $\mathcal{R}_{e},$ we need the following divisibilty result from number theory.
\vspace{-2mm}\begin{prop}\cite{dinh6}\label{bino} Let $p$ be a prime number, and let $b \geq 1,$ $\ell \geq k \geq 0$ be integers. Then the following hold.\vspace{-1mm}
\begin{enumerate}
\vspace{-2mm}\item[(a)] If $p^\ell>k$ and $p^k || b,$ then $p^{\ell-k}||\binom{p^\ell}{b}.$
\vspace{-2mm}\item[(b)] For each integer $i$ satisfying $1\leq i \leq p-1,$ we have $p||\binom{p^\ell}{ip^{\ell-1}}.$ (Throughout this paper, by $p^k|| b,$ we mean $p^k|b $ but $p^{k+1} \nmid b.$)
\end{enumerate} 
\vspace{-4mm}\end{prop}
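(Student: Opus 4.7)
The plan is to compute the $p$-adic valuation of the binomial coefficient $\binom{p^\ell}{b}$ directly. For part (a), I would start from the elementary identity $b\binom{p^\ell}{b}=p^\ell\binom{p^\ell-1}{b-1}$; taking $p$-adic valuations (writing $v_p$ for the $p$-adic valuation) and using the hypothesis $p^k\|b$, this gives
\begin{equation*}
v_p\!\binom{p^\ell}{b}\;=\;\ell - v_p(b) + v_p\!\binom{p^\ell-1}{b-1}\;=\;(\ell-k)+v_p\!\binom{p^\ell-1}{b-1}.
\end{equation*}
The task therefore reduces to showing $\binom{p^\ell-1}{b-1}\not\equiv 0\pmod{p}$ for every $b$ in the valid range $1\le b\le p^\ell$ (for $b>p^\ell$ the binomial vanishes and there is nothing to prove).

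To establish this non-vanishing modulo $p$, I would appeal to Lucas' theorem. The base-$p$ expansion of $p^\ell-1$ is $(p-1,p-1,\ldots,p-1)$ with exactly $\ell$ digits, each equal to $p-1$. Hence for any $j$ with $0\le j\le p^\ell-1$ and base-$p$ digits $(j_{\ell-1},\ldots,j_0)$,
\begin{equation*}
\binom{p^\ell-1}{j}\;\equiv\;\prod_{i=0}^{\ell-1}\binom{p-1}{j_i}\;\equiv\;\prod_{i=0}^{\ell-1}(-1)^{j_i}\pmod{p},
\end{equation*}
which is a unit modulo $p$. Setting $j=b-1$ gives $v_p\binom{p^\ell-1}{b-1}=0$, and hence $v_p\binom{p^\ell}{b}=\ell-k$, which is exactly $p^{\ell-k}\|\binom{p^\ell}{b}$. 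A purely hands-on alternative would be to write $\binom{p^\ell-1}{b-1}=\prod_{r=1}^{b-1}(p^\ell-r)/r$ and observe that $v_p(p^\ell-r)=v_p(r)$ for $1\le r<p^\ell$ (since $p^\ell-r=p^{v_p(r)}(p^{\ell-v_p(r)}-r/p^{v_p(r)})$ with the second factor coprime to $p$), so the valuations cancel factor by factor.

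Part (b) then follows by specialization: writing $b=ip^{\ell-1}$ with $1\le i\le p-1$ forces $\gcd(i,p)=1$, so $p^{\ell-1}\|b$, and (a) applied with $k=\ell-1$ yields $v_p\binom{p^\ell}{ip^{\ell-1}}=\ell-(\ell-1)=1$, i.e.\ $p\|\binom{p^\ell}{ip^{\ell-1}}$. The only substantive step in the whole argument is the non-vanishing of $\binom{p^\ell-1}{b-1}$ modulo $p$; everything else is bookkeeping on $p$-adic valuations. Lucas' theorem packages this cleanly, which is why I would prefer it over Kummer's carry-counting formulation or the direct product computation just mentioned.
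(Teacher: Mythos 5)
Your proof is correct. Note, however, that the paper does not actually prove Proposition \ref{bino}: it is imported verbatim from Dinh et al.\ \cite{dinh6}, so there is no in-paper argument to compare against; your write-up simply supplies the standard proof that the authors chose to cite. The two key steps — the identity $b\binom{p^\ell}{b}=p^\ell\binom{p^\ell-1}{b-1}$ reducing everything to the non-vanishing of $\binom{p^\ell-1}{b-1}$ modulo $p$, and the Lucas (or digit-by-digit cancellation) argument showing $\binom{p^\ell-1}{j}\equiv(-1)^{j_0+\cdots+j_{\ell-1}}\pmod p$ — are both sound, and the specialization $b=ip^{\ell-1}$ with $\gcd(i,p)=1$ correctly yields part (b). One small point worth flagging: the hypothesis ``$p^\ell>k$'' as printed does not by itself force $b\le p^\ell$ (e.g.\ $p=2$, $\ell=2$, $b=6$, $k=1$ satisfies it while $\binom{4}{6}=0$), so your implicit restriction to $1\le b\le p^\ell$ is in fact necessary for the conclusion to hold; the condition is evidently a misprint for $b\le p^\ell$, and in every application in the paper ($b=kp^{s-1}$ or $1\le b\le p^s-1$ with $\ell=s$) that restriction is satisfied, so nothing downstream is affected.
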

The following theorem is  an extension of Theorem 3.4 of Dinh \cite{dinh4} and is useful in the determination of  Hamming distances of repeated-root constacyclic codes over $\mathcal{R}_{e}.$

\vspace{-2mm}\begin{thm}\label{dthm} For $\eta  \in \mathbb{F}_{p^m} \setminus \{0\},$ there exists $\eta_0 \in \mathbb{F}_{p^m}$ satisfying $\eta=\eta_0^{p^s}.$ Further, suppose that the polynomial $x^n-\eta_0$ is irreducible over $\mathbb{F}_{p^m}.$ Let $\mathcal{C}$ be an $\eta$-constacyclic code of length $np^s$ over $\mathbb{F}_{p^m}.$ Then we have $\mathcal{C}= \langle (x^n-\eta_0)^{\upsilon} \rangle ,$ where $0 \leq \upsilon \leq p^s.$ Furthermore, the  Hamming distance $d_H(\mathcal{C})$ of the code $\mathcal{C}$ is given by 
\vspace{-3mm}\begin{equation*}d_H(\mathcal{C})=\left\{\begin{array}{ll}
1 & \text{ if } \upsilon=0;\\
\ell+2 & \text{ if } \ell p^{s-1}+1 \leq \upsilon \leq (\ell +1)p^{s-1} \text{ with } 0 \leq \ell \leq p-2;\\
(i+1)p^k & \text{ if } p^s-p^{s-k}+(i-1)p^{s-k-1}+1\leq \upsilon \leq p^s-p^{s-k}+ip^{s-k-1} \\ & \text{ with } 1\leq i \leq p-1 \text{ and } 1 \leq k \leq s-1;\\
0  & \text{ if } \upsilon=p^s.
\end{array}\right. \vspace{-3mm}\end{equation*}
\end{thm}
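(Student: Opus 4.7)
The plan is to identify the code via a chain-ring argument and then reduce the Hamming distance computation to Theorem 3.4 of Dinh \cite{dinh4} via the substitution $y=x^{n}$. The existence of $\eta_{0}$ is immediate from the fact that the Frobenius $a\mapsto a^{p^{s}}$ is a bijection on $\mathbb{F}_{p^{m}}$. In characteristic $p$ we have $x^{np^{s}}-\eta=x^{np^{s}}-\eta_{0}^{p^{s}}=(x^{n}-\eta_{0})^{p^{s}}$. Since $x^{n}-\eta_{0}$ is irreducible over $\mathbb{F}_{p^{m}}$, the ring $R=\mathbb{F}_{p^{m}}[x]/\langle x^{np^{s}}-\eta\rangle$ is a finite commutative chain ring with maximal ideal $\langle x^{n}-\eta_{0}\rangle$ of nilpotency index $p^{s}$, so Proposition \ref{pr1} forces $\mathcal{C}=\langle(x^{n}-\eta_{0})^{\upsilon}\rangle$ for some $0\le\upsilon\le p^{s}$. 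The extreme cases are now immediate: $\upsilon=0$ gives $\mathcal{C}=R$ and $d_{H}=1$, while $\upsilon=p^{s}$ gives $\mathcal{C}=\{0\}$ and $d_{H}=0$.

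For $1\le\upsilon\le p^{s}-1$, I would use the unique representation $c(x)=\sum_{i=0}^{n-1}x^{i}f_{i}(x^{n})$ of an arbitrary element of $R$, where each $f_{i}(y)\in\mathbb{F}_{p^{m}}[y]$ has degree less than $p^{s}$. Because the exponents $\{i+nj:0\le i<n,\,0\le j<p^{s}\}$ are pairwise distinct, we obtain the additivity $w_{H}(c)=\sum_{i=0}^{n-1}w_{H}(f_{i})$. The key step is to prove the compatibility claim: $c(x)\in\langle(x^{n}-\eta_{0})^{\upsilon}\rangle$ in $R$ if and only if each $f_{i}(y)$ lies in the ideal $\langle(y-\eta_{0})^{\upsilon}\rangle$ of $S=\mathbb{F}_{p^{m}}[y]/\langle(y-\eta_{0})^{p^{s}}\rangle$. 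One direction follows by factoring out $(x^{n}-\eta_{0})^{\upsilon}$; for the other, expand a multiplier as $h(x)=\sum_{i}x^{i}h_{i}(x^{n})$ and reduce each $(x^{n}-\eta_{0})^{\upsilon}h_{i}(x^{n})$ modulo $(x^{n}-\eta_{0})^{p^{s}}=0$ in $R$, which respects the $x^{i}$-coordinate separately and identifies $f_{i}$ with the residue of $(y-\eta_{0})^{\upsilon}h_{i}(y)$ in $S$.

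Once this claim is in hand, $d_{H}(\mathcal{C})$ equals the minimum of $\sum_{i}w_{H}(f_{i})$ over nonzero tuples $(f_{0},\dots,f_{n-1})$ with $f_{i}\in\langle(y-\eta_{0})^{\upsilon}\rangle\subseteq S$; this minimum is realized by placing all the weight on a single index and hence coincides with the Hamming distance of the $\eta_{0}$-constacyclic code of length $p^{s}$ over $\mathbb{F}_{p^{m}}$ generated by $(y-\eta_{0})^{\upsilon}$. Theorem 3.4 of Dinh \cite{dinh4} supplies exactly the piecewise formula in the statement, completing the proof. I expect the main obstacle to be the ``only if'' direction of the compatibility claim: verifying that normal-form reduction modulo $(x^{n}-\eta_{0})^{p^{s}}$ in $R$ commutes with the $n$-term Artin-style expansion over $S$. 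After that the argument is routine bookkeeping plus a direct appeal to the $n=1$ result.
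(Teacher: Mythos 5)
Your argument is correct and complete, but it takes a genuinely different route from the paper, whose entire proof of Theorem \ref{dthm} is the single sentence ``Working in a similar manner as in Theorem 3.4 of Dinh \cite{dinh4}, the desired result follows'' --- i.e.\ the authors redo Dinh's length-$p^s$ weight analysis mutatis mutandis with $x^n-\eta_0$ in place of $x-\lambda_0$. You instead use the cited result as a black box and reduce to it: writing $R=\mathbb{F}_{p^m}[x]/\langle (x^n-\eta_0)^{p^s}\rangle$ as a free module over $S=\mathbb{F}_{p^m}[y]/\langle (y-\eta_0)^{p^s}\rangle$ with basis $1,x,\dots,x^{n-1}$, you observe that the ideal $\langle (x^n-\eta_0)^{\upsilon}\rangle$ decomposes coordinate-wise as $\bigoplus_i x^i\langle (y-\eta_0)^{\upsilon}\rangle$, that Hamming weight is additive across the interleaved coordinates because the exponents $i+nj$ are distinct, and hence that $d_H(\mathcal{C})$ equals the distance of the length-$p^s$ code $\langle (y-\eta_0)^{\upsilon}\rangle\subseteq\mathbb{F}_{p^m}[y]/\langle y^{p^s}-\eta\rangle$, to which Dinh's formula applies verbatim (consistent with the fact that the stated formula is independent of $n$). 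Both key steps check out: the ``only if'' direction of your compatibility claim works exactly as you say, since reduction modulo $(x^n-\eta_0)^{p^s}$ of a polynomial in $x^n$ stays inside $S$ and uniqueness of the normal form identifies the coordinates; and the identification of $\mathcal{C}$ via Proposition \ref{pr1} is the same in both treatments. What your approach buys is a self-contained, checkable proof that isolates precisely why the general-$n$ case carries no new content; what the paper's approach buys is brevity, at the cost of leaving the reader to verify that Dinh's binomial-coefficient estimates survive the substitution. Your version could be inserted as an actual proof without change.
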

 \begin{proof} Working in a similar manner as in Theorem 3.4 of Dinh \cite{dinh4}, the desired result follows.\vspace{-2mm}\end{proof}
From now onwards, throughout this paper, we shall follow the same notations as in Section \ref{prelim},  and    we shall focus our attention on constacyclic codes of length $N=np^s$ over $\mathcal{R}_{e},$ where $p$ is a prime and $s,n$ are positive integers with $\gcd(n,p)=1.$ 
\vspace{-6mm}\section{Repeated-root constacyclic codes over $\mathcal{R}_{e}$ when $e=2$ and their distances}\label{sec3}\vspace{-3mm}
In  this section, we shall assume that $e=2,$ and  we shall determine  all repeated-root constacyclic codes of length $np^s$ over $\mathcal{R}_2$ and their dual codes, where $\mathcal{R}_2$ is a finite commutative chain ring with nilpotency index 2.  We shall also determine the number of codewords in each code and list some isodual constacyclic codes of length $np^s$ over $\mathcal{R}_{2}.$  We shall also determine their Hamming distances, RT distances and RT weight distributions. 

To do this,   by Proposition \ref{teich}(a),  we see  that the characteristic of $\mathcal{R}_2$ is either $p$ or $p^2.$ Moreover, by Proposition \ref{teich}(b), we see that there exists  an element $\zeta \in \mathcal{R}_2$ having the multiplicative order as  $p^{m}-1$ and that $\mathcal{R}_2=\{\alpha+\gamma \beta: \alpha, \beta \in \mathcal{T}_{2}\},$ where $\mathcal{T}_{2}= \{ 0,1, \zeta,\cdots,\zeta^{p^{m}-2} \}$  is the Teichm\"{u}ller set of $\mathcal{R}_2.$   We also recall that for a unit $\lambda \in \mathcal{R}_2,$ a $\lambda$-constacyclic code of length $np^s$ over $\mathcal{R}_2$ is an ideal of the quotient ring $\mathcal{R}_2[x]/\langle x^{np^s}-\lambda\rangle.$ By Proposition \ref{teich}(c),  the unit $\lambda\in \mathcal{R}_{2}$ can be uniquely written as $\lambda=\alpha + \gamma \beta,$ where $\alpha, \beta \in \mathcal{T}_{2}$ and $\alpha \neq 0.$ Further, by Proposition \ref{teich}(c) again, we see that there exists $\alpha_0 \in \mathcal{T}_{2}$ satisfying ${\alpha_0}^{p^s}=\alpha.$  This implies that $x^{np^s}-\lambda= x^{np^s}-{\alpha_0}^{p^s}-\gamma \beta.$ Now by Lemma \ref{coprime}(b), we can write $x^n-\alpha_0=f_1(x)f_2(x)\cdots f_r(x),$ where  $f_1(x),f_2(x),\cdots, f_r(x)$ are monic basic irreducible pairwise coprime polynomials in $\mathcal{R}_2[x].$ Further, by applying Lemma \ref{coprime}(a), we observe  that the polynomials $ f_1(x)^{p^s},f_2(x)^{p^s},\cdots, f_r(x)^{p^s}$ are pairwise coprime in $\mathcal{R}_2[x]$  and that  the polynomials $f_j(x)$ and $F_j(x)=\frac{x^n-\alpha_0}{f_j(x)}$ are coprime in $\mathcal{R}_2[x]$ for $1 \leq j \leq r.$  Moreover, for $1 \leq u \leq r-1,$ by Lemma \ref{coprime}(a) again, we see that the polynomials $f_u(x)^{p^s}$ and $f_{u+1}(x)^{p^s}f_{u+2}(x)^{p^s}\cdots f_r(x)^{p^s}$ are coprime in $\mathcal{R}_2[x],$  which implies that there exist $v_u(x), w_u(x) \in \mathcal{R}_2[x]$ satisfying $\text{deg }w_u(x) < \text{deg }f_u(x)^{p^s}$ and  $v_u(x)f_u(x)^{p^s}+w_u(x) f_{u+1}(x)^{p^s}f_{u+2}(x)^{p^s}\cdots f_r(x)^{p^s}=1.$  Moreover, by Proposition \ref{bino}, we see that $p ||  \binom{p^s}{kp^{s-1}}$ for $1 \leq k \leq p-1$ and that $p^2 | \binom{p^s}{i}$ for $1 \leq i \leq p^s-1$ satisfying $p^{s-1} \nmid i.$ Thus when $\text{char }\mathcal{R}_{2}=p^2,$ we can write $ \binom{p^s}{kp^{s-1}}=pa_k$ with $p \nmid a_k$ for $1\leq k \leq p-1$ and that $ \binom{p^s}{i} =0$ in $\mathcal{R}_2$ for each $i$ satisfying $1 \leq i \leq p^s-1$ and $ p^{s-1} \nmid i.$  Further, when $\text{char } \mathcal{R}_2=p^2,$ we see that $p(=p.1)=\gamma z$ in $\mathcal{R}_2,$ where $z \in \mathcal{T}_{2}\setminus \{0\}.$  In view of this, we factorize the polynomial $x^{np^s}-\lambda$ into pairwise coprime polynomials in $\mathcal{R}_2[x]$ in the following lemma. 
\vspace{-2mm} \begin{lem} \label{fac}We have $x^{np^s}-\lambda= \prod\limits_{j=1}^{r} \left(f_j(x)^{p^s}+\gamma g_j(x)\right),$ where the polynomials  $g_1(x), g_2(x),\cdots,g_r(x) \in \mathcal{R}_2[x]$ satisfy the following for $1 \leq j \leq r:$
  \begin{itemize} \vspace{-2mm}\item $f_j(x)$ and $g_j(x)$ are coprime in $\mathcal{R}_2[x]$  when $\beta \neq 0.$  \vspace{-2mm}\item $g_j(x)=0$ when $\beta =0 $ and $\text{char }\mathcal{R}_2=p.$ \vspace{-2mm}\item$g_j(x)=f_j(x)^{p^{s-1}}M_j(x)$ when  $\beta =0 $ and $\text{char }\mathcal{R}_2=p^2,$ where \vspace{-3mm}\begin{equation*}M_j(x)=F_j(x){^{p^{s-1}}} \big(z\sum\limits_{k=1}^{p-1} a_k(x^n-\alpha_0)^{(k-1)p^{s-1}}\alpha_0^{p^s-kp^{s-1}} \big) w_{j}(x) \prod\limits_{i=1}^{j-1} v_{i}(x)\vspace{-4mm}\end{equation*} is coprime to $f_j(x) $ in $\mathcal{R}_2[x].$ \vspace{-2mm}\end{itemize}
  Moreover, the polynomials $f_1(x)^{p^s}+\gamma g_1(x), f_2(x)^{p^s}+\gamma g_2(x),\cdots,f_r(x)^{p^s}+\gamma g_r(x)$ are pairwise coprime in $\mathcal{R}_2[x].$
  \vspace{-7mm}\end{lem}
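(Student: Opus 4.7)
The plan is to rewrite $x^{np^s}-\lambda$ in the form $(x^n-\alpha_0)^{p^s}+\gamma\Delta(x)$ for an explicit correction $\Delta(x)$, and then peel off the factors $f_j(x)^{p^s}+\gamma g_j(x)$ one at a time by iterating the Bezout identities $v_u f_u^{p^s}+w_u f_{u+1}^{p^s}\cdots f_r^{p^s}=1$ that the excerpt has already set up.

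To identify $\Delta$, I would substitute $y=x^n-\alpha_0$, so that $x^{np^s}-\alpha_0^{p^s}=\sum_{k=1}^{p^s}\binom{p^s}{k}y^{k}\alpha_0^{p^s-k}$. Proposition \ref{bino} then collapses the sum: when $\mathrm{char}\,\mathcal{R}_2=p$, every middle binomial vanishes and only $y^{p^s}=(x^n-\alpha_0)^{p^s}$ survives; when $\mathrm{char}\,\mathcal{R}_2=p^2$, exactly the terms $k=p^s$ and $k=jp^{s-1}$ ($1\le j\le p-1$) survive, with $\binom{p^s}{jp^{s-1}}=pa_j=\gamma z a_j$. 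Subtracting $\gamma\beta$ then gives $\Delta=-\beta$ in the characteristic $p$ case, and $\Delta=(x^n-\alpha_0)^{p^{s-1}}G(x)-\beta$ in the characteristic $p^2$ case, where $G(x)=z\sum_{k=1}^{p-1}a_k(x^n-\alpha_0)^{(k-1)p^{s-1}}\alpha_0^{p^s-kp^{s-1}}$ is precisely the factor appearing in the statement.

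Since $\gamma^{2}=0$, the desired identity is equivalent to the single linear condition $\sum_{j=1}^{r} g_j\prod_{k\neq j}f_k^{p^s}=\Delta$. I would construct the $g_j$ recursively: after peeling off $f_1^{p^s}+\gamma g_1$ and writing the cofactor as $\prod_{j\ge 2}f_j^{p^s}+\gamma h_1$, matching $\gamma$-coefficients gives $f_1^{p^s}h_1+g_1\prod_{j>1}f_j^{p^s}=\Delta$, which is solved by multiplying the Bezout identity at $u=1$ by $\Delta$ to obtain $g_1=w_1\Delta$ and $h_1=v_1\Delta$. Iterating this peel-off through $u=2,\ldots,r-1$ yields $g_j=w_j\Delta\prod_{i<j}v_i$ for $1\le j\le r-1$, and $g_r=\Delta\prod_{i<r}v_i$. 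Substituting the three explicit forms of $\Delta$ immediately reproduces the three bulleted expressions (using $\prod_i f_i^{p^{s-1}}=f_j^{p^{s-1}}F_j^{p^{s-1}}$ to split off the factor $f_j^{p^{s-1}}$ in the characteristic $p^2$ case).

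The coprimality claims reduce to units modulo $f_j$: the Bezout relations give $w_j\prod_{i>j}f_i^{p^s}\equiv 1\pmod{f_j^{p^s}}$ and $v_i f_i^{p^s}\equiv 1\pmod{f_j^{p^s}}$ for $i<j$, so $w_j$ and every $v_i$ with $i<j$ is a unit mod $f_j$. When $\beta\neq 0$, reducing $\Delta$ modulo $f_j$ (which divides $x^n-\alpha_0$) leaves the unit $-\beta$; when $\beta=0$ and $\mathrm{char}\,\mathcal{R}_2=p^{2}$, reducing $G$ modulo $f_j$ kills every term with $k\ge 2$ and leaves the unit $za_1\alpha_0^{p^s-p^{s-1}}$, and $F_j^{p^{s-1}}$ is a unit mod $f_j$ by pairwise coprimality of the $f_i$. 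Finally, the factors $f_j^{p^s}+\gamma g_j$ are pairwise coprime by Lemma \ref{coprime}(a), because their reductions $\bar f_j^{p^s}$ are pairwise coprime in $\overline{\mathcal{R}}_2[x]$ (the polynomial $x^n-\bar\alpha_0$ is separable since $\gcd(n,p)=1$, so the $\bar f_j$ are distinct monic irreducibles). The delicate step is the binomial computation producing $\Delta$ in the $\mathrm{char}\,\mathcal{R}_2=p^2$ case; once that and the precise form of $G$ are pinned down, the rest is a routine CRT-style lift.
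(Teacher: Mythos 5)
Your proposal is correct and follows essentially the same route as the paper: expand $x^{np^s}-\alpha_0^{p^s}$ binomially in $y=x^n-\alpha_0$, collapse the middle terms via Proposition \ref{bino} to get $(x^n-\alpha_0)^{p^s}+\gamma\Delta(x)$, and then peel off the factors one at a time using the Bezout identities $v_u(x)f_u(x)^{p^s}+w_u(x)f_{u+1}(x)^{p^s}\cdots f_r(x)^{p^s}=1$, obtaining exactly the same $g_j(x)=w_j(x)\Delta(x)\prod_{i<j}v_i(x)$. Your explicit verification of the coprimality of $g_j(x)$ (resp.\ $M_j(x)$) with $f_j(x)$ by reduction modulo $f_j(x)$ fills in a step the paper dismisses as immediate, but it is not a different argument.
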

\begin{proof} To prove the result,  we shall distinguish the following two cases: (i) $\text{char }\mathcal{R}_2=p$ and (ii) $\text{char }\mathcal{R}_2=p^2.$
\begin{description}\vspace{-2mm}\item[(i)] First suppose that $\text{char } \mathcal{R}_2=p.$ Here by Proposition \ref{bino}, we see that $\binom{p^s}{i}=0$ in $\mathcal{R}_2$ for $1 \leq i \leq p^s-1.$ Next we observe that $x^{np^s}-\lambda=x^{np^s}-\alpha-\gamma\beta=(x^n-\alpha_0)^{p^s}-\gamma  \beta=f_1(x)^{p^s}f_2(x)^{p^s}\cdots f_r(x)^{p^s}-\gamma \beta .$ As $v_1(x) f_1(x)^{p^s}+w_1(x) f_2(x)^{p^s}f_3(x)^{p^s}\cdots f_r(x)^{p^s}=1,$ we can write \vspace{-2mm}\begin{equation*}x^{np^s}-\lambda=\left\{f_1(x)^{p^s}-\gamma \beta w_1(x)  \right\}  \left\{f_2(x)^{p^s}f_3(x)^{p^s}\cdots f_r(x)^{p^s}-\gamma \beta v_1(x) \right\}.\vspace{-2mm}\end{equation*}  Further, since $v_2(x) f_2(x)^{p^s}+w_2(x) f_3(x)^{p^s}f_4(x)^{p^s}\cdots f_r(x)^{p^s}=1,$ we get $f_2(x)^{p^s}f_3(x)^{p^s}\cdots f_r(x)^{p^s}  -\gamma \beta v_1(x)  =\left\{f_2(x)^{p^s}-\gamma \beta v_1(x)w_2(x) \right\} \left\{ f_3(x)^{p^s}f_4(x)^{p^s}\cdots f_r(x)^{p^s} -\gamma \beta v_1(x)v_2(x)  \right\}.$ Proceeding like this, we see that \vspace{-3mm}\begin{equation*}x^{np^s}-\lambda=\prod\limits_{j=1}^{r}\Big(f_j(x)^{p^s}+\gamma g_j(x)\Big),\vspace{-5mm}\end{equation*} where  $g_1(x)=-\beta$ when $r=1;$ and  $g_{j}(x)=-\beta w_{j}(x) \prod\limits_{i=1}^{j-1} v_{i}(x)$ for $1 \leq j \leq r-1$ and $g_{r}(x)=-\beta \prod\limits_{i=1}^{r-1} v_{i}(x)$ when $r \geq 2.$

\vspace{-2mm}\item[(ii)] Next suppose that  $\text{char }\mathcal{R}_2=p^2.$ Here we have  $ \binom{p^s}{kp^{s-1}}=pa_k$ with $p \nmid a_k$ for $1\leq k \leq p-1,$ $ \binom{p^s}{i} =0$ in $\mathcal{R}_2$ for each $i$ satisfying $1 \leq i \leq p^s-1$ and $ p^{s-1} \nmid i,$ and  $p=\gamma z,$  where $z \in \mathcal{T}_{2}\setminus \{0\}.$
 Using this, we see that  $x^{np^s}-\lambda=(x^n-\alpha_0)^{p^s} +\sum\limits_{k=1}^{p-1} \binom{p^s}{kp^{s-1}}(x^n-\alpha_0)^{kp^{s-1}}{\alpha_0}^{p^s-kp^{s-1}} -\gamma  \beta=f_1(x)^{p^s}f_2(x)^{p^s}\cdots f_r(x)^{p^s}-\gamma \big( \beta -z\sum\limits_{k=1}^{p-1} a_k(x^n-\alpha_0)^{kp^{s-1}}{\alpha_0}^{p^s-kp^{s-1}} \big) .$ Now working in a similar manner as in case (i), we get \vspace{-3mm}\begin{equation*}x^{np^s}-\alpha-\gamma \beta =\prod\limits_{j=1}^{r}\big(f_j(x)^{p^s}+\gamma g_j(x)\big),\vspace{-3mm}\end{equation*} where  $g_1(x)=-\big( \beta -z\sum\limits_{k=1}^{p-1} a_k(x^n-\alpha_0)^{kp^{s-1}}{\alpha_0}^{p^s-kp^{s-1}} \big)$ when $r=1;$ and  $g_{j}(x)=-\big( \beta -z\sum\limits_{k=1}^{p-1} a_k(x^n-\alpha_0)^{kp^{s-1}}{\alpha_0}^{p^s-kp^{s-1}} \big) w_{j}(x) \prod\limits_{i=1}^{j-1} v_{i}(x)$ for $1 \leq j \leq r-1$ and $g_{r}(x)=-\big( \beta -z\sum\limits_{k=1}^{p-1} a_k(x^n-\alpha_0)^{kp^{s-1}}{\alpha_0}^{p^s-kp^{s-1}} \big)\\ \prod\limits_{i=1}^{r-1} v_{i}(x)$  when $r \geq 2.$\vspace{-2mm}\end{description}
From this and by applying Lemma \ref{coprime}(a), the desired result follows immediately.\vspace{-2mm}\end{proof} 
Next for $1 \leq j \leq r,$ let us define  $k_j(x)=f_j(x)^{p^s}+\gamma g_j(x).$  Here we observe that if $\text{deg } f_j(x)=d_j,$ then $\text{deg }k_j(x)=d_jp^s$ for each $j.$ By Lemma \ref{fac}, we see that  $x^{np^s}-\lambda=x^{np^s}-\alpha-\gamma \beta =\prod\limits_{j=1}^{r}k_j(x)$ is a factorization of  $x^{np^s}-\lambda$ into monic pairwise coprime polynomials in $\mathcal{R}_2[x].$ Now by applying the Chinese Remainder Theorem, we get
 \vspace{-3mm}\begin{equation*}\label{decomk} \mathcal{R}_{\alpha,\beta}=\mathcal{R}_2[x]/\langle x^{np^s}-\alpha - \gamma \beta \rangle\simeq \bigoplus\limits_{j=1}^{r} \mathcal{R}_2[x]/\langle k_j(x)\rangle.\vspace{-2mm}\end{equation*} From this point on, let $\mathcal{K}_{j}=\mathcal{R}_2[x]/\langle k_j(x)\rangle$ for  $1 \leq j \leq r.$ Then we have the following:
\vspace{-2mm}\begin{prop}\label{p1}\begin{enumerate}\item[(a)] If $\mathcal{C}$ is an $(\alpha+\gamma\beta)$-constacyclic code of length $np^s$ over $\mathcal{R}_2$ (i.e.,  an ideal of the ring $\mathcal{R}_{\alpha,\beta}$), then $\mathcal{C}=\mathcal{C}_1\oplus \mathcal{C}_2\oplus \cdots \oplus \mathcal{C}_r,$ where $\mathcal{C}_j$ is an ideal of $\mathcal{K}_{j}$ for $1 \leq j \leq r.$
\vspace{-2mm}\item[(b)] Let $I_j$ be an ideal of $\mathcal{K}_{j}$ for $1 \leq j \leq r.$ Then $I=I_1 \oplus I_2\oplus \cdots \oplus I_r$ is an ideal of $\mathcal{R}_{\alpha,\beta},$ (i.e., $I$ is an $(\alpha+\gamma \beta )$-constacyclic code of length $np^s$ over $\mathcal{R}_2$).  Furthermore, we have $|I|=|I_1||I_2|\cdots |I_r|.$\end{enumerate}\vspace{-2mm}\end{prop}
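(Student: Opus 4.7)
The plan is to deduce the proposition directly from the Chinese Remainder Theorem isomorphism
\[
\Phi:\mathcal{R}_{\alpha,\beta}\longrightarrow \bigoplus_{j=1}^{r}\mathcal{K}_{j}
\]
already established in the excerpt, by invoking the standard fact that ideals of a finite direct product of rings are precisely the direct sums of ideals of the component rings. First I would fix the isomorphism $\Phi$ explicitly as $\Phi(f(x)+\langle x^{np^s}-\alpha-\gamma\beta\rangle)=(f(x)+\langle k_{1}(x)\rangle,\ldots,f(x)+\langle k_{r}(x)\rangle)$, which is a well-defined ring isomorphism since Lemma \ref{fac} guarantees that $k_{1}(x),\ldots,k_{r}(x)$ are monic pairwise coprime with $\prod_{j}k_{j}(x)=x^{np^s}-\alpha-\gamma\beta$.

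For part (a), given an ideal $\mathcal{C}\subseteq \mathcal{R}_{\alpha,\beta}$, its image $\Phi(\mathcal{C})$ is an ideal of $\bigoplus_{j}\mathcal{K}_{j}$. I would then invoke the elementary lemma that in a finite direct sum of commutative rings with identity, every ideal has the form $I_{1}\oplus\cdots\oplus I_{r}$ with $I_{j}$ an ideal of the $j$-th summand: one defines $\mathcal{C}_{j}$ to be the projection of $\Phi(\mathcal{C})$ onto $\mathcal{K}_{j}$, verifies it is an ideal, and checks the reverse inclusion using the idempotents $\varepsilon_{j}=(0,\ldots,0,1,0,\ldots,0)\in\bigoplus_{j}\mathcal{K}_{j}$, which satisfy $\varepsilon_{j}\Phi(\mathcal{C})\subseteq\Phi(\mathcal{C})$. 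Pulling back through $\Phi^{-1}$ then realises each $\mathcal{C}_{j}$ as an ideal of $\mathcal{R}_{\alpha,\beta}$ that is naturally identified with an ideal of $\mathcal{K}_{j}$, giving the claimed internal direct sum $\mathcal{C}=\mathcal{C}_{1}\oplus\cdots\oplus\mathcal{C}_{r}$.

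For part (b), conversely, given ideals $I_{j}\subseteq \mathcal{K}_{j}$ the external direct sum $I_{1}\oplus\cdots\oplus I_{r}$ is plainly an ideal of $\bigoplus_{j}\mathcal{K}_{j}$, and its preimage under $\Phi$ is therefore an ideal of $\mathcal{R}_{\alpha,\beta}$, i.e., an $(\alpha+\gamma\beta)$-constacyclic code of length $np^{s}$ over $\mathcal{R}_{2}$. The cardinality formula $|I|=|I_{1}||I_{2}|\cdots|I_{r}|$ is immediate from the bijectivity of $\Phi$ and the definition of direct sum.

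No genuine obstacle is expected here; this proposition is really the translation of the ring-theoretic CRT into the language of constacyclic codes, and the only point requiring any care is the clean correspondence between ideals of a product ring and tuples of ideals of the factors, which is handled via the orthogonal idempotents $\varepsilon_{1},\ldots,\varepsilon_{r}$. The main value of stating it explicitly is that it reduces the subsequent classification of all repeated-root constacyclic codes to the local problem of describing ideals of each $\mathcal{K}_{j}$ separately.
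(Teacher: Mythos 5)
Your proposal is correct and follows exactly the route the paper intends: the paper establishes the Chinese Remainder Theorem isomorphism $\mathcal{R}_{\alpha,\beta}\simeq \bigoplus_{j=1}^{r}\mathcal{K}_{j}$ immediately before the proposition and then declares the proof straightforward, which is precisely the transfer of ideals through that isomorphism via orthogonal idempotents that you spell out. No gaps; you have simply supplied the routine details the authors omit.
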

\noindent\textbf{Proof.} Its proof is straightforward.  $\hfill\Box$

Now let $\mathcal{C}$ be an $(\alpha+\gamma\beta)$-constacyclic code of length $np^s$ over $\mathcal{R}_2.$ Then its dual code $\mathcal{C}^{\perp}$ is an $(\alpha+\gamma \beta)^{-1}$-constacyclic code of length $np^s$ over $\mathcal{R}_2.$  Further, we see that $(\alpha+\gamma \beta)^{-1}=\alpha^{-1}-\gamma \beta \alpha^{-2},$ which implies that $\mathcal{C}^{\perp}$ is an ideal of the ring $\widehat{\mathcal{R}_{\alpha,\beta}}=\mathcal{R}_2[x]/\langle x^{np^s}-(\alpha + \gamma \beta )^{-1}  \rangle= \mathcal{R}_{\alpha^{-1},-\beta\alpha^{-2}}.$ To determine $\mathcal{C}^{\perp},$ we see that $x^{np^s}-(\alpha+\gamma \beta )^{-1}=-(\alpha+\gamma \beta )^{-1}k_1^*(x)k_2^*(x)\cdots k_r^*(x).$  By applying Chinese Remainder Theorem again, we obtain $\widehat{\mathcal{R}_{\alpha,\beta}}\simeq \bigoplus\limits_{j=1}^{r}\widehat{\mathcal{K}_{j}},$ where $\widehat{\mathcal{K}_{j}}=\mathcal{R}_2[x]/\langle k_j^*(x)\rangle$ for $1 \leq j \leq r.$ Now we make the following observation.
\vspace{-1mm}\begin{prop}\label{p2} Let $\mathcal{C}$ be an $(\alpha+\gamma \beta)$-constacyclic code of length $np^s$ over $\mathcal{R}_2,$ i.e.,  an ideal of the ring $\mathcal{R}_{\alpha,\beta}.$ If $\mathcal{C}=\mathcal{C}_1\oplus \mathcal{C}_2\oplus \cdots \oplus \mathcal{C}_r$ with $\mathcal{C}_j$ an ideal of the quotient ring $\mathcal{K}_{j}$ for each $j,$  then the dual code $\mathcal{C}^{\perp}$ of $\mathcal{C}$ is given by $\mathcal{C}^{\perp}=\mathcal{C}_1^{\perp}\oplus \mathcal{C}_2^{\perp}\oplus \cdots \oplus \mathcal{C}_r^{\perp},$ where $\mathcal{C}_j^{\perp} =\{a_j(x) \in \widehat{\mathcal{K}_{j}}: a_j(x) c_j^*(x)=0 \text{ in }\widehat{\mathcal{K}_{j}} \text{ for all }c_j(x) \in \mathcal{C}_{j}\}$ is the orthogonal complement of  $\mathcal{C}_j$  for each $j.$ Moreover, $\mathcal{C}_j^{\perp}$ is an ideal of the quotient ring $\widehat{\mathcal{K}_{j}}=\mathcal{R}_2[x]/\langle k_j^*(x)\rangle$ for each $j.$\vspace{-2mm}\end{prop}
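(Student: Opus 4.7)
The plan is to apply Lemma \ref{pr2}, which gives $\mathcal{C}^{\perp}=\text{ann}(\mathcal{C})^{*}$, and then push both the annihilator and the reciprocal-polynomial operation $*$ through the Chinese Remainder decompositions already established before the statement.

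First I would compute $\text{ann}(\mathcal{C})$ using the isomorphism $\mathcal{R}_{\alpha,\beta}\simeq \bigoplus_{j=1}^{r}\mathcal{K}_{j}$. Writing an arbitrary element of $\mathcal{R}_{\alpha,\beta}$ in coordinates as $(a_1(x),\ldots,a_r(x))$ with $a_j(x)\in\mathcal{K}_j$, orthogonality of the summands shows that $(a_1,\ldots,a_r)$ annihilates every $(c_1,\ldots,c_r)\in\bigoplus_j\mathcal{C}_j$ if and only if $a_j(x)c_j(x)=0$ in $\mathcal{K}_j$ for every $j$. Hence
\begin{equation*}
\text{ann}(\mathcal{C})\;=\;\bigoplus_{j=1}^{r}\text{ann}_{\mathcal{K}_j}(\mathcal{C}_j),
\end{equation*}
and each $\text{ann}_{\mathcal{K}_j}(\mathcal{C}_j)$ is an ideal of $\mathcal{K}_j$.

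Next I would verify that applying $*$ preserves this direct-sum decomposition, i.e.\ $\text{ann}(\mathcal{C})^{*}=\bigoplus_{j=1}^{r}\text{ann}_{\mathcal{K}_j}(\mathcal{C}_j)^{*}$ inside the CRT decomposition $\widehat{\mathcal{R}_{\alpha,\beta}}\simeq \bigoplus_{j=1}^{r}\widehat{\mathcal{K}_{j}}$. The subtle point is that $f\mapsto f^{*}$ is not a ring homomorphism. My workaround is to identify it, at the ideal level, with the genuine ring isomorphism $\sigma:\mathcal{R}_{\alpha,\beta}\to \widehat{\mathcal{R}_{\alpha,\beta}}$ determined by $\sigma(x)=x^{-1}$. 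For any polynomial representative $f$ of degree $d<np^{s}$, a direct computation yields $f^{*}(x)=x^{d}\,\sigma(f)(x)$ with $x^{d}$ a unit in $\widehat{\mathcal{R}_{\alpha,\beta}}$, so that $I^{*}=\sigma(I)$ for every ideal $I$ of $\mathcal{R}_{\alpha,\beta}$. Since $\sigma(k_j(x))$ and $k_j^{*}(x)$ generate the same ideal of $\widehat{\mathcal{R}_{\alpha,\beta}}$, the isomorphism $\sigma$ carries the primitive idempotent of $\mathcal{K}_j$ to that of $\widehat{\mathcal{K}_j}$, and hence sends each summand $\mathcal{K}_j$ onto $\widehat{\mathcal{K}_j}$; applied to the annihilator above, this gives $\mathcal{C}^{\perp}=\bigoplus_{j=1}^{r}\text{ann}_{\mathcal{K}_j}(\mathcal{C}_j)^{*}$ with each summand an ideal of $\widehat{\mathcal{K}_j}$.

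To conclude I would unwind definitions to check $\text{ann}_{\mathcal{K}_j}(\mathcal{C}_j)^{*}=\mathcal{C}_j^{\perp}$ as stated: an element $a_j(x)\in\widehat{\mathcal{K}_j}$ lies in $\text{ann}_{\mathcal{K}_j}(\mathcal{C}_j)^{*}$ iff $a_j(x)=b_j(x)^{*}$ for some $b_j(x)\in\mathcal{K}_j$ annihilating $\mathcal{C}_j$, and using $(b_jc_j)^{*}=b_j^{*}c_j^{*}$ up to unit factors of the form $x^{k}$, this is equivalent to $a_j(x)c_j^{*}(x)=0$ in $\widehat{\mathcal{K}_j}$ for every $c_j(x)\in\mathcal{C}_j$. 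The main obstacle is the second step, namely the compatibility of the non-ring-theoretic operation $*$ with CRT; this is resolved by replacing $*$ with the bona fide ring isomorphism $\sigma$ up to multiplication by units, so that the ideal-level content of $*$ is governed by a ring isomorphism which necessarily carries primitive idempotents to primitive idempotents.
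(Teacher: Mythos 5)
Your proposal is correct, and it follows the route the paper implicitly has in mind (the paper dismisses this statement with ``Proof is trivial''): reduce to $\mathcal{C}^{\perp}=\text{ann}(\mathcal{C})^{*}$ via Lemma \ref{pr2}, observe that annihilators decompose componentwise under the CRT isomorphism, and handle the reciprocal operation by realizing it, up to multiplication by the unit $x$, as the ring isomorphism $x\mapsto x^{-1}$ from $\mathcal{R}_{\alpha,\beta}$ onto $\widehat{\mathcal{R}_{\alpha,\beta}}$, which matches the factor $\langle k_j(x)\rangle$ with $\langle k_j^{*}(x)\rangle$ and hence each summand $\mathcal{K}_j$ with $\widehat{\mathcal{K}_j}$. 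The only point worth making explicit is that the identification $I^{*}=\sigma(I)$ uses that $I^{*}$ is itself an ideal (as recorded in Section \ref{prelim}), since $f^{*}=x^{\deg f}\sigma(f)$ only shows the two sets agree up to unit multiples; with that remark your argument is complete.
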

\vspace{-1mm}\noindent\textbf{Proof.}   Proof is trivial. $\hfill \Box$

In view of Propositions \ref{p1} and \ref{p2}, we see that to determine all $(\alpha+\gamma\beta)$-constacyclic codes of length $np^s$ over $\mathcal{R}_2,$ their sizes  and their dual codes, we need to determine all ideals of the quotient ring $\mathcal{K}_{j},$ their cardinalities and their orthogonal complements in the quotient ring $\widehat{\mathcal{K}_{j}}$ for $1 \leq j \leq r.$  For this, throughout this paper, let $1 \leq j \leq r$ be fixed. From now on, we shall represent elements of the quotient rings $\mathcal{K}_{j}$ and $\widehat{\mathcal{K}_{j}}$ (resp. $\mathcal{R}_2[x]/\langle f_j(x)^{p^s}\rangle$ and $\overline{\mathcal{R}_2}[x]/\langle\overline{ f_j(x)}^{p^s}\rangle$) by their representatives in $\mathcal{R}_2[x]$ (resp. $\overline{\mathcal{R}_2}[x]$) of degree less than $d_jp^s$ (resp. $d_jp^s$), and we shall perform their addition and multiplication modulo $k_j(x)$ and $k_j^*(x)$ (resp.  $f_j(x)^{p^s}$ and $\overline{f_j(x)}^{p^s}$), respectively. Now to determine all ideals of the quotient ring $\mathcal{K}_{j},$ their orthogonal complements and their sizes,  we  shall first prove the following lemma:
\vspace{-2mm}\begin{lem} \label{nilred} Let $1 \leq j \leq r$ be fixed. In the ring $\mathcal{K}_{j},$ we have the following:\begin{enumerate}\vspace{-2mm}\item[(a)] Any non-zero polynomial $a(x) \in \mathcal{R}_2[x]$ that is coprime to $f_j(x)$ is a unit in $\mathcal{K}_{j}.$ As a consequence, any non-zero polynomial $b(x)\in \mathcal{R}_2[x]$ satisfying $\text{deg }b(x) < d_j$ and  $\overline{ b(x)} \neq 0$ is a unit in $\mathcal{K}_{j}.$
\vspace{-2mm}\item[(b)]$f_j(x)$ is nilpotent in $\mathcal{K}_{j}$ and  $\langle f_j(x)^{p^s}\rangle=\left\{\begin{array}{ll}\langle\gamma \rangle & \text{if }\beta \neq 0;\\ \{0 \} & \text{if }\beta=0 \text{ and } \text{char }\mathcal{R}_2=p; \\ \langle\gamma f_j(x)^{p^{s-1}} \rangle  & \text{if } \beta=0 \text{ and } \text{char }\mathcal{R}_2=p^2.\end{array}\right.$ 
\vspace{-2mm}\item [(c)] The nilpotency index $\mathfrak{N}$ of $f_j(x)$ is given by \vspace{-4mm}$$\mathfrak{N}= \left\{\begin{array}{ll} 2p^s & \text{if }\beta \neq 0;\\ p^s & \text{if }\beta=0 \text{ and } \text{char }\mathcal{R}_2=p; \\ 2p^s-p^{s-1}  & \text{if } \beta=0 \text{ and } \text{char }\mathcal{R}_2=p^2.\end{array}\right.$$
\vspace{-2mm}\end{enumerate}\vspace{-9mm}\end{lem}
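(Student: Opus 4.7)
For part (a), my plan is to pull the coprimality down to the residue field and back up. If $a(x)\in\mathcal{R}_2[x]$ is coprime to $f_j(x)$ in $\mathcal{R}_2[x]$, then by Lemma \ref{coprime}(a) $\overline{a(x)}$ is coprime to $\overline{f_j(x)}$ in $\overline{\mathcal{R}_2}[x]$; since $\overline{f_j(x)}$ is irreducible this forces $\overline{a(x)}$ to be coprime to $\overline{f_j(x)}^{p^s}=\overline{k_j(x)}$, and by Lemma \ref{coprime}(a) again $a(x)$ is coprime to $k_j(x)$ in $\mathcal{R}_2[x]$. A B\'ezout identity $a(x)u(x)+k_j(x)v(x)=1$ then shows $a(x)$ is a unit modulo $k_j(x)$. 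The consequence for $b(x)$ with $\deg b(x)<d_j$ and $\overline{b(x)}\neq 0$ follows because $\overline{b(x)}$ is a nonzero polynomial of degree strictly less than $\deg\overline{f_j(x)}=d_j$, so it is coprime to the irreducible $\overline{f_j(x)}$.

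For part (b), the key identity is that in $\mathcal{K}_j$ one has $f_j(x)^{p^s}=-\gamma g_j(x)$, by the very definition $k_j(x)=f_j(x)^{p^s}+\gamma g_j(x)=0$. In the case $\beta\neq 0$, Lemma \ref{fac} asserts $g_j(x)$ is coprime to $f_j(x)$, so by part (a) $g_j(x)$ is a unit in $\mathcal{K}_j$; hence $\langle f_j(x)^{p^s}\rangle=\langle\gamma g_j(x)\rangle=\langle\gamma\rangle$. In the case $\beta=0$ and $\mathrm{char}\,\mathcal{R}_2=p$, Lemma \ref{fac} gives $g_j(x)=0$, so $f_j(x)^{p^s}=0$ and $\langle f_j(x)^{p^s}\rangle=\{0\}$. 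In the case $\beta=0$ and $\mathrm{char}\,\mathcal{R}_2=p^2$, Lemma \ref{fac} gives $g_j(x)=f_j(x)^{p^{s-1}}M_j(x)$ with $M_j(x)$ coprime to $f_j(x)$; by part (a) $M_j(x)$ is a unit in $\mathcal{K}_j$, so $\langle f_j(x)^{p^s}\rangle=\langle\gamma f_j(x)^{p^{s-1}}M_j(x)\rangle=\langle\gamma f_j(x)^{p^{s-1}}\rangle$.

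For part (c), the upper bounds are immediate from (b) and $\gamma^2=0$: in the $\beta\neq 0$ case $f_j(x)^{2p^s}=\gamma^2 g_j(x)^2=0$; in the $\beta=0$, $\mathrm{char}\,\mathcal{R}_2=p$ case $f_j(x)^{p^s}=0$; in the $\beta=0$, $\mathrm{char}\,\mathcal{R}_2=p^2$ case $f_j(x)^{2p^s-p^{s-1}}=f_j(x)^{p^s-p^{s-1}}\!\cdot\!(-\gamma f_j(x)^{p^{s-1}}M_j(x))=-\gamma f_j(x)^{p^s}M_j(x)=\gamma^2 f_j(x)^{p^{s-1}}M_j(x)^2=0$. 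For the matching lower bounds, I would assume by contradiction that $f_j(x)^{\mathfrak{N}-1}=0$ in $\mathcal{K}_j$, which means $k_j(x)\mid f_j(x)^{\mathfrak{N}-1}$ in $\mathcal{R}_2[x]$, and write $f_j(x)^{\mathfrak{N}-1}=q(x)k_j(x)$. Reducing modulo $\gamma$ (using $\overline{\mathcal{R}_2}[x]$ is a UFD and $\overline{f_j(x)}$ is irreducible) pins down $\overline{q(x)}$, so $q(x)$ has a specific form up to an additive $\gamma h(x)$; substituting back and using $\gamma^2=0$ reduces the equation to the vanishing of $\gamma$ times an explicit polynomial. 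The annihilator of $\gamma$ in $\mathcal{R}_2$ being $\langle\gamma\rangle$ forces a divisibility of the form $\overline{f_j(x)}\mid\overline{g_j(x)}$ in the $\beta\neq 0$ case or $\overline{f_j(x)}\mid\overline{M_j(x)}$ in the $\beta=0$, $\mathrm{char}\,\mathcal{R}_2=p^2$ case, both contradicting the coprimality statements in Lemma \ref{fac}. The $\beta=0$, $\mathrm{char}\,\mathcal{R}_2=p$ case is even easier: $f_j(x)^{p^s-1}=0$ would mean $k_j(x)=f_j(x)^{p^s}\mid f_j(x)^{p^s-1}$, impossible by degrees.

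The main obstacle I foresee is the careful bookkeeping in the lower-bound argument of (c) in the mixed characteristic case, where one must track how the $\gamma f_j(x)^{p^{s-1}}M_j(x)$ term interacts with itself when multiplied by a yet-unknown quotient; the trick is to exploit $\gamma^2=0$ aggressively to kill all but one cross term before reducing mod $\gamma$, so that the coprimality of $M_j(x)$ with $f_j(x)$ granted by Lemma \ref{fac} can be applied cleanly.
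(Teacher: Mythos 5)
Your proposal is correct and follows essentially the same route as the paper: reduce coprimality to the residue field via Lemma \ref{coprime}(a) and lift back to get units in $\mathcal{K}_j$ for (a), and read off $\langle f_j(x)^{p^s}\rangle=\langle -\gamma g_j(x)\rangle$ from $k_j(x)=0$ together with the three descriptions of $g_j(x)$ in Lemma \ref{fac} for (b) and (c). The only cosmetic differences are that you establish invertibility via a B\'ezout identity with $k_j(x)$ directly rather than with $f_j(x)^{p^s}$ plus a $\gamma$-correction, and your lower bound in (c) is a more explicit division-and-reduce-mod-$\gamma$ contradiction where the paper simply asserts the non-vanishing of $f_j(x)^{p^s-1}$ (resp. $\gamma f_j(x)^{p^s-1}$), which follows from a degree count; both are valid.
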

 \vspace{-2mm}\begin{proof}\begin{enumerate}\item[{\it (a)}] As $a(x)\in \mathcal{R}_2[x]$ is coprime to $f_j(x)$ and $f_j(x)$ is a basic irreducible polynomial in $\mathcal{R}_2[x],$ by Lemma \ref{coprime}(a), we see that the polynomials  $f_j(x)^{p^s}$ and $a(x)$ are coprime in $\mathcal{R}_2[x],$ which implies that there exist polynomials $q(x),r(x) \in \mathcal{R}_2[x]$ such that $q(x)a(x)+r(x)f_j(x)^{p^s}=1$ in $\mathcal{R}_2[x].$ This gives $q(x)a(x)=1+\gamma r(x)g_j(x)$ in $\mathcal{K}_{j}.$ From this and  using the fact that  $\gamma^2=0$ in $\mathcal{K}_{j},$ we see that  $a(x)$ is a unit in $\mathcal{K}_{j}.$

On the other hand, if  $b(x)\in \mathcal{R}_2[x]$ satisfies $\text{deg }b(x) < d_j$ and  $\overline{ b(x)} \neq 0,$ then by applying Lemma \ref{coprime}(a), we see that $b(x)$ and $f_j(x)$ are coprime in $\mathcal{R}_2[x],$ from which the desired result follows.
\vspace{-2mm}\item[{\it (b)}] In $\mathcal{K}_{j},$ we see that $f_j(x)^{p^s}=-\gamma g_j(x) \in \langle \gamma \rangle,$ which implies that $f_j(x)$ is nilpotent in $\mathcal{K}_j.$
 \\When $\beta \neq 0,$ by Lemma \ref{fac}, we see that $f_j(x)$ and $g_j(x)$ are coprime in $\mathcal{R}_2[x].$ Now by part (a), we note that $g_j(x)$ is a unit in $\mathcal{K}_{j},$ which implies that $\langle f_j(x)^{p^s}\rangle=\langle \gamma \rangle.$
Next when $\beta=0$ and $\text{char }\mathcal{R}_2=p,$ by Lemma \ref{fac}, we have $g_j(x)=0,$ which gives $f_j(x)^{p^s}=0$ in $\mathcal{K}_{j}.$
Finally, when $\beta=0$ and $\text{char }\mathcal{R}_2=p^2,$ by Lemma \ref{fac}, we have $g_j(x)= f_j(x)^{p^{s-1}}M_j(x),$ where $M_j(x)$ is coprime to $f_j(x)$ in $\mathcal{R}_2[x].$ This implies that $f_j(x)^{p^s}=-\gamma f_j(x)^{p^{s-1}}M_j(x).$ By part  (a), we see that $M_j(x)$ is a unit in $\mathcal{K}_j.$ From this, we obtain $\langle f_j(x)^{p^s} \rangle =\langle\gamma f_j(x)^{p^{s-1}} \rangle.$
\vspace{-2mm}\item[{\it (c)}]  When $\beta \neq 0,$ by part (b), we have $\langle f_j(x)^{p^s}\rangle=\langle\gamma \rangle,$ which implies that the nilpotency index of $f_j(x)$ in $\mathcal{K}_j$ is $2p^s.$
  Next when $\beta=0$ and $\text{char }\mathcal{R}_2=p,$ by part (b), we have $\langle f_j(x)^{p^s}\rangle =\{0 \}. $ From this and using the fact that $f_j(x)^{p^{s}-1} \neq 0$ in $\mathcal{K}_j,$ we see that the nilpotency index of $f_j(x)$ in $\mathcal{K}_j$ is $p^s.$ Finally, when $\beta=0$ and $\text{char }\mathcal{R}_2=p^2,$ by part (b), we see that $\langle f_j(x)^{p^s}\rangle = \langle\gamma f_j(x)^{p^{s-1}} \rangle.$ This implies that  $\gamma f_j(x)^{p^s}=0.$  We further observe that $\gamma f_j(x)^{p^s-1}\neq 0$ in $\mathcal{K}_{j}.$ From this, it follows that the nilpotency index of $f_j(x)$ in $\mathcal{K}_j$ is $2p^s-p^{s-1}.$
\vspace{-1mm}\end{enumerate}\vspace{-4mm}\end{proof}
\vspace{-2mm}For a positive integer $k$ and a subset  $\mathcal{S}$ of $\mathcal{R}_2$ with $0 \in \mathcal{S},$  let us define $\mathcal{P}_{k}(\mathcal{S})=\{g(x) \in \mathcal{S}[x] : \text{ either }g(x)=0 \text{ or }\text{deg }g(x) < k\}.$ By repeatedly applying division algorithm in $\mathcal{R}_2[x],$ every element $A(x) \in \mathcal{K}_j$ can be  uniquely written as $A(x)= \sum\limits_{i=0}^{p^s-1}  A_i(x)f_j(x)^i,$ where $A_i(x) \in \mathcal{P}_{d_j}(\mathcal{R}_2)$ for $0 \leq i \leq p^s-1.$ Further, each $A_i(x) \in \mathcal{P}_{d_j}(\mathcal{R}_2)$
 can be uniquely expressed as $A_i(x)=A_{i0}(x)+\gamma A_{i1}(x),$  where $A_{i0}(x), A_{i1}(x) \in \mathcal{P}_{d_j}(\mathcal{T}_{2}).$ In view of this, we see that every element $A(x) \in \mathcal{K}_{j}$ can be uniquely expressed as $A(x)= \sum\limits_{i=0}^{p^s-1}  A_{i0}(x)f_j(x)^i+ \gamma \sum\limits_{i=0}^{p^s-1}  A_{i1}(x)f_j(x)^i,$ where $A_{i0}(x), A_{i1}(x) \in \mathcal{P}_{d_j}(\mathcal{T}_{2})$ for each $i.$  
 
The following lemma  is useful to determine cardinalities of ideals of $\mathcal{K}_{j}.$

\vspace{-2mm}\begin{lem} \label{card} Let $1 \leq j \leq r$ be fixed and let $\mathcal{I}$ be an ideal of $\mathcal{K}_j.$ Then $\text{Res}_{\gamma}(\mathcal{I})=\Big\{ \overline{a_0(x)} \in \overline{\mathcal{R}_2}[x]/\langle\overline{f_j(x)}^{p^s}\rangle: a_0(x)+\gamma a_1(x) \in \mathcal{I} \text{ for some }a_0(x),a_1(x) \in \mathcal{T}_{2}[x]\Big\}$ and $Tor_{\gamma}(\mathcal{I})=\Big\{\overline{a_1(x)} \in \overline{\mathcal{R}_2}[x]/\langle\overline{f_j(x)}^{p^s}\rangle: \gamma a_1(x)\in \mathcal{I} \text{ for some } a_1(x) \in \mathcal{T}_{2}[x]\Big\}$ are ideals of $\overline{\mathcal{R}_2}[x]/\langle\overline{f_j(x)}^{p^s}\rangle.$ Moreover, we have $|\mathcal{I}|=|Res_{\gamma}(\mathcal{I})||Tor_{\gamma}(\mathcal{I})|.$\vspace{-3mm} \end{lem}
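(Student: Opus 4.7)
The plan is to exploit the natural reduction-mod-$\gamma$ map $\pi : \mathcal{K}_{j} \to \overline{\mathcal{R}_2}[x]/\langle\overline{f_j(x)}^{p^s}\rangle$, which is a well-defined surjective ring homomorphism because $\overline{k_j(x)}=\overline{f_j(x)}^{p^s}$. Since the image of an ideal under a surjective ring homomorphism is an ideal, $\pi(\mathcal{I})=\mathrm{Res}_{\gamma}(\mathcal{I})$ is automatically an ideal of the target ring. For $\mathrm{Tor}_{\gamma}(\mathcal{I})$, I would check the ideal axioms directly: closure under addition is immediate from $\gamma(a_{1}+b_{1})=\gamma a_{1}+\gamma b_{1}$, and closure under multiplication by any lift $c(x)$ of $\overline{c(x)}$ follows from $\gamma(c(x)a_{1}(x))=c(x)(\gamma a_{1}(x))\in \mathcal{I}$, noting that the class in the target ring does not depend on the choice of lift.

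For the cardinality equation, I would apply the first isomorphism theorem to $\pi|_{\mathcal{I}}$ to obtain $|\mathcal{I}|=|\mathcal{I}\cap \ker\pi|\cdot |\mathrm{Res}_{\gamma}(\mathcal{I})|=|\mathcal{I}\cap \gamma\mathcal{K}_{j}|\cdot |\mathrm{Res}_{\gamma}(\mathcal{I})|$. It then suffices to construct a bijection $\Psi:\mathrm{Tor}_{\gamma}(\mathcal{I})\to \mathcal{I}\cap \gamma\mathcal{K}_{j}$ by $\Psi(\overline{a_{1}(x)})=\gamma a_{1}(x)$. Surjectivity is straightforward: any element of $\mathcal{I}\cap \gamma\mathcal{K}_{j}$ has the form $\gamma b(x)$; expanding $b(x)=b_{0}(x)+\gamma b_{1}(x)$ via Proposition \ref{teich}(b) gives $\gamma b(x)=\gamma b_{0}(x)$ with $b_{0}(x)\in \mathcal{T}_{2}[x]$ and $\gamma b_{0}(x)\in \mathcal{I}$, so this element equals $\Psi(\overline{b_{0}(x)})$.

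The main obstacle will be showing that $\Psi$ is simultaneously well-defined and injective, which reduces to the single equivalence: $\gamma a_{1}(x)=\gamma a_{1}'(x)$ in $\mathcal{K}_{j}$ if and only if $\overline{a_{1}(x)}=\overline{a_{1}'(x)}$ in $\overline{\mathcal{R}_2}[x]/\langle\overline{f_j(x)}^{p^s}\rangle$. The easy direction uses the key identity $\gamma\cdot f_{j}(x)^{p^{s}}=-\gamma^{2}g_{j}(x)=0$ in $\mathcal{K}_{j}$: lifting an equality $a_{1}(x)-a_{1}'(x)=f_{j}(x)^{p^{s}}\tilde{Q}(x)+\gamma R(x)$ in $\mathcal{R}_2[x]$ and multiplying by $\gamma$ kills both terms modulo $k_j(x)$. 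For the converse, I would start from $\gamma(a_{1}(x)-a_{1}'(x))=h(x)k_{j}(x)$ in $\mathcal{R}_2[x]$, reduce modulo $\gamma$ to get $\overline{h(x)}\cdot \overline{f_j(x)}^{p^s}=0$ in the integral domain $\overline{\mathcal{R}_2}[x]$, conclude $h(x)=\gamma \tilde{h}(x)$ for some $\tilde{h}(x)\in \mathcal{R}_2[x]$, and then use $\gamma^{2}=0$ to rewrite the equation in $\mathcal{R}_2[x]$ as $\gamma(a_{1}(x)-a_{1}'(x))=\gamma \tilde{h}(x)f_{j}(x)^{p^{s}}$. Since $\mathrm{Ann}_{\mathcal{R}_2}(\gamma)=\gamma\mathcal{R}_2$, this forces $a_{1}(x)-a_{1}'(x)\equiv \tilde{h}(x)f_{j}(x)^{p^{s}}\pmod{\gamma\mathcal{R}_2[x]}$, and reducing modulo $\gamma$ yields $\overline{a_{1}(x)}=\overline{a_{1}'(x)}$ in the target ring, completing the argument.
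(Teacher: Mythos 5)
Your proposal is correct and follows essentially the same route as the paper: the authors also define the reduction map $\phi(a_0(x)+\gamma a_1(x))=\overline{a_0(x)}$ on $\mathcal{I}$, apply the first isomorphism theorem to get $|\mathcal{I}|=|\mathrm{Res}_{\gamma}(\mathcal{I})|\,|\ker\phi|$ with $\ker\phi=\{\gamma a_1(x)\in\mathcal{I}\}$, and then identify $|\ker\phi|$ with $|\mathrm{Tor}_{\gamma}(\mathcal{I})|$. The only difference is presentational (you view the map as a ring homomorphism on all of $\mathcal{K}_j$ rather than an $\overline{\mathcal{R}_2}$-module homomorphism on $\mathcal{I}$), and you carefully verify the bijection $\mathrm{Tor}_{\gamma}(\mathcal{I})\leftrightarrow\ker\phi$ that the paper dismisses as easy.
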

\vspace{-3mm}\begin{proof} One can easily show that $\text{Res}_{\gamma}(\mathcal{I})$ and $\text{Tor}_{\gamma}(\mathcal{I})$ are ideals of 
$\overline{\mathcal{R}_2}[x]/\langle\overline{f_j(x)}^{p^s}\rangle.$ To prove the second part, we shall view $\mathcal{R}_2$ as an $\overline{\mathcal{R}_2}-$module with respect to the addition in $\mathcal{R}_2$ and the scalar product defined as $\overline{a}r= a r $ for each $a \in \mathcal{T}_{2}$ and $r \in \mathcal{R}_2.$  Further,  we note that  $\overline{\mathcal{R}_2}[x]/\langle\overline{f_j(x)}^{p^s}\rangle$ can be viewed as an $\overline{\mathcal{R}_2}$-module. Thus the ideals  $\text{Res}_{\gamma}(\mathcal{I})$ and $\text{Tor}_{\gamma}(\mathcal{I})$ of $\overline{\mathcal{R}_2}[x]/\langle\overline{f_j(x)}^{p^s}\rangle$ can also be viewed as  $\overline{ \mathcal{R}_2}$-modules. Now define a map $\phi : \mathcal{I} \rightarrow \text{Res}_{\gamma}(\mathcal{I})$ as $\phi(a(x))=\overline{ a_0(x)}$ for each $a(x)=a_0(x)+\gamma  a_1(x)   \in\mathcal{I} $ with $a_0(x),a_1(x)\in \mathcal{T}_{2}[x].$ We see that $\phi$ is a surjective $\overline{ \mathcal{R}_2}$-module homomorphism and its  kernel is given by $\text{ker } \phi =\{a_0(x)+\gamma a_1(x) \in \mathcal{I}:\overline{ a_0(x)}=0\} = \{\gamma a_1(x) \in \mathcal{I}: a_1(x) \in \mathcal{T}_{2}[x]\}.$  From this, we get $|\mathcal{I}|=|\text{Res}_{\gamma}(\mathcal{I})| |\text{ker } \phi|.$ Further, one can easily see that  $|\text{Tor}_{\gamma}(\mathcal{I})| =|\text{ker } \phi|,$ from which the desired result follows immediately.
\end{proof}
\vspace{-1mm}The following lemma is useful to determine  orthogonal complements of all ideals of the ring $\mathcal{K}_{j}.$
\vspace{-2mm}\begin{lem} \label{lemdual} Let $1 \leq j \leq r $ be a fixed integer. Let $\mathcal{I}$ be an ideal of the ring $\mathcal{K}_{j},$ and let $\mathcal{I}^{\perp}$ be  the orthogonal complement of $\mathcal{I}$ in $\widehat{\mathcal{K}_{j}}.$  Then the following hold.\vspace{-1mm}
\begin{enumerate}
\vspace{-1mm}\item[(a)] $\mathcal{I}^{\perp}$ is an ideal of $\widehat{\mathcal{K}_{j}}.$
\vspace{-2mm}\item[(b)] $\mathcal{I}^{\perp}=\{a^{*}(x)\in \widehat{\mathcal{K}_{j}}: a(x) \in \text{ann}(\mathcal{I})\}=\text{ann}(\mathcal{I})^*.$
\vspace{-2mm}\item[(c)] If $\mathcal{I}=\langle f(x),\gamma g(x)\rangle,$ then $\mathcal{I}^{*}=\langle f^*(x),\gamma g^*(x)\rangle.$
\vspace{-2mm}\item[(d)] For $f(x),g(x) \in \mathcal{K}_{j},$ let us define $(fg)(x)=f(x)g(x)$ and $(f+g)(x)=f(x)+g(x).$\\
 If $f(x), g(x),(fg)(x)$ all are non-zero, then we have  $f^*(x) g^*(x)=x^{\text{deg }f(x)+\text{deg }g(x)-\text{deg }(fg)(x)}(fg)^*(x).$\\ If $f(x), g(x),(f+g)(x)$ all are non-zero, then we have
 \vspace{-3mm}$$(f+g)^*(x)=\left\{\begin{array}{ll} 
f^*(x)+x^{deg\, f(x)-deg\, g(x)}g^*(x)  & \text{if } \text{deg }f(x) > \text{deg } g(x);\\
x^{\text{deg }(f+g)(x)-\text{deg f(x)}}(f^*(x)+g^*(x)) & \text{if } \text{deg } f(x) = \text{deg }g(x).\end{array}\right.$$\vspace{-4mm}
\end{enumerate}\vspace{-7mm}\end{lem}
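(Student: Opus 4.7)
My plan is to prove the four items in the order (d), (a), (b), (c). The reciprocal-polynomial identities in (d) form the computational backbone; (a) and (b) then follow from Lemma \ref{pr2} combined with the component-wise description of $\mathcal{I}^{\perp}$ given in Proposition \ref{p2}; and (c) is the substantive step that glues everything together.

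For (d), I will simply unwind the definition $h^{*}(x) = x^{\deg h(x)} h(x^{-1})$. The product identity drops out of comparing $x^{\deg f + \deg g}(fg)(x^{-1})$ with $(fg)^{*}(x) = x^{\deg(fg)}(fg)(x^{-1})$, the extra $x$-factor accounting for cancellation when $(fg)(x)$ has smaller degree than expected. The sum identities follow by splitting on $\deg f(x) > \deg g(x)$ versus $\deg f(x) = \deg g(x)$ and comparing coefficients. For (a), closure of $\mathcal{I}^{\perp}$ under addition in $\widehat{\mathcal{K}_{j}}$ is immediate from the definition in Proposition \ref{p2}, and closure under $\widehat{\mathcal{K}_{j}}$-multiplication follows from the associativity identity $b(x)\bigl(a(x)c^{*}(x)\bigr) = \bigl(b(x)a(x)\bigr)c^{*}(x) = 0$. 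For (b), I will translate $a(x)c(x) = 0$ in $\mathcal{K}_{j}$ (i.e., $k_{j}(x) \mid a(x)c(x)$ in $\mathcal{R}_{2}[x]$) into $k_{j}^{*}(x) \mid a^{*}(x)c^{*}(x)$ by applying the product identity of (d), observing that $x$ is a unit in $\widehat{\mathcal{K}_{j}}$ (because $x^{np^{s}} = (\alpha+\gamma\beta)^{-1}$ is a unit) so that power-of-$x$ factors may be absorbed freely. The reverse inclusion uses that $*$ is, up to such a power of $x$, an involution on non-zero elements.

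For (c), a general element of $\mathcal{I} = \langle f(x), \gamma g(x)\rangle$ has the form $h(x) = f(x)h_{1}(x) + \gamma g(x) h_{2}(x)$ with $h_{1}, h_{2} \in \mathcal{K}_{j}$. Applying the product identity of (d) to each summand converts their reciprocals to $x^{e_{1}} f^{*}(x) h_{1}^{*}(x)$ and $x^{e_{2}} \gamma g^{*}(x) h_{2}^{*}(x)$, respectively, and the sum identity of (d) then expresses $h^{*}(x)$ as an explicit $\widehat{\mathcal{K}_{j}}$-linear combination of $f^{*}(x)$ and $\gamma g^{*}(x)$, with the spurious powers of $x$ absorbed via the unit status of $x$ in $\widehat{\mathcal{K}_{j}}$. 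The reverse inclusion $\langle f^{*}(x), \gamma g^{*}(x)\rangle \subseteq \mathcal{I}^{*}$ is obtained by exhibiting explicit pre-images under $*$ for the two generators. The main obstacle is precisely the degree-bookkeeping in this step: the operator $*$ is neither additive nor multiplicative on the nose, and the extra power-of-$x$ corrections introduced in (d) must be tracked through every reduction. The saving observation is that $x$ is a unit in $\widehat{\mathcal{K}_{j}}$, which lets all such stray factors be absorbed into the ideal generators without altering the ideal they generate.
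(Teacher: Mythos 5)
Your proposal is correct: the paper itself dismisses this lemma with ``Its proof is straightforward,'' and your plan is exactly the standard direct verification the authors have in mind --- unwinding $h^*(x)=x^{\deg h(x)}h(x^{-1})$ for (d), associativity for (a), transferring the divisibility $k_j(x)\mid a(x)c(x)$ through $*$ for (b), and tracking the generators for (c). Your identification of the key technical point --- that $x$ is a unit in $\mathcal{K}_j$ and $\widehat{\mathcal{K}_j}$, so the stray powers of $x$ produced by the non-multiplicativity of $*$ can be absorbed without changing any ideal --- is precisely what makes the bookkeeping in (b) and (c) close up.
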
\begin{proof} Its proof is straightforward.\vspace{-3mm}\end{proof}

From now on,  we shall distinguish the following two cases:  $\beta \neq 0$ and $\beta =0.$ 

In the following theorem,  we determine all ideals of the ring $\mathcal{K}_j,$ their sizes and their orthogonal complements in $\widehat{\mathcal{K}_{j}}$ when $\beta$ is non-zero.
\vspace{-2mm}\begin{thm}  \label{thm1} When $\beta \neq 0,$ the ring $\mathcal{K}_j$ is a finite commutative chain ring with unity whose ideals are given by $\{ 0 \} \subset \langle f_j(x)^{2p^s-1}\rangle \subset \langle f_j(x)^{2p^s-2}\rangle \subset \dots\subset \langle f_j(x)^{2}\rangle \subset \langle f_j(x)\rangle \subset \mathcal{K}_j.$ Moreover, for $0 \leq \nu \leq 2p^s,$ the ideal $\langle f_j(x)^\nu \rangle$ has  $ p^{m d_j (2p^s-\nu)}$ elements and the orthogonal complement of $\langle f_j(x)^{\nu}\rangle$ is given by $\langle f_j^*(x)^{2p^s-\nu}\rangle.$
\vspace{-2mm}\end{thm}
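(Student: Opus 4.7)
The plan is to establish the four assertions of the theorem---local chain-ring structure, enumeration of ideals, cardinalities, and orthogonal complements---in sequence, each leveraging the previous together with Lemmas \ref{nilred} and \ref{lemdual}.

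First I would show that $\mathcal{K}_j$ is a local principal ideal ring with maximal ideal $\langle f_j(x)\rangle$. By Lemma \ref{nilred}(a), any polynomial $A(x)\in \mathcal{R}_2[x]$ coprime to $f_j(x)$ is a unit in $\mathcal{K}_j$. Conversely, if $A(x)$ is not coprime to $f_j(x)$, then by Lemma \ref{coprime}(a) and the irreducibility of $\overline{f_j(x)}$, the image $\overline{A(x)}$ lies in $\langle \overline{f_j(x)}\rangle$, so $A(x)\in\langle f_j(x),\gamma\rangle$. Using the hypothesis $\beta\neq 0$, Lemma \ref{fac} together with Lemma \ref{nilred}(a) makes $g_j(x)$ a unit in $\mathcal{K}_j$ and gives $f_j(x)^{p^s}=-\gamma g_j(x)$, so $\gamma=-g_j(x)^{-1}f_j(x)^{p^s}\in\langle f_j(x)\rangle$ and hence $\langle f_j(x),\gamma\rangle=\langle f_j(x)\rangle$. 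Thus the non-units of $\mathcal{K}_j$ form the principal ideal $\langle f_j(x)\rangle$, and Proposition \ref{pr1} turns $\mathcal{K}_j$ into a chain ring. Combining this with Lemma \ref{nilred}(c), which identifies the nilpotency index of $f_j(x)$ in $\mathcal{K}_j$ as $2p^s$, Proposition \ref{pr1}(c) produces exactly the displayed chain of ideals.

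For the cardinalities, Proposition \ref{pr1}(c) gives $|\langle f_j(x)^\nu\rangle|=|\mathcal{K}_j/\langle f_j(x)\rangle|^{2p^s-\nu}$, so it suffices to identify the residue field. I would exhibit the natural ring map $\mathcal{R}_2[x]\twoheadrightarrow \overline{\mathcal{R}_2}[x]/\langle\overline{f_j(x)}\rangle$ (reduce modulo $\gamma$, then modulo $\overline{f_j(x)}$) and observe that both $k_j(x)$ and $f_j(x)$ lie in its kernel, so it descends to an isomorphism $\mathcal{K}_j/\langle f_j(x)\rangle\cong \overline{\mathcal{R}_2}[x]/\langle\overline{f_j(x)}\rangle$. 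Since $\overline{f_j(x)}$ is irreducible of degree $d_j$ over the field $\overline{\mathcal{R}_2}$ of order $p^m$, the residue field has $p^{md_j}$ elements, yielding $|\langle f_j(x)^\nu\rangle|=p^{md_j(2p^s-\nu)}$.

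For the orthogonal complements, Lemma \ref{lemdual}(b) reduces the task to computing $\text{ann}(\langle f_j(x)^\nu\rangle)^*$. Since $\mathcal{K}_j$ is a chain ring with maximal ideal $\langle f_j(x)\rangle$ of nilpotency index $2p^s$, every non-zero element has the form $u(x)f_j(x)^k$ with $u(x)$ a unit and $0\leq k\leq 2p^s-1$; thus $u(x)f_j(x)^k\cdot f_j(x)^\nu=0$ iff $k+\nu\geq 2p^s$, which gives $\text{ann}(\langle f_j(x)^\nu\rangle)=\langle f_j(x)^{2p^s-\nu}\rangle$. Applying Lemma \ref{lemdual}(c) with $g=0$ yields $\langle f_j(x)^{2p^s-\nu}\rangle^*=\langle (f_j(x)^{2p^s-\nu})^*\rangle$ in $\widehat{\mathcal{K}_j}$, and since $f_j(x)$ is monic, iterating the equal-degree-monic case of Lemma \ref{lemdual}(d) gives $(f_j(x)^k)^*=(f_j^*(x))^k$ for every $k\geq 0$, producing $\langle f_j(x)^\nu\rangle^\perp=\langle f_j^*(x)^{2p^s-\nu}\rangle$, as claimed. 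I expect the main obstacle to be the very first step: showing $\mathcal{K}_j$ is local depends critically on having $\gamma\in\langle f_j(x)\rangle$, which requires the hypothesis $\beta\neq 0$ so that $g_j(x)$ is a unit. Once that is secured the remaining steps unfold as routine consequences of the chain-ring machinery of Proposition \ref{pr1}.
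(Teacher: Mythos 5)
Your proposal is correct and follows essentially the same route as the paper: establish that $\mathcal{K}_j$ is local with maximal ideal $\langle f_j(x)\rangle$ via Lemma \ref{nilred}, invoke Proposition \ref{pr1} together with the nilpotency index $2p^s$ to get the chain of ideals and their cardinalities from the residue field of order $p^{md_j}$, and obtain the orthogonal complements by computing $\text{ann}(\langle f_j(x)^{\nu}\rangle)=\langle f_j(x)^{2p^s-\nu}\rangle$ and applying Lemma \ref{lemdual}. The only cosmetic difference is in how locality is verified — you argue directly that every element not coprime to $f_j(x)$ lies in $\langle f_j(x),\gamma\rangle=\langle f_j(x)\rangle$ using $\gamma=-g_j(x)^{-1}f_j(x)^{p^s}$, whereas the paper expands elements in the form $\sum_i A_{i0}(x)f_j(x)^i+\gamma\sum_i A_{i1}(x)f_j(x)^i$ and notes that such an element is a unit iff $A_{00}(x)\neq 0$ — but both rest on the same facts from Lemmas \ref{fac} and \ref{nilred}.
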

\vspace{-4mm}\begin{proof} To prove this, we see that each element $A(x)\in \mathcal{K}_{j}$ can be  uniquely expressed as $A(x)= \sum\limits_{i=0}^{p^s-1}  A_{i0}(x)f_j(x)^i+ \gamma \sum\limits_{i=0}^{p^s-1}  A_{i1}(x)f_j(x)^i,$ where $A_{i0}(x), A_{i1}(x) \in \mathcal{P}_{d_j}(\mathcal{T}_{2})$ for each $i.$ As $f_j(x)$ and $\gamma $  are nilpotent  in $\mathcal{K}_j,$ we see that $A(x)$ is a unit in $\mathcal{K}_{j}$ if and only if $A_{00}(x)$ is a unit in $\mathcal{K}_{j}.$ Further, by Lemma \ref{nilred}(a), we observe that $A_{00}(x) \in \mathcal{P}_{d_j}(\mathcal{T}_{2})$ is a unit in $\mathcal{K}_{j}$ if and only if $A_{00}(x) \neq 0.$  In view of this and by applying Lemma \ref{nilred}(b), we see that  $A(x) $ is a unit in $\mathcal{K}_{j}$ if and only if $A(x) \notin \langle f_j(x)\rangle.$ This shows that all the non-units of $\mathcal{K}_{j}$ are given by  $\langle f_j(x) \rangle.$ Therefore $\mathcal{K}_{j}$ is a local ring with the unique maximal ideal  as $\langle f_j(x) \rangle.$ This, by Proposition \ref{pr1} and Lemma \ref{nilred}(b), implies that $\mathcal{K}_{j}$ is a chain ring and all its ideals  are given by $\langle f_j(x) ^{\nu}\rangle,$ where  $0 \leq \nu \leq 2p^s.$ Further, we observe that $|\overline{\mathcal{K}_j}|=|\mathcal{K}_j/\langle f_j(x)\rangle|=p^{md_j}.$  Now by applying Proposition \ref{pr1} and Lemma \ref{nilred}(b) again, we see that $| \langle f_j(x) ^{\nu}\rangle|=p^{md_j(2p^s-\nu)}$ for $0 \leq \nu \leq 2p^s.$  In order to determine their dual codes, let $\mathcal{I}=\langle f_j(x)^{\nu} \rangle,$ where $0 \leq \nu \leq 2p^s.$  Here it is easy to observe that  $\text{ann}(\mathcal{I})=\langle f_j(x)^{2p^s-\nu}\rangle,$ which, by Lemma \ref{lemdual},  gives $\mathcal{I}^{\perp}=\text{ann}(\mathcal{I})^*=\langle f_j^*(x)^{2p^s-\nu}\rangle.$ This completes the proof of the theorem.
\vspace{-2mm}\end{proof}
As a consequence of the above theorem, we deduce the following:
\vspace{-2mm}\begin{cor}\label{c1} Let $\alpha =\alpha_0^{p^s}\in \mathcal{T}_{2} \setminus \{0\},$ where $\alpha_0 \in \mathcal{T}_{2}$ is such that $x^n-\alpha_0$ is  irreducible  over $\mathcal{R}_2 .$ When $\beta \in \mathcal{T}_{2}\setminus \{0\},$   $\langle (x^n-\alpha_0)^{p^{s}}\rangle=\langle\gamma \rangle$ is the only isodual $(\alpha+\gamma \beta)$-constacyclic code of length $np^s$ over $\mathcal{R}_2.$\vspace{-2mm}\end{cor}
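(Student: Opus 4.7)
The plan is to exploit the chain-ring structure of $\mathcal{R}_{\alpha,\beta}$ guaranteed by Theorem \ref{thm1}, narrow down to a unique candidate via a cardinality count, and then verify isoduality by a direct set-theoretic comparison inside $\mathcal{R}_2^{np^s}$.

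Because $x^n - \alpha_0$ is irreducible over $\mathcal{R}_2$, the factorization of Lemma \ref{fac} has only one factor $f_1(x) = x^n - \alpha_0$ of degree $d_1 = n$, and the hypothesis $\beta \neq 0$ places us in the setting of Theorem \ref{thm1}. Consequently, every $(\alpha+\gamma\beta)$-constacyclic code of length $np^s$ is of the form $\mathcal{C} = \langle (x^n-\alpha_0)^{\nu}\rangle$ with $|\mathcal{C}| = p^{mn(2p^s - \nu)}$ for some $0 \le \nu \le 2p^s$. If $\mathcal{C}$ is isodual then $|\mathcal{C}| = |\mathcal{C}^{\perp}|$, and combining with $|\mathcal{C}||\mathcal{C}^{\perp}| = |\mathcal{R}_2|^{np^s} = p^{2mnp^s}$ forces $\nu = p^s$. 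Invoking Lemma \ref{nilred}(b) identifies the unique candidate as $\langle (x^n-\alpha_0)^{p^s}\rangle = \langle\gamma\rangle$, which simultaneously settles the claimed equality and uniqueness.

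It remains to show that $\langle\gamma\rangle$ really is isodual. Under the standard identification of $\mathcal{R}_{\alpha,\beta}$ with $\mathcal{R}_2^{np^s}$, the ideal $\langle\gamma\rangle$ corresponds to the set $\gamma\mathcal{R}_2^{np^s}$ of all vectors with coordinates in $\langle\gamma\rangle_{\mathcal{R}_2}$: any element of the ideal equals $\gamma b(x)$ with $\deg b(x) < np^s$, and $\gamma^2 = 0$ absorbs the $\gamma$-part of each coefficient. Using the Euclidean inner product on $\mathcal{R}_2^{np^s}$, if $u$ belongs to the dual of this set then $\gamma u_i = u \cdot (\gamma e_i) = 0$ for every $i$, forcing $u_i \in \mathrm{ann}_{\mathcal{R}_2}(\gamma) = \langle\gamma\rangle$; conversely, for $u, c \in \gamma\mathcal{R}_2^{np^s}$ the product $u \cdot c$ lies in $\gamma^2\mathcal{R}_2 = 0$. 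Therefore $\mathcal{C}$ and $\mathcal{C}^{\perp}$ coincide as subsets of $\mathcal{R}_2^{np^s}$, and the identity map realises the required $\mathcal{R}_2$-linear equivalence.

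The only delicate step, in my view, is the last one: one must recognize that the standard description of the dual reduces, in this special case, to a purely set-theoretic statement independent of the constacyclic shift constant $\alpha+\gamma\beta$. Once that is seen, both existence and uniqueness of the isodual code drop out together, with no further calculation needed.
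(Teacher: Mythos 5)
Your proposal is correct and follows essentially the same route as the paper: Theorem \ref{thm1} reduces everything to $\mathcal{C}=\langle (x^n-\alpha_0)^{\nu}\rangle$, the cardinality comparison $|\mathcal{C}|=|\mathcal{C}^{\perp}|$ forces $\nu=p^s$, and Lemma \ref{nilred}(b) identifies the candidate as $\langle\gamma\rangle$. Your final verification is in fact slightly stronger than the paper's (which merely asserts that $\langle\gamma\rangle\subseteq\mathcal{R}_{\alpha,\beta}$ and $\langle\gamma\rangle\subseteq\widehat{\mathcal{R}_{\alpha,\beta}}$ are ``clearly'' equivalent): you show that as a subset of $\mathcal{R}_2^{np^s}$ the code equals $\gamma\mathcal{R}_2^{np^s}$ and is literally self-dual under the Euclidean inner product, which is a clean and correct way to close the argument.
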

\begin{proof}  As $x^n-\alpha_0$  is irreducible over $\mathcal{R}_2,$ by 
Theorem \ref{thm1},  we see that  all  $(\alpha+\gamma \beta)$-constacyclic codes of length $np^s$ over $\mathcal{R}_2$ are given by $\langle (x^n-\alpha_0)^{\nu}\rangle,$ where $0 \leq \nu \leq 2p^s.$  If $\mathcal{I}=\langle (x^n-\alpha_0)^{\nu}\rangle,$ then  by Theorem \ref{thm1} again,  we note that $|\mathcal{I}|=p^{m n(2p^s-\nu)}$ and $\mathcal{I}^{\perp}=\langle (x^n-\alpha_0^{-1})^{2p^s-\nu}\rangle$ for each $\nu.$  Working as in Theorem \ref{thm1}, we see that $|\mathcal{I}^{\perp}|= p^{mn\nu}$ for $0 \leq \nu \leq 2p^s.$
Now if the code $\mathcal{I}=\langle (x^n-\alpha_0)^{\nu}\rangle$ is isodual, then we must have $|\mathcal{I}|=|\mathcal{I}^{\perp}|,$ which implies that $\nu=p^s.$ On the other hand, we see that the codes  $\langle (x^n-\alpha_0)^{p^{s}}\rangle=\langle\gamma \rangle (\subseteq \mathcal{R}_{\alpha,\beta})$ and $\langle(x^n-\alpha_0^{-1})^{p^{s}} \rangle =\langle \gamma \rangle (\subseteq \widehat{\mathcal{R}_{\alpha,\beta}})$ are clearly $\mathcal{R}_2$-linearly equivalent, which completes the proof.\vspace{-1mm}
 \end{proof}

In the following theorem, we detemine Hamming distances of all $(\alpha+\gamma \beta)$-constacyclic codes of length $np^s$ over $\mathcal{R}_2$  when $\beta$ is non-zero and $x^n-\alpha_0$ is irreducible over $\mathcal{R}_2.$

\vspace{-2mm}\begin{thm}\label{dthm1}  Let $\alpha =\alpha_0^{p^s}\in \mathcal{T}_{2} \setminus \{0\},$ where $\alpha_0 \in \mathcal{T}_{2}$ is such that $x^n-\alpha_0$ is  irreducible  over $\mathcal{R}_{2} .$ Let  $\beta \in \mathcal{T}_{2}\setminus \{0\},$ and let  $\mathcal{C}=\langle (x^n-\alpha_0)^{\nu} \rangle$ be an $(\alpha+\gamma \beta)$-constacyclic code of length $np^s$ over $\mathcal{R}_2,$ where $ 0 \leq \nu \leq 2p^s.$ Then the Hamming distance $d_H(\mathcal{C})$ of the code $\mathcal{C}$ is given by
\vspace{-2mm}\begin{equation*}d_H(\mathcal{C})=\left\{\begin{array}{ll}
1 & \text{ if } 0 \leq \nu \leq p^s;\\
\ell+2 & \text{ if } p^s+\ell p^{s-1}+1 \leq \nu \leq p^s+(\ell +1)p^{s-1}, \text{ where } 0 \leq \ell \leq p-2;\\
(i+1)p^k & \text{ if } 2p^s-p^{s-k}+(i-1)p^{s-k-1}+1\leq \nu \leq 2p^s-p^{s-k}+ip^{s-k-1}, \\ & \text{ where } 1\leq i \leq p-1 \text{ and } 1 \leq k \leq s-1;\\
0  & \text{ if } \nu=2p^s.
\end{array}\right. \vspace{-2mm}\end{equation*}
\end{thm}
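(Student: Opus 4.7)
The plan is to split the range $0\leq \nu\leq 2p^s$ at $\nu=p^s$: for $0\leq \nu\leq p^s$, a direct argument gives $d_H(\mathcal{C})=1$; for $p^s<\nu\leq 2p^s$, I would identify $\mathcal{C}$ with an explicit $\bar\alpha$-constacyclic code over $\overline{\mathcal{R}_2}=\mathbb{F}_{p^m}$ and apply Theorem \ref{dthm}. Since $x^n-\alpha_0$ is irreducible over $\mathcal{R}_2$, the factorization in Lemma \ref{fac} has $r=1$, so $f_1(x)=x^n-\alpha_0$, $\mathcal{R}_{\alpha,\beta}=\mathcal{K}_1$, and $(x^n-\alpha_0)^{p^s}=-\gamma g_1(x)$ in $\mathcal{K}_1$ with $g_1(x)$ a unit in $\mathcal{K}_1$ by Lemma \ref{nilred}(a). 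Consequently, $\langle(x^n-\alpha_0)^{p^s}\rangle=\langle\gamma\rangle$ by Lemma \ref{nilred}(b).

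For $0\leq \nu\leq p^s$, note that $\mathcal{C}=\langle(x^n-\alpha_0)^{\nu}\rangle\supseteq\langle(x^n-\alpha_0)^{p^s}\rangle=\langle\gamma\rangle$, so $\gamma\in \mathcal{C}$ is a codeword of Hamming weight $1$, forcing $d_H(\mathcal{C})=1$. For $p^s<\nu\leq 2p^s$, write $\nu=p^s+\mu$ with $1\leq \mu\leq p^s$; since $g_1(x)$ is a unit,
\[
\mathcal{C}=\langle (x^n-\alpha_0)^{p^s+\mu}\rangle=\langle\gamma(x^n-\alpha_0)^{\mu}\rangle.
\]
Let $\mathcal{C}'=\langle(x^n-\bar\alpha_0)^{\mu}\rangle\subseteq \overline{\mathcal{R}_2}[x]/\langle(x^n-\bar\alpha_0)^{p^s}\rangle$, an $\bar\alpha$-constacyclic code of length $np^s$ over $\mathbb{F}_{p^m}$. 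I would define $\Phi\colon \mathcal{C}'\to \mathcal{C}$ by sending $\bar c(x)$, represented as a polynomial of degree less than $np^s$ with coefficients in $\overline{\mathcal{R}_2}$, to $\gamma\,\widetilde c(x)\in \mathcal{K}_1$, where $\widetilde c(x)\in \mathcal{T}_2[x]$ is the coefficient-wise Teichm\"uller lift of $\bar c(x)$. Because $\gamma^2=0$ and $\gamma(x^n-\alpha_0)^{p^s}=-\gamma^2 g_1(x)=0$ in $\mathcal{K}_1$, the map $\Phi$ is well-defined and its image lies in $\mathcal{C}$; surjectivity follows from the identity $\gamma(x^n-\alpha_0)^{\mu}h(x)=\Phi((x^n-\bar\alpha_0)^{\mu}\bar h(x))$, valid for every $h(x)\in \mathcal{K}_1$, and bijectivity then follows from the cardinality equality $|\mathcal{C}|=p^{mn(p^s-\mu)}=|\mathcal{C}'|$ provided by Theorem \ref{thm1}. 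Moreover, since $\gamma b\neq 0$ in $\mathcal{R}_2$ for every nonzero $b\in \mathcal{T}_2$, multiplication by $\gamma$ preserves the support of $\widetilde c(x)$, so $\Phi$ is weight-preserving and $d_H(\mathcal{C})=d_H(\mathcal{C}')$.

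Finally I would apply Theorem \ref{dthm} to $\mathcal{C}'$ with $\eta=\bar\alpha$, $\eta_0=\bar\alpha_0$ and $\upsilon=\mu=\nu-p^s$. Substituting $\upsilon=\nu-p^s$ translates the four ranges of $\upsilon$ in Theorem \ref{dthm} uniformly by $+p^s$, producing exactly the four ranges displayed in Theorem \ref{dthm1} with the corresponding Hamming distances $1,\ \ell+2,\ (i+1)p^k,\ 0$. The main obstacle I anticipate is the rigorous verification that $\Phi$ is well-defined, surjective onto $\mathcal{C}$, and weight-preserving; once this identification between $\mathcal{C}$ and the field code $\mathcal{C}'$ is in place, the result reduces entirely to Theorem \ref{dthm} via the index shift $\upsilon\mapsto \nu-p^s$.
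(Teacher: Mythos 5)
Your proposal is correct and follows essentially the same route as the paper: for $0\leq\nu\leq p^s$ one uses $\langle (x^n-\alpha_0)^{p^s}\rangle=\langle\gamma\rangle$ to get $\gamma\in\mathcal{C}$ and hence $d_H(\mathcal{C})=1$, and for $p^s<\nu<2p^s$ one rewrites $\mathcal{C}=\langle\gamma(x^n-\alpha_0)^{\nu-p^s}\rangle$ and identifies its Hamming distance with that of the $\overline{\alpha_0}^{p^s}$-constacyclic code $\langle(x^n-\overline{\alpha}_0)^{\nu-p^s}\rangle$ over $\overline{\mathcal{R}_2}$ before invoking Theorem \ref{dthm}. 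The only difference is that you spell out the weight-preserving bijection $\Phi$ via Teichm\"{u}ller lifts, which the paper leaves as an observation.
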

\begin{proof} It is easy to see that  $d_H(\mathcal{C})=0$ when $\nu=2p^s,$ and that   $d_H(\mathcal{C})=1$  when $\nu=0.$ 
Further, by Lemma \ref{nilred}(c), we see that $\langle (x^n-\alpha_0)^{p^s}\rangle=\langle \gamma \rangle,$ which implies that $\gamma \in \langle (x^n-\alpha_0)^{\nu} \rangle$ for  $1 \leq \nu \leq p^{s}.$ From this, we get $d_H(\mathcal{C})=1$ for $1 \leq \nu \leq p^s.$ 
Next for $p^s+1 \leq \nu \leq 2p^s-1,$ we see that the code $\mathcal{C}=\langle\gamma (x^n-\alpha_0)^{\nu-p^{s}} \rangle.$  From this, we observe that $d_H(\mathcal{C})$ is equal to the Hamming distance of  the $\overline{\alpha_0}^{p^s}$-constacyclic code $\langle (x^n-\overline{\alpha}_0)^{\nu-p^{s}} \rangle$ of length $np^s$ over  $\overline{\mathcal{R}_2}$ for $ p^s+1 \leq \nu \leq 2p^s-1.$ Now using Theorem \ref{dthm}, the desired result follows.\vspace{-2mm}\end{proof}

In the following theorem, we determine RT distances of all $(\alpha+\gamma \beta)$-constacyclic codes of length $np^s$ over $\mathcal{R}_2$ when $\beta \neq 0$ and $x^n-\alpha_0$ is irreducible over $\mathcal{R}_2.$
\vspace{-2mm}\begin{thm}\label{dRthm1} Let $\alpha =\alpha_0^{p^s}\in \mathcal{T}_{2} \setminus \{0\},$ where $\alpha_0 \in \mathcal{T}_{2}$ is such that $x^n-\alpha_0$ is  irreducible  over $\mathcal{R}_{2} .$ Let  $\beta \in \mathcal{T}_{2}\setminus \{0\},$ and let  $\mathcal{C}=\langle (x^n-\alpha_0)^{\nu} \rangle$ be an $(\alpha+\gamma \beta)$-constacyclic code of length $np^s$ over $\mathcal{R}_2,$ where $ 0 \leq \nu \leq 2p^s.$ Then the RT distance $d_{RT}(\mathcal{C})$ of the code $\mathcal{C}$ is given by 
\vspace{-4mm}\begin{equation*}d_{RT}(\mathcal{C})=\left\{\begin{array}{ll}
1 & \text{ if } 0 \leq \nu \leq p^s;\\
n\nu-np^{s}+1  & \text{ if } p^s+1 \leq \nu \leq 2p^s-1;\\
0  & \text{ if } \nu =2p^s.
\end{array}\right. \vspace{-3mm}\end{equation*}
\end{thm}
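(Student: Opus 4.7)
The plan is to split the argument into three cases according to $\nu$, reduce the only nontrivial case to an RT-distance computation in the residue-field quotient, and finish via a direct $(x^n-\overline{\alpha_0})$-adic expansion argument.

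First, I would dispose of the boundary cases. When $\nu = 2p^s$, Theorem~\ref{thm1} gives $\mathcal{C}=\{0\}$, so $d_{RT}(\mathcal{C})=0$. When $0 \leq \nu \leq p^s$, Lemma~\ref{nilred}(b) (with $\beta\neq 0$) yields $\langle (x^n-\alpha_0)^{p^s}\rangle = \langle\gamma\rangle$; hence $\gamma \in \langle (x^n-\alpha_0)^{\nu}\rangle = \mathcal{C}$, and since $w_{RT}(\gamma)=1$ while no nonzero codeword can have smaller RT weight, $d_{RT}(\mathcal{C})=1$.

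Next I turn to the main case $p^s+1 \leq \nu \leq 2p^s-1$. Put $\mu=\nu-p^s$, so $1\le \mu \le p^s-1$. Using $\langle (x^n-\alpha_0)^{p^s}\rangle=\langle\gamma\rangle$ again, I would rewrite $\mathcal{C}=\langle\gamma(x^n-\alpha_0)^{\mu}\rangle$. Since $\gamma^2=0$, every element of $\mathcal{C}$ has the form $\gamma\, h(x)(x^n-\alpha_0)^{\mu}$ with $h(x)\in\mathcal{T}_2[x]$. The key observation is that when reducing $\gamma h(x)(x^n-\alpha_0)^{\mu}$ modulo $x^{np^s}-\alpha-\gamma\beta$, the identity $\gamma^2\beta=0$ allows us to reduce $h(x)(x^n-\alpha_0)^{\mu}$ modulo $x^{np^s}-\alpha$ first and only then multiply by $\gamma$. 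Writing the reduced representative as $r(x)=r_0(x)+\gamma r_1(x)$ with $r_0(x),r_1(x)\in\mathcal{T}_2[x]$, one gets $\gamma r(x)=\gamma r_0(x)$, whose degree in $\mathcal{R}_{\alpha,\beta}$ equals $\deg \overline{r(x)}=\deg\bigl(\overline{h(x)}(x^n-\overline{\alpha_0})^{\mu}\bmod (x^n-\overline{\alpha_0})^{p^s}\bigr)$ in $\overline{\mathcal{R}_2}[x]$. Thus the RT distance of $\mathcal{C}$ equals the RT distance of the code $\langle(x^n-\overline{\alpha_0})^{\mu}\rangle$ inside $\overline{\mathcal{R}_2}[x]/\langle (x^n-\overline{\alpha_0})^{p^s}\rangle$.

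Finally I would compute this residue RT distance by a straightforward degree estimate. Since $x^n-\overline{\alpha_0}$ is irreducible of degree $n$ over $\overline{\mathcal{R}_2}$, each nonzero element of $\overline{\mathcal{R}_2}[x]/\langle (x^n-\overline{\alpha_0})^{p^s}\rangle$ admits a unique expansion $\sum_{i=0}^{p^s-1} a_i(x)(x^n-\overline{\alpha_0})^{i}$ with $\deg a_i(x)<n$, and membership in $\langle(x^n-\overline{\alpha_0})^{\mu}\rangle$ forces $a_i=0$ for $i<\mu$. Letting $i^{*}$ be the largest index with $a_{i^{*}}\neq 0$, the leading term of the sum coincides with that of $a_{i^{*}}(x)(x^n-\overline{\alpha_0})^{i^{*}}$ (because every other summand has degree strictly below $n(i^{*}+1)$ while the leading $x$-exponent of $a_{i^{*}}(x)(x^n-\overline{\alpha_0})^{i^{*}}$ is at least $ni^{*}$ and is uncancelled). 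Hence the degree of any nonzero element is $\geq ni^{*}\geq n\mu$, with equality when $i^{*}=\mu$ and $a_\mu$ is a nonzero constant. Therefore $d_{RT}(\mathcal{C})=n\mu+1=n\nu-np^s+1$.

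The main obstacle is the bookkeeping in the middle paragraph: carefully justifying that the $\gamma\beta$-deformation in the modulus $x^{np^s}-\alpha-\gamma\beta$ disappears upon multiplication by $\gamma$, so that the RT weight in $\mathcal{R}_{\alpha,\beta}$ collapses to the RT weight in the undeformed residue-field quotient $\overline{\mathcal{R}_2}[x]/\langle (x^n-\overline{\alpha_0})^{p^s}\rangle$. Once this reduction is in place, the degree argument is routine.
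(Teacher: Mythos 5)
Your proof is correct and follows essentially the same route as the paper's: both dispose of the cases $\nu=2p^s$ and $0\le\nu\le p^s$ via Lemma~\ref{nilred}(b), rewrite $\mathcal{C}=\langle\gamma(x^n-\alpha_0)^{\nu-p^s}\rangle=\{\gamma(x^n-\alpha_0)^{\nu-p^s}f(x):f(x)\in\mathcal{T}_2[x]\}$ for $p^s+1\le\nu\le 2p^s-1$, and conclude by bounding the degree of every nonzero codeword below by $n(\nu-p^s)$ with equality attained. The only difference is one of detail: you justify the inequality $w_{RT}(Q(x))\ge w_{RT}(\gamma(x^n-\alpha_0)^{\nu-p^s})$, which the paper asserts without elaboration, by passing to $\overline{\mathcal{R}_2}[x]/\langle(x^n-\overline{\alpha_0})^{p^s}\rangle$ and running the $(x^n-\overline{\alpha_0})$-adic leading-term argument.
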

\begin{proof} When $\nu=2p^s,$ we see that $d_{RT}(\mathcal{C})=0.$
Further, by Lemma \ref{nilred}(b), we have $\langle (x^n-\alpha_0)^{p^s} \rangle=\langle \gamma \rangle,$ which implies that $\gamma \in \langle (x^n-\alpha_0)^{\nu} \rangle $  for $1 \leq \nu \leq p^s.$ This implies that $d_{RT}(\mathcal{C})=1$  for $1 \leq \nu \leq p^s.$

Next for $p^s+1 \leq \nu \leq 2p^s-1,$  we see that  $\mathcal{C}=\langle (x^n-\alpha_0)^{\nu} \rangle=\langle \gamma(x^n-\alpha_0)^{\nu-p^s} \rangle=\{\gamma(x^n-\alpha_0)^{\nu-p^s} f(x) \ | \ f(x) \in \mathcal{T}_{2}[x]\}.$ From this, it follows that $w_{RT}(Q(x)) \geq w_{RT}(\gamma(x^n-\alpha_0)^{\nu-p^s})=  n\nu-np^s+1$ for each $Q(x) \in \mathcal{C} \setminus \{0 \}.$ Moreover, since $w_{RT}((x^n-\alpha_0)^{\nu})=w_{RT}(\gamma (x^n-\alpha_0)^{\nu-p^s})=n \nu-np^s+1,$ we get $d_{RT}(\mathcal{C})=n\nu-np^{s}+1,$ which completes the proof.  \vspace{-2mm}\end{proof}
In the following theorem, we determine RT weight distributions of all $(\alpha+\gamma \beta)$-constacyclic codes of length $np^s$ over $\mathcal{R}_2$ when $\beta \neq 0$ and $x^n-\alpha_0$ is irreducible over $\mathcal{R}_2.$
\vspace{-2mm}\begin{thm} \label{dthmRw} Let $\alpha =\alpha_0^{p^s}\in \mathcal{T}_{2} \setminus \{0\},$ where $\alpha_0 \in \mathcal{T}_{2}$ is such that $x^n-\alpha_0$ is  irreducible  over $\mathcal{R}_{2} .$ Let  $\beta \in \mathcal{T}_{2}\setminus \{0\},$ and let  $\mathcal{C}=\langle (x^n-\alpha_0)^{\nu} \rangle$ be an $(\alpha+\gamma \beta)$-constacyclic code of length $np^s$ over $\mathcal{R}_2,$ where $ 0 \leq \nu \leq 2p^s.$ For $0 \leq \rho \leq np^s,$ let $\mathcal{A}_\rho$ denote the number of codewords in $\mathcal{C}$ having the RT weight as $\rho.$\begin{enumerate}
\vspace{-2mm}\item[(a)] For $\nu=2p^s,$ we have 
\begin{equation*}\mathcal{A}_\rho=\left\{\begin{array}{ll}
1 & \text{ if } \rho=0;\\
0  & \text{ otherwise}.
\end{array}\right. \vspace{-2mm}\end{equation*}
\vspace{-7mm}\item[(b)] For $p^s+1 \leq \nu \leq 2p^s-1,$ we have 
\vspace{-3mm}\begin{equation*}\mathcal{A}_\rho=\left\{\begin{array}{ll}
1 & \text{ if } \rho=0;\\
0  & \text{ if } 1 \leq \rho \leq n\nu-np^s ;\\
(p^m-1)p^{m(\rho-n\nu+np^s-1)}  & \text{ if }  n\nu-np^s+1 \leq \rho \leq np^s.
\end{array}\right. \vspace{-2mm}\end{equation*}
\vspace{-5mm}\item[(c)] For $ \nu =yp^s$ with $y \in \{0,1 \},$  we have 
\vspace{-3mm}\begin{equation*}\mathcal{A}_\rho=\left\{\begin{array}{ll}
1 & \text{ if } \rho=0;\\
(p^{m(2-y)}-1)p^{m(2-y)(\rho-1)}  & \text{ if } 1 \leq \rho \leq np^s.
\end{array}\right. \vspace{-2mm}\end{equation*}
\vspace{-2mm}\item[(d)] For $1 \leq \nu \leq p^s-1,$  we have 
\vspace{-2mm}\begin{equation*}\mathcal{A}_\rho=\left\{\begin{array}{ll}
1 & \text{ if } \rho=0;\\
(p^m-1)p^{m(\rho-1)}  & \text{ if } 1 \leq \rho \leq n\nu ;\\
(p^{2m}-1)p^{m(2\rho-n\nu-2)} & \text{ if } n\nu+1 \leq \rho \leq np^s.
\end{array}\right. \vspace{-2mm}\end{equation*}
\end{enumerate}
\vspace{-2mm}\end{thm}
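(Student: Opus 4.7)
The plan is to compute $\mathcal{A}_{\rho}$ by fixing a canonical polynomial representative of degree less than $np^s$ for each codeword and then counting those of degree exactly $\rho-1$, since $w_{RT}(c(x))=1+\deg c(x)$ for $c(x)\neq 0$.

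Parts (a), (b) and (c) can be dispatched by giving an explicit parametrization of the codewords. In (a), $\mathcal{C}=\{0\}$ by Theorem \ref{thm1}. In (c) with $\nu=0$, $\mathcal{C}$ is the whole ring $\mathcal{R}_{\alpha,\beta}$, so every coefficient runs freely over $\mathcal{R}_2$ (which has $p^{2m}$ elements), and one reads off $(p^{2m}-1)p^{2m(\rho-1)}$. In (c) with $\nu=p^s$, Lemma \ref{nilred}(b) gives $\mathcal{C}=\langle\gamma\rangle$, so codewords are uniquely $\gamma h(x)$ with $h(x)\in\mathcal{T}_{2}[x]$ of degree less than $np^s$, and $\gamma t\neq 0$ for all $t\in\mathcal{T}_{2}\setminus\{0\}$ gives $\deg(\gamma h)=\deg h$, yielding $(p^m-1)p^{m(\rho-1)}$. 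For (b), Lemma \ref{nilred}(b) again shows $\mathcal{C}=\langle\gamma(x^n-\alpha_0)^{\nu-p^s}\rangle$; using $\gamma^2=0$ together with $\langle\gamma\rangle=\langle(x^n-\alpha_0)^{p^s}\rangle$, I would show each codeword is uniquely $\gamma(x^n-\alpha_0)^{\nu-p^s}h(x)$ with $h(x)\in\mathcal{T}_{2}[x]$ of degree less than $n(2p^s-\nu)$. The product has degree $n(\nu-p^s)+\deg h<np^s$ (so no reduction modulo $x^{np^s}-\lambda$ occurs) and a nonzero leading coefficient, so $\rho=n\nu-np^s+\deg h+1$, and the formula follows from $(p^m-1)p^{md}$ polynomials $h$ of each degree $d$.

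For part (d), the crucial observation is that $\gamma\in\mathcal{C}$ when $1\leq\nu\leq p^s-1$, because $\gamma\in\langle(x^n-\alpha_0)^{p^s}\rangle\subseteq\mathcal{C}$; hence $\langle\gamma\rangle\subseteq\mathcal{C}$. I would then use the decomposition $\mathcal{R}_2=\mathcal{T}_{2}\oplus\gamma\mathcal{T}_{2}$ from Proposition \ref{teich} to show that every codeword has a unique canonical form $g_0(x)+\gamma g_1(x)$, where $g_1\in\mathcal{T}_{2}[x]$ has degree less than $np^s$ (arbitrary) and $g_0\in\mathcal{T}_{2}[x]$ is the Teichm\"uller lift of an element of $\langle(x^n-\overline{\alpha}_0)^{\nu}\rangle\subseteq\overline{\mathcal{R}_2}[x]/\langle(x^n-\overline{\alpha}_0)^{p^s}\rangle$. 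Such a nonzero $g_0$ is the Teichm\"uller lift of $(x^n-\overline{\alpha}_0)^{\nu}\overline{g_0'(x)}$ with $\deg\overline{g_0'}<n(p^s-\nu)$, so $\deg g_0\in\{n\nu,n\nu+1,\dots,np^s-1\}$. Since any $a+\gamma b$ with $a\in\mathcal{T}_{2}\setminus\{0\}$ and $b\in\mathcal{T}_{2}$ is nonzero in $\mathcal{R}_2$ (by the uniqueness in Proposition \ref{teich}), leading terms never cancel, hence $\deg(g_0+\gamma g_1)=\max\{\deg g_0,\deg g_1\}$. Letting $T_D(d)$ and $T_E(e)$ denote the numbers of admissible $g_0$, $g_1$ of degree at most $d$, $e$, one computes $T_E(e)=p^{m(e+1)}$ and $T_D(d)=p^{m\max\{0,d-n\nu+1\}}$; then $\mathcal{A}_\rho=T_D(\rho-1)T_E(\rho-1)-T_D(\rho-2)T_E(\rho-2)$, and elementary algebra splits into the two ranges $1\leq\rho\leq n\nu$ and $n\nu+1\leq\rho\leq np^s$ claimed in (d).

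The main obstacle is the canonical-form description in (d): verifying that every codeword admits a unique decomposition $g_0+\gamma g_1$ of the above form requires using $\gamma\in\mathcal{C}$ to show that any Teichm\"uller lift $g_0$ of a residue-code element is automatically in $\mathcal{C}$, while the non-cancellation of leading terms between $g_0$ and $\gamma g_1$ relies crucially on the Teichm\"uller-set structure from Proposition \ref{teich}. Once this is in place, the counting in each of the four parts reduces to routine inclusion--exclusion on degrees.
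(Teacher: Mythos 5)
Your proposal is correct and follows essentially the same route as the paper: both rest on Lemma \ref{nilred}(b) to identify $\langle (x^n-\alpha_0)^{p^s}\rangle$ with $\langle \gamma\rangle$, parametrize the codewords by unique canonical forms with Teichm\"{u}ller coefficients, note that the leading term of the unit part and that of the $\gamma$-part cannot cancel, and then count by degree. The only cosmetic difference is in part (d), where the paper writes each codeword directly as $(x^{n}-\alpha_0)^{\nu}F_Q(x)+\gamma H_Q(x)$ with $F_Q(x),H_Q(x)\in \mathcal{T}_{2}[x]$ and splits the count into two explicit cases, whereas you pass through Teichm\"{u}ller lifts of residue codewords and a cumulative-count difference; the two computations agree.
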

\begin{proof} It is easy to see that $\mathcal{A}_{0}=1.$ So from now onwards, throughout the proof, we assume that $1 \leq \rho \leq n p^s.$ \begin{enumerate}
\vspace{-2mm}\item[(a)] When $\nu =2p^s,$ we have $\mathcal{C}=\{ 0\}.$ This gives $\mathcal{A}_\rho=0$ for $1 \leq \rho \leq n p^s.$ 
\vspace{-2mm}\item[(b)] Here by Theorem \ref{dRthm1}, we see that $d_{RT}(\mathcal{C})=n \nu -np^s+1,$ which gives $\mathcal{A}_\rho=0$ for $1 \leq \rho \leq n\nu -np^s.$ Next let $n\nu -np^s+1 \leq \rho \leq np^s.$ Here by Lemma \ref{nilred}(b), we see that $\langle (x^n-\alpha_0)^{p^s} \rangle =\langle \gamma \rangle.$ This implies that $\mathcal{C}=\langle \gamma(x^n-\alpha_0)^{\nu-p^s} \rangle=\{\gamma(x^n-\alpha_0)^{\nu-p^s} F(x) \ | \ F(x) \in \mathcal{T}_{2}[x]\}.$  From this, we observe that the RT weight of the codeword $\gamma(x^n-\alpha_0)^{\nu-p^s} F(x)$ is $\rho$ if and only if  $\text{ deg }F(x)=\rho-n\nu+np^s-1.$ This gives $\mathcal{A}_\rho=(p^m-1)p^{m(\rho-n\nu+np^s-1)}.$
\vspace{-2mm}\item[(c)] Next let $ \nu =y p^s,$ where $y \in \{0,1\}.$ Here by Lemma  \ref{nilred}(b), we see that $\mathcal{C}=\langle (x^n-\lambda_0)^{yp^s} \rangle=\langle \gamma^{y}\rangle= \{\gamma^{y} F(x) \ | \ F(x) \in \mathcal{R}_2[x]\}.$ From this, we see that $\mathcal{A}_\rho =(p^{m(2-y)}-1)p^{m(2-y)(\rho-1)}$ for $1 \leq \rho \leq np^s.$
\vspace{-2mm}\item[(d)]  Here  also, by Lemma \ref{nilred}(b), we note that $\langle (x^n-\alpha_0)^{p^s} \rangle =\langle \gamma \rangle,$ which implies that $\gamma \in \mathcal{C}.$  Further, we observe that any codeword $Q(x)\in \mathcal{C}$ can be uniquely written as $Q(x) =(x^n-\alpha_0)^\nu F_Q(x)+\gamma H_Q(x),$ where $F_{Q}(x), H_{Q}(x) \in \mathcal{T}_{2}[x]$  and $\text{ deg }F_Q(x) \leq n(p^s-\nu)-1$ if $F_Q(x) \neq 0.$  

When $1 \leq \rho \leq n\nu,$ we see that the RT weight of the codeword $Q(x) \in \mathcal{C}$ is $\rho$ if and only if  $F_Q(x)=0$ and $\text{ deg }H_Q(x)=\rho-1.$ From this, we obtain $\mathcal{A}_\rho=(p^m-1)p^{m(\rho-1)}$ for $1 \leq \rho \leq n\nu.$ 

Next let $n\nu+1 \leq \rho \leq np^s.$ Here  we see that the codeword $Q(x)\in \mathcal{C}$ has RT weight $\rho$ if and only if either  (i)  $\text{ deg }F_Q(x)=\rho-n\nu-1$ and $H_Q(x)$ is either 0 or $\text{deg }H_Q(x) \leq \rho-1;$  or  (ii)  $F_Q(x)$ is either 0 or $\text{deg }F_Q(x) \leq \rho-n\nu-2$ and $\text{ deg }H_Q(x) =\rho-1$ holds. From this, we obtain $\mathcal{A}_\rho=(p^m-1)p^{m(2\rho-n\nu-1)}+(p^{m}-1) p^{m(2\rho-n\nu -2)}=(p^{2m}-1)p^{m(2\rho-n\nu-2)}.$ 
\end{enumerate}
\vspace{-8mm}\end{proof}
\vspace{-2mm}From this point on, throughout this section, we assume that $\beta =0.$ 

In the following theorem, we determine all  ideals of the ring $\mathcal{K}_{j}$ when $\beta = 0.$ 
  \vspace{-2mm}\begin{thm}  \label{thm2} When $\beta= 0,$ all the distinct ideals of the ring $\mathcal{K}_{j}$ are as listed below:
\begin{enumerate}\vspace{-2mm}\item[Type I:] (Trivial ideals)
\vspace{-2mm}$$\{0\},   ~~\mathcal{K}_j. \vspace{-2mm}$$
\vspace{-7mm}\item[Type II:] (Principal ideals with non-monic polynomial generators)
\vspace{-2mm} $$ \langle\gamma f_j(x)^\tau  \rangle, \text{ where } 0 \leq \tau  < p^s.\vspace{-2mm}$$
\vspace{-9mm}\item[Type III:] (Principal ideals with monic polynomial generators)
  \vspace{-2mm}$$\langle f_j(x)^{\omega}+\gamma f_j(x)^t G(x)\rangle, \vspace{-4mm}$$
 where $0 < \omega <  p^s,$ $0 \leq t< \kappa$ if $G(x) \neq 0$ and $G(x)$ is either 0 or a unit in $\mathcal{K}_{j}$ of the form $\sum\limits_{i=0}^{\kappa-t-1} a_{i}(x)f_j(x)^{i}$ with $a_{i}(x) \in \mathcal{P}_{d_j}(\mathcal{T}_{2})$ for $0 \leq i \leq \kappa-t-1.$ Here $\kappa$ is the smallest  integer satisfying $0 \leq \kappa \leq \omega$ and $\gamma f_j(x)^{\kappa} \in \langle f_j(x)^{\omega}+\gamma f_j(x)^t G(x)\rangle.$
\vspace{-2mm}\item[Type IV:] (Non-principal ideals)
  \vspace{-2mm}$$\langle f_j(x)^{\omega}+\gamma f_j(x)^t G(x), \gamma f_j(x)^\mu \rangle , \vspace{-1mm}$$ where $0 \leq \mu <\kappa \leq \omega <  p^s,$ $0 \leq t < \mu$ if $G(x) \neq 0$ and $G(x)$ is either 0 or a unit in $\mathcal{K}_{j}$ of the form $\sum\limits_{i=0}^{\mu-t-1} a_{i}(x)f_j(x)^{i},$  $a_{i}(x) \in \mathcal{P}_{d_j}(\mathcal{T}_{2})$ for $0 \leq i \leq \mu-t-1,$ with $\kappa$ as the smallest integer satisfying $ \gamma f_j(x)^{\kappa} \in \langle f_j(x)^{\omega}+\gamma f_j(x)^t G(x)\rangle.$
 \end{enumerate}\vspace{-5mm}
\end{thm}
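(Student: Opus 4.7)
The plan is to classify all ideals of $\mathcal{K}_j$ via the Residue/Torsion decomposition provided by Lemma \ref{card}. Since $\overline{f_j(x)}$ is irreducible over the field $\overline{\mathcal{R}_2}$, the quotient $\overline{\mathcal{R}_2}[x]/\langle\overline{f_j(x)}^{p^s}\rangle$ is a chain ring whose only ideals are $\langle\overline{f_j(x)}^\nu\rangle$ with $0 \leq \nu \leq p^s$. Therefore for any ideal $\mathcal{I}$ of $\mathcal{K}_j$ there exist unique integers $\omega,\mu \in \{0,1,\dots,p^s\}$ with $\text{Res}_\gamma(\mathcal{I}) = \langle\overline{f_j(x)}^\omega\rangle$ and $\text{Tor}_\gamma(\mathcal{I}) = \langle\overline{f_j(x)}^\mu\rangle$. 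From the inclusion $\text{Res}_\gamma(\mathcal{I}) \subseteq \text{Tor}_\gamma(\mathcal{I})$ (obtained by multiplying any $a_0(x)+\gamma a_1(x) \in \mathcal{I}$ by $\gamma$), I deduce $\mu \leq \omega$. These two invariants will parametrize the classification.

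I would then split on $\omega$. The degenerate case $\omega = p^s$ forces $\mathcal{I} \subseteq \langle\gamma\rangle$; together with the Torsion data, a direct argument shows $\mathcal{I} = \langle\gamma f_j(x)^\mu\rangle$, yielding the zero ideal when $\mu = p^s$ (Type I) and the Type II ideals when $0 \leq \mu < p^s$. The case $\omega = 0$ forces $1 \in \mathcal{I}$ by Lemma \ref{nilred}(a), so $\mathcal{I} = \mathcal{K}_j$ (Type I). This disposes of all boundary cases.

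For the principal case $0 < \omega < p^s$, I would select an element of $\mathcal{I}$ whose image in $\text{Res}_\gamma(\mathcal{I})$ equals $\overline{f_j(x)}^\omega$ and, after dividing by the unit produced by Lemma \ref{nilred}(a), normalize it to the shape $h(x) = f_j(x)^\omega + \gamma G_1(x)$ with $G_1(x) \in \mathcal{T}_2[x]$. A routine double-inclusion argument using the Res/Tor characterisation then gives $\mathcal{I} = \langle h(x), \gamma f_j(x)^\mu\rangle$. I further refine $h(x)$ in two steps: first reduce $G_1(x)$ modulo $\gamma f_j(x)^\mu \in \mathcal{I}$ to eliminate the $f_j$-adic coefficients of valuation $\geq \mu$; second, write the remainder as $f_j(x)^t G(x)$ with either $G(x)=0$ or $G(x)$ a unit with expansion $\sum_{i=0}^{\mu-t-1} a_i(x) f_j(x)^i$, $a_i(x)\in \mathcal{P}_{d_j}(\mathcal{T}_2)$. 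Uniqueness of this normal form is what ultimately ensures the listed ideals are pairwise distinct.

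Finally, I would define $\kappa$ as the least integer with $\gamma f_j(x)^\kappa \in \langle h(x)\rangle$; since $\gamma h(x) = \gamma f_j(x)^\omega$, we have $\kappa \leq \omega$. If $\kappa \leq \mu$ then $\gamma f_j(x)^\mu \in \langle h(x)\rangle$, forcing $\kappa = \mu$ and yielding the principal Type III ideal (with the bound on $G$ relaxing to $i \leq \kappa - t - 1$). If $\mu < \kappa$ then the second generator $\gamma f_j(x)^\mu$ is genuinely required and one obtains a Type IV ideal. The main obstacle I anticipate is precisely this last step: computing $\kappa$ and proving non-redundancy of the listed parameters requires careful manipulation of the identity $f_j(x)^{p^s} = -\gamma g_j(x)$, whose right-hand side differs between $\text{char }\mathcal{R}_2 = p$ (where $g_j=0$) and $\text{char }\mathcal{R}_2 = p^2$ (where $g_j = f_j(x)^{p^{s-1}} M_j(x)$ with $M_j(x)$ a unit by Lemma \ref{fac}), so this case split will propagate through the endgame and control the precise value of $\kappa$.
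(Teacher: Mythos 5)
Your proposal is correct and follows essentially the same route as the paper: the invariants $\omega$ and $\mu$ you extract from $\text{Res}_{\gamma}(\mathcal{I})$ and $\text{Tor}_{\gamma}(\mathcal{I})$ are exactly the paper's minimal $f_j$-adic valuations of the non-$\gamma$ and $\gamma$ parts of elements of $\mathcal{I}$ (the paper argues directly with these minima rather than invoking Lemma \ref{card}), and the remaining steps --- normalizing a minimal-valuation generator to $f_j(x)^{\omega}+\gamma G_1(x)$ via Lemma \ref{nilred}(a), reducing $G_1(x)$ modulo $\gamma f_j(x)^{\mu}$ to get the form $f_j(x)^tG(x)$, and separating Types III and IV according to whether $\kappa=\mu$ or $\mu<\kappa$ --- coincide with the paper's proof. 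Your closing worry about computing $\kappa$ explicitly is not needed for this theorem; the paper likewise leaves $\kappa$ defined abstractly here and only evaluates it later in Propositions \ref{charp} and \ref{charp2}.
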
  
\begin{proof} Let $\mathcal{I}$ be a non-trivial ideal of $\mathcal{K}_{j}.$ Now  the following two cases arise: {\bf (i)} $ \mathcal{I} \subseteq\langle\gamma \rangle$ and {\bf (ii)} $ \mathcal{I}\nsubseteq \langle\gamma \rangle.$  
\begin{description}\vspace{-2mm}\item[(i)]   First suppose that $\mathcal{I} \subseteq \langle\gamma \rangle.$ In this case, each element $Q(x) \in \mathcal{I}$ can be uniquely written as $Q(x)=\gamma \sum\limits_{i=0}^{p^s-1}A_{i}^{(Q)}(x){f_j(x)} ^i$, where  $A_{i}^{(Q)}(x) \in \mathcal{P}_{d_j}(\mathcal{T}_{2})$ for $0 \leq i \leq p^s-1.$ Further, for each $Q(x)(\neq 0)\in\mathcal{I},$ we observe that there exists a smallest integer $k_Q$ satisfying  $0\leq k_Q \leq p^{s}-1$ and $A_{k_Q}^{(Q)}(x)\neq 0.$  Let $\tau =\min \{k_Q : Q(x) \in \mathcal{I}\setminus\{0\}\}.$  We note that $0 \leq \tau \leq p^s-1$ and that  there exists $Q_0(x) (\neq 0) \in \mathcal{I}$ such that $k_{Q_0}=\tau,$ i.e., $Q_0(x)=\gamma {f_j(x)} ^{\tau} \sum\limits_{i=\tau}^{p^s-1}A_i^{(Q_0)}(x) {f_j(x)} ^{i-\tau}$ with $A_\tau ^{(Q_0)}(x) \neq 0.$ By Lemma \ref{nilred},  we observe that  $\sum\limits_{i=\tau}^{p^s-1}A_i^{(Q_0)}(x) {f_j(x)} ^{i-\tau}$ is a unit in $\mathcal{K}_{j},$ which implies that $\langle\gamma {f_j(x)} ^\tau\rangle = \langle Q_0(x)\rangle\subseteq\mathcal{I}.$ Moreover, each element $Q(x)\in \mathcal{I}$ can be written as $Q(x)=\gamma {f_j(x)} ^{\tau} \sum\limits_{i=k_Q}^{p^s-1}A_i^{(Q)}(x) {f_j(x)} ^{i-\tau},$ which implies that $\mathcal{I}\subseteq \langle\gamma {f_j(x)} ^\tau \rangle.$ This gives $\mathcal{I}=\langle\gamma {f_j(x)} ^\tau\rangle$ with $0\leq \tau \leq p^{s}-1,$ which is of Type II.
\vspace{-3mm}\item[(ii)] Next suppose that  $\mathcal{I} \not\subseteq \langle\gamma \rangle.$ Here each $Q(x) \in \mathcal{I}$ can be uniquely written as $Q(x)=\sum\limits_{i=0}^{p^s-1}A_i^{(Q)}(x){f_j(x)} ^{i}+\gamma \sum\limits_{\ell=0}^{p^s-1}B_{\ell}^{(Q)}(x) {f_j(x)} ^{\ell},$ where $A_i^{(Q)}(x), B_{\ell}^{(Q)}(x) \in \mathcal{P}_{d_j}(\mathcal{T}_{2})$  for each $i$ and $\ell.$ Now let us define $\mathcal{I}_{1}=\{Q(x) \in \mathcal{I}: A_i^{(Q)}(x) \neq 0 \text{ for some }i, ~0 \leq i \leq p^s-1 \}$ and $\mathcal{I}_{2}=\{Q(x) \in \mathcal{I}: A_i^{(Q)}(x) =0 \text{ for all }i, ~0 \leq i \leq p^s-1\}.$ Since $\mathcal{I} \not\subseteq \langle\gamma \rangle,$ we see that $\mathcal{I}_{1}$ is a non-empty set and $0 \not\in \mathcal{I}_{1}.$ We also observe that  $\gamma\mathcal{I}_{1} \subseteq \mathcal{I}_{2},$  which implies that $\mathcal{I}_{2} \neq \{0\}.$ Further, we note that $\mathcal{I}_{2}$ is a non-zero ideal of $\mathcal{K}_{j}$ and $\mathcal{I}_{2}\subseteq \langle\gamma \rangle.$ This, by case (i), implies that $\mathcal{I}_{2}=\langle\gamma {f_j(x)} ^{\mu}\rangle$ for some integer $\mu,~0 \leq \mu \leq p^s-1.$ Next for each $Q(x)\in \mathcal{I}_{1},$ there exists a smallest integer $\omega_Q$ satisfying $0 \leq \omega_Q \leq p^s-1$ and $A_{\omega_Q}^{(Q)}(x) \neq 0,$ i.e., each $Q(x)\in \mathcal{I}_{1}$ can be written as $Q(x)= f_j(x)^{\omega_Q}W_Q(x)+\gamma M_Q(x),$ where $W_Q(x)=\sum\limits_{i=\omega_Q}^{p^s-1}A_i^{(Q)}(x) {f_j(x)} ^{i-\omega_Q} $ and $M_Q(x)= \sum\limits_{\ell=0}^{p^s-1}B_{\ell}^{(Q)}(x) {f_j(x)} ^{\ell}$  in $\mathcal{K}_{j}.$ By Lemma \ref{nilred}, we see that  $W_Q(x) $ is a unit in $\mathcal{K}_{j}.$ Now let $\omega =\min \{\omega_{Q}: Q(x) \in \mathcal{I}_{1}\}.$ As $\mathcal{I} \neq \mathcal{K}_{j},$ we see that $1 \leq \omega \leq p^s-1.$ Also,  there exists $Q_1(x) \in \mathcal{I}_{1}$ such that $\omega_{Q_1}=\omega,$ i.e., $Q_1(x)= {f_j(x)} ^{\omega} W_{Q_1}(x)+\gamma M_{Q_1}(x),$ where $ W_{Q_1}(x),M_{Q_1}(x) \in \mathcal{K}_{j}.$  For each $Q(x) \in \mathcal{I}_{1},$ we observe that $Q(x)= f_j(x)^{\omega_Q} W_Q(x)+\gamma M_Q(x)=f_j(x)^{\omega_Q-\omega}W_Q(x)Q_1(x)W_{Q_1}(x)^{-1}+\gamma \{M_Q(x)-M_{Q_1}(x)W_{Q_1}(x)^{-1}W_{Q}(x)f_j(x)^{\omega_Q-\omega}\}.$ From this, we see that $\gamma \left(M_Q(x)-M_{Q_1}(x)W_{Q_1}(x)^{-1}W_{Q}(x)f_j(x)^{\omega_Q-\omega}\right) \in \mathcal{I}_2=\langle\gamma f_j(x)^{\mu}\rangle$ for every $Q(x) \in \mathcal{I}_{1}.$ This implies that each  $Q(x) \in \mathcal{I}_{1}$ can be written as $Q(x) =Q_1(x)U_Q(x)+\gamma {f_j(x)} ^{\mu} K_Q(x),$ for some $U_Q(x), K_Q(x) \in \mathcal{K}_{j}.$
From this, we get $\mathcal{I}=\langle Q_1(x), \gamma f_j(x) ^{\mu}\rangle= \langle f_j(x) ^{\omega}W_{Q_1}(x)+\gamma M_{Q_1}(x), \gamma f_j(x) ^{\mu}\rangle.$ As $W_{Q_1}(x)$ is a unit in $\mathcal{K}_{j},$ we obtain $\mathcal{I}=\langle f_j(x) ^{\omega}+\gamma M_{Q_1}(x)W_{Q_1}(x)^{-1}, \gamma f_j(x) ^{\mu}\rangle.$ Let us write $\gamma M_{Q_1}(x)W_{Q_1}(x)^{-1}=\gamma \sum\limits_{i=0}^{p^s-1}\mathcal{G}_i(x) f_j(x) ^i,$ where $\mathcal{G}_i(x) \in \mathcal{P}_{d_j}(\mathcal{T}_{2})$ for $0 \leq i \leq p^s-1.$ For all  $i \geq \mu,$ we note that $\gamma f_j(x) ^i\in \langle\gamma f_j(x)^{\mu}\rangle  (\subseteq \mathcal{I}),$ which implies that  $\mathcal{I}=<f_j(x) ^{\omega}+\gamma \sum\limits_{i=0}^{\mu-1}\mathcal{G}_i(x) f_j(x) ^i, \gamma f_j(x) ^{\mu}>.$ Let us denote $G_1(x)=\sum\limits_{i=0}^{\mu-1}\mathcal{G}_i(x) f_j(x) ^i.$ When $G_1(x) \neq 0,$ there exists a smallest integer $t~(0 \leq t < \mu)$ satisfying $\mathcal{G}_{t}(x) \neq 0$ and we can write   $G_1(x) =f_j(x)^t G(x),$ where $G(x)=\sum\limits_{i=t}^{\mu-1} \mathcal{G}_{i}(x)f_j(x)^{i-t}$ is a unit in $\mathcal{K}_{j}.$ When $G_1(x)=0,$ we choose $G(x)=0.$ From this, we have $\mathcal{I} =\langle f_j(x) ^{\omega}+\gamma f_j(x) ^t G(x),   \gamma f_j(x) ^{\mu}\rangle,$ where $G(x)$ is either 0 or a unit in $\mathcal{K}_{j}$ of the form $\sum\limits_{i=0}^{\mu-t-1} a_{i}(x)f_j(x)^{i}$ with $a_{i}(x) \in \mathcal{P}_{d_j}(\mathcal{T}_{2})$ for $0 \leq i \leq \mu-t-1.$
Since $\kappa$ is the smallest non-negative integer satisfying $\gamma f_j(x)^{\kappa} \in \langle f_j(x)^{\omega}+\gamma f_j(x)^t G(x)\rangle$ and $\gamma f_j(x)^{\omega} \in \langle f_j(x)^{\omega}+\gamma f_j(x)^t G(x)\rangle,$ we get $\kappa \leq \omega.$
As $\gamma f_j(x)^{\kappa} \in \mathcal{I}_{2},$ we must have $\mu \leq  \kappa.$  Moreover, when $\mu=\kappa,$ we note that $\mathcal{I}=\langle f_j(x) ^{\omega}+\gamma f_j(x) ^t G(x)\rangle,$ i.e., $\mathcal{I}$ is of Type III.  In the view of this, we see that for $\mathcal{I} =\langle f_j(x) ^{\omega}+\gamma f_j(x) ^t G(x),   \gamma f_j(x) ^{\mu}\rangle$ to be of Type IV, we must have $\mu < \kappa.$ 
\vspace{-3mm}\end{description} 
This completes the proof of the theorem.
\vspace{-2mm}\end{proof}  
By the above theorem, we see that $\kappa$ is the smallest non-negative integer satisfying $\gamma f_j(x)^{\kappa} \in \langle f_j(x)^{\omega}+\gamma f_j(x)^t G(x)\rangle.$ As $\gamma f_j(x)^{\omega} \in \langle f_j(x) ^{\omega}+\gamma f_j(x) ^t G(x)\rangle,$ we have $\kappa \leq \omega.$ Further,  $\gamma f_j(x)^{\kappa} $ can be written as  
\vspace{-3mm}\begin{equation}\label{rep}\gamma f_j(x)^{\kappa}=\big(f_j(x)^{\omega}+\gamma f_j(x)^t G(x)\big) \big(\sum\limits_{i=0}^{p^s-1}A_i(x)f_j(x)^i+\gamma \sum\limits_{\ell=0}^{p^s-1}B_\ell(x) f_j(x)^\ell\big),\vspace{-3mm}\end{equation} where $A_i(x), B_\ell(x) \in \mathcal{T}_{2}[x]$ for each $i$ and $\ell.$

 In the following proposition, we determine the integer $\kappa$ when $\text{char }\mathcal{R}_2=p$ and $\beta=0.$
\vspace{-2mm}\begin{prop} \label{charp} When $\text{char }\mathcal{R}_2=p$ and $\beta=0,$ we have \vspace{-2mm}\begin{equation*}\kappa=\left\{ \begin{array}{ll} \omega & \text{if } G(x)=0;\\ \min\{\omega, p^s-\omega+t\} & \text{if }G(x)\neq 0 . \end{array}  \right.\vspace{-2mm} \end{equation*}\end{prop}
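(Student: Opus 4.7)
The plan is to sandwich $\kappa$ between matching upper and lower bounds, working with the generator $u = f_j(x)^{\omega}+\gamma f_j(x)^{t}G(x)$ of the ideal.

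For the upper bound, I would use two direct multiplications in $\mathcal{K}_j$. Since $\gamma^{2}=0$, we have $\gamma u=\gamma f_j(x)^{\omega}$, so $\gamma f_j(x)^{\omega}\in\langle u\rangle$ and hence $\kappa\le\omega$. When $G(x)\neq 0$, Lemma \ref{nilred}(b) gives $f_j(x)^{p^s}=0$ in $\mathcal{K}_j$ (under the hypotheses $\text{char }\mathcal{R}_2=p$ and $\beta=0$), so
\[
f_j(x)^{p^s-\omega}\,u \;=\; f_j(x)^{p^s}+\gamma f_j(x)^{p^s-\omega+t}G(x) \;=\; \gamma f_j(x)^{p^s-\omega+t}G(x);
\]
since $G(x)$ is a unit in $\mathcal{K}_j$, multiplying by $G(x)^{-1}$ shows $\gamma f_j(x)^{p^s-\omega+t}\in\langle u\rangle$, giving $\kappa\le p^s-\omega+t$.

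For the lower bound, suppose $\gamma f_j(x)^{\kappa'}=u\cdot h$ for some $\kappa'\ge 0$ and $h\in\mathcal{K}_j$. The characteristic-$p$ assumption yields the ring identification $\mathcal{R}_2\cong\overline{\mathcal{R}_2}[\gamma]/\langle\gamma^{2}\rangle$ (the Teichm\"uller set becomes a coefficient subfield), so I may take $f_j(x)$ to have coefficients in $\overline{\mathcal{R}_2}$, and then each element of $\mathcal{K}_j$ decomposes uniquely as $A+\gamma B$ with $A,B\in\overline{\mathcal{R}_2}[x]/\langle f_j^{p^s}\rangle$. Writing $h=h_0+\gamma h_1$ in this way (note that any $\gamma$-part of $G$ drops out of $\gamma f_j^{t}G$ since $\gamma^{2}=0$, so $G$ itself may be taken in $\overline{\mathcal{R}_2}[x]$ per the hypothesis of Theorem \ref{thm2}) and expanding gives
\[
uh \;=\; f_j^{\omega}h_0+\gamma\bigl(f_j^{\omega}h_1+f_j^{t}G\,h_0\bigr).
\]
Equating with $\gamma f_j^{\kappa'}$ and using uniqueness of the decomposition yields $f_j^{\omega}h_0\equiv 0\pmod{f_j^{p^s}}$ and $f_j^{\omega}h_1+f_j^{t}G\,h_0\equiv f_j^{\kappa'}\pmod{f_j^{p^s}}$ in $\overline{\mathcal{R}_2}[x]$. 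The first congruence forces $h_0=f_j^{p^s-\omega}c$ for some $c$; substituting into the second gives
\[
f_j^{\omega}h_1+f_j^{p^s-\omega+t}G\,c\;\equiv\;f_j^{\kappa'}\pmod{f_j^{p^s}}
\]
(the middle term absent when $G=0$).

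Finally, if $\kappa'<\omega$ and, when $G\neq 0$, also $\kappa'<p^s-\omega+t$, then every power of $f_j$ appearing in the preceding display strictly exceeds $\kappa'$. Cancelling $f_j^{\kappa'}$, legitimate since $f_j$ is monic and $\overline{\mathcal{R}_2}[x]$ is a domain, gives
\[
1 \;=\; f_j^{\omega-\kappa'}h_1+f_j^{p^s-\omega+t-\kappa'}G\,c-f_j^{p^s-\kappa'}Q
\]
for some $Q\in\overline{\mathcal{R}_2}[x]$; reducing modulo $f_j$ makes the right-hand side vanish, contradicting $\deg f_j\ge 1$. Hence $\kappa\ge\omega$ when $G=0$, and $\kappa\ge\min\{\omega,\,p^s-\omega+t\}$ when $G\neq 0$, matching the upper bounds. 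The main obstacle is the clean Teichm\"uller separation in the third paragraph: without the characteristic-$p$ identification $\mathcal{R}_2\cong\overline{\mathcal{R}_2}[\gamma]/\langle\gamma^{2}\rangle$, the coefficients of $f_j$ themselves would inject mixed $\gamma$-contributions and obstruct the tidy extraction of the two governing congruences in $\overline{\mathcal{R}_2}[x]$.
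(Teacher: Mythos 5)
Your proof is correct and follows essentially the same route as the paper's: both get the upper bounds from the identities $\gamma\,(f_j(x)^{\omega}+\gamma f_j(x)^tG(x))=\gamma f_j(x)^{\omega}$ and $f_j(x)^{p^s-\omega}(f_j(x)^{\omega}+\gamma f_j(x)^tG(x))=\gamma f_j(x)^{p^s-\omega+t}G(x)$, and both get the matching lower bound by expanding $\gamma f_j(x)^{\kappa}$ as a multiple of the generator, reducing modulo $\gamma$ to force the $\gamma$-free part of the multiplier into $\langle f_j(x)^{p^s-\omega}\rangle,$ and then comparing $f_j$-valuations. The only (cosmetic) difference is that you package the multiplier as $h_0+\gamma h_1$ over the residue field via $\mathcal{R}_2\simeq\overline{\mathcal{R}_2}[\gamma]/\langle\gamma^2\rangle,$ whereas the paper uses the $f_j$-adic expansion with Teichm\"{u}ller coefficients from equation \eqref{rep}.
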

\begin{proof}  As $f_j(x)^{p^s}=0$ in $\mathcal{K}_{j},$ equation \eqref{rep} can be rewritten as \vspace{-2mm}\begin{equation*}\gamma f_j(x)^{\kappa}=\sum\limits_{i=0}^{p^s-\omega-1}A_i(x)f_j(x)^{i+\omega}+\gamma \sum\limits_{i=0}^{p^s-1}A_i(x)f_j(x)^{i+t}\\ G(x)+\gamma \sum\limits_{\ell=0}^{p^s-1}B_\ell(x) f_j(x)^{\ell+\omega}.
\vspace{-4mm}\end{equation*}This gives $ \sum\limits_{i=0}^{p^s-\omega-1}\overline{A_i(x)}~\overline{f_j(x)}^{i+\omega}=0$ in $\overline{\mathcal{R}_2}[x]/\langle\overline{f_j(x)}^{p^s}\rangle.$ This implies that $ \overline{A_i(x)}=0,$ which further implies that $A_i(x)=0$ for $0 \leq i \leq p^s-\omega-1.$  This gives $\gamma f_j(x)^{\kappa}=\gamma \sum\limits_{i=p^s-\omega}^{p^s-1}A_i(x)f_j(x)^{i+t}G(x)+\gamma \sum\limits_{\ell=0}^{p^s-1}B_\ell(x) f_j(x)^{\ell+\omega}.$ From this, we get $\kappa \geq \omega $ when $G(x)=0$  and $\kappa \geq \min\{\omega,p^s-\omega+t\}$ when $G(x) \neq 0.$ Moreover, when $G(x)\neq 0,$ as $\gamma f_j(x) ^{p^s-\omega+ t}= f_j(x) ^{p^s-\omega} \{f_j(x) ^{\omega}+\gamma f_j(x) ^t G(x)\},$ we get  $p^s-\omega+ t \geq \kappa .$ From this, the desired result follows.\vspace{-2mm}\end{proof}

In the following proposition, we determine the integer $\kappa$ when $\text{char }\mathcal{R}_2=p^2$ and $\beta=0.$
\vspace{-2mm}\begin{prop}\label{charp2} Let $\text{char }\mathcal{R}_2=p^2$ and $\beta=0,$  and  let us write $\gamma (M_j(x)-G(x))=\gamma f_j(x)^{\delta}A_G(x),$ where $0 \leq \delta < p^s-1$ and $A_G(x)$ is either 0 or a unit in $\mathcal{K}_{j}.$ Then we have 
\vspace{-2mm}\begin{equation*}\kappa=\left\{ \begin{array}{ll} \min\{\omega, p^{s-1}\}  & \text{if }G(x)=0;\\ \min\{\omega, p^s-\omega+t,p^{s-1}\} & \text{if } G(x)\neq 0 \text{ and }\omega \neq p^s-p^{s-1}+t; \\
\min\{\omega, p^{s-1}+\delta\}& \text{if }G(x)\neq 0, \omega=p^s-p^{s-1}+t, A_G(x) \neq 0 \text{ and }\delta < p^s-p^{s-1} ;\\
  \omega & \text{if } \omega =p^s-p^{s-1}+t  \text{ with either }  A_G(x)=0  \text{ or }  A_G(x) \neq 0 \text{ and } \delta \geq p^s-p^{s-1}. \end{array}  \right. \end{equation*}
\vspace{-5mm}\end{prop}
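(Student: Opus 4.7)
The strategy is to analyse equation~\eqref{rep} directly, following the approach used in Proposition~\ref{charp} but now substituting the nontrivial relation $f_j(x)^{p^s}=-\gamma f_j(x)^{p^{s-1}}M_j(x)$ that holds in $\mathcal{K}_j$ when $\mathrm{char}\,\mathcal{R}_2=p^2$ (Lemma~\ref{nilred}(b)). I would write $Q(x):=\sum_{i=0}^{p^s-1}A_i(x)f_j(x)^i+\gamma\sum_{\ell=0}^{p^s-1}B_\ell(x)f_j(x)^\ell$ in its canonical form with $A_i(x),B_\ell(x)\in\mathcal{P}_{d_j}(\mathcal{T}_2)$, expand $(f_j(x)^\omega+\gamma f_j(x)^tG(x))\,Q(x)$, and use $\gamma^2=0$.

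The first step would be to split $f_j(x)^\omega\sum_iA_i(x)f_j(x)^i$ into the piece with $i+\omega<p^s$ (which stays in the non-$\gamma$ part) and the piece with $i+\omega\geq p^s$ (which becomes $-\gamma M_j(x)f_j(x)^{i+\omega-p^s+p^{s-1}}A_i(x)$). Comparing non-$\gamma$ parts with the left-hand side $\gamma f_j(x)^\kappa$ and using uniqueness of the canonical form modulo $\gamma$ forces $A_i(x)=0$ for $0\leq i\leq p^s-\omega-1$. Re-indexing the remaining $A_i$'s by $j=i-(p^s-\omega)$ and setting $h'(x):=\sum_{j=0}^{\omega-1}A_{p^s-\omega+j}(x)f_j(x)^j$, $h_1(x):=\sum_\ell B_\ell(x)f_j(x)^\ell$, the residual equation becomes
\begin{equation*}
\gamma f_j(x)^\kappa=\gamma\,P(x)\,h'(x)+\gamma f_j(x)^\omega h_1(x),\qquad P(x):=-M_j(x)f_j(x)^{p^{s-1}}+G(x)f_j(x)^{p^s-\omega+t}.
\end{equation*}
The problem then reduces to computing the minimum $f_j$-order that can appear on the right-hand side.

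The case analysis is driven by the structure of $P(x)$. When $G(x)=0$, $P(x)=-M_j(x)f_j(x)^{p^{s-1}}$ with $M_j(x)$ a unit, and combining with the $\gamma f_j(x)^\omega h_1(x)$ contribution gives $\kappa=\min\{\omega,p^{s-1}\}$. When $G(x)\neq 0$ and $\omega\neq p^s-p^{s-1}+t$, the two monomials in $P(x)$ have distinct $f_j$-orders; factoring out the smaller one exposes a unit bracket (coming from $M_j(x)$ if $\omega<p^s-p^{s-1}+t$, from $G(x)$ if $\omega>p^s-p^{s-1}+t$), so the minimum $f_j$-order reached by $\gamma P(x)h'(x)$ is $\min\{p^{s-1},p^s-\omega+t\}$, yielding $\kappa=\min\{\omega,p^s-\omega+t,p^{s-1}\}$. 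For the upper bound in each regime I would exhibit an explicit witness: $\gamma f_j(x)^\omega=\gamma\cdot(f_j(x)^\omega+\gamma f_j(x)^tG(x))$, together with $f_j(x)^{p^s-\omega}(f_j(x)^\omega+\gamma f_j(x)^tG(x))=-\gamma f_j(x)^{p^{s-1}}M_j(x)+\gamma f_j(x)^{p^s-\omega+t}G(x)$, and its obvious unit rescalings.

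The main obstacle is the degenerate case $\omega=p^s-p^{s-1}+t$, where the two terms of $P(x)$ have the same $f_j$-order and can interfere; here the factored form is $P(x)=-f_j(x)^{p^{s-1}}(M_j(x)-G(x))$, so the usable information is precisely the hypothesis $\gamma(M_j(x)-G(x))=\gamma f_j(x)^\delta A_G(x)$. Substituting gives $\gamma P(x)h'(x)=-\gamma f_j(x)^{p^{s-1}+\delta}A_G(x)h'(x)$, and I would conclude: if $A_G(x)=0$ or if $A_G(x)\neq 0$ with $\delta\geq p^s-p^{s-1}$ (so that $\gamma f_j(x)^{p^{s-1}+\delta}=0$ in $\mathcal{K}_j$ because $\gamma\cdot f_j(x)^{p^s}=0$), the first summand vanishes and the only contribution is from $h_1(x)$, giving $\kappa=\omega$; otherwise, when $A_G(x)$ is a unit and $\delta<p^s-p^{s-1}$, taking $h'(x)=1$ yields a witness of order $p^{s-1}+\delta$, and the analysis shows no smaller order is attainable, so $\kappa=\min\{\omega,p^{s-1}+\delta\}$. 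The matching lower bounds in each sub-case come from comparing $f_j$-orders term-by-term in $\gamma[P(x)h'(x)+f_j(x)^\omega h_1(x)]$, which is the most delicate bookkeeping in the proof.
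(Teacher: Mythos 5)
Your proposal is correct and follows essentially the same route as the paper's proof: substitute $f_j(x)^{p^s}=-\gamma f_j(x)^{p^{s-1}}M_j(x)$ into equation~\eqref{rep}, force $A_i(x)=0$ for $0\leq i\leq p^s-\omega-1$ by reducing modulo $\gamma$, and then compare the $f_j$-orders of the two resulting $\gamma$-terms, with the witness $f_j(x)^{p^s-\omega}\big(f_j(x)^{\omega}+\gamma f_j(x)^tG(x)\big)$ supplying the upper bounds and the hypothesis $\gamma(M_j(x)-G(x))=\gamma f_j(x)^{\delta}A_G(x)$ resolving the degenerate case $\omega=p^s-p^{s-1}+t$. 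Your packaging of the two monomials into a single polynomial $P(x)$ is only a notational variant of the paper's displayed equation \eqref{em}.
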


\begin{proof}  Since $f_j(x)^{p^s}=-\gamma f_j(x)^{p^{s-1}}M_j(x),$ equation \eqref{rep} can be rewritten as \small \vspace{-2mm}\begin{equation*} \gamma f_j(x)^{\kappa}  =\sum\limits_{i=0}^{p^s-\omega-1}A_i(x)f_j(x)^{i+\omega}-\gamma \sum\limits_{i=p^s-\omega}^{p^s-1}A_i(x)f_j(x)^{i+\omega-p^s+p^{s-1}} M_j(x) +\gamma \sum\limits_{i=0}^{p^s-1}A_i(x)f_j(x)^{i+t}G(x)+ \gamma \sum\limits_{\ell=0}^{p^s-1}B_\ell(x) f_j(x)^{\ell+\omega}.\vspace{-2mm}\end{equation*} \normalsize This gives $ \sum\limits_{i=0}^{p^s-\omega-1}\overline{A_i(x)}~\overline{f_j(x)}^{i+\omega}=0$ in $\overline{\mathcal{R}_2}[x]/\langle\overline{f_j(x)}^{p^s}\rangle.$ This implies that $ \overline{A_i(x)}=0,$ which further implies that $A_i(x)=0$ for $0 \leq i \leq p^s-\omega-1.$  From this, we obtain \vspace{-2mm}\begin{equation}\label{em}\gamma f_j(x)^{\kappa}=-\gamma \sum\limits_{i=p^s-\omega}^{p^s-1}A_i(x)f_j(x)^{i+\omega-p^s+p^{s-1}} M_j(x)+\gamma \sum\limits_{i=p^s-\omega}^{p^s-1}A_i(x)f_j(x)^{i+t}G(x)+\gamma \sum\limits_{\ell=0}^{p^s-1}B_\ell(x) f_j(x)^{\ell+\omega}. \vspace{-2mm}\end{equation} 

When $G(x)=0,$ by \eqref{em}, we get $\kappa \geq \min \{\omega, p^{s-1}\}.$ Further, as $\gamma f_j(x)^{p^{s-1}}M_j(x)=-f_j(x)^{p^s}$ and $\kappa \leq \omega,$  we get   $\kappa = \min \{\omega, p^{s-1}\}.$ 

From now on, throughout the proof, we  assume that $G(x)$ is a unit in $\mathcal{K}_j.$ Here we shall consider the following two cases separately: $p^s-p^{s-1}+t-\omega \neq 0$ and $p^s-p^{s-1}+t-\omega=0.$

First let $p^s-p^{s-1}+t-\omega \neq 0.$ In this case,  we  note that  $\gamma( -f_j(x)^{p^{s-1}} M_j(x)+ f_j(x)^{p^s-\omega+t}G(x))= f_j(x)^{p^s}+\gamma f_j(x)^{p^s-\omega+t}G(x)= f_j(x) ^{p^s-\omega} \{f_j(x) ^{\omega}+\gamma f_j(x) ^t G(x)\} .$ From this and using  the fact that $p^s-p^{s-1}-\omega+t \neq 0,$ we get  $\kappa \leq \min \{p^s-\omega+t, p^{s-1}\}.$  From this and using \eqref{em}, we obtain $\kappa= \min\{\omega,p^s-\omega+t, p^{s-1}\}.$

 Next suppose that $\omega=p^s-p^{s-1}+t .$ In this case, \eqref{em} can be rewritten as
\vspace{-2mm}\begin{equation}\label{em1}\gamma f_j(x)^{\kappa}=-\gamma \sum\limits_{i=p^s-\omega}^{p^s-1}A_i(x)f_j(x)^{i+\omega-p^s+p^{s-1}+\delta}A_G(x) +\gamma \sum\limits_{\ell=0}^{p^s-1}B_\ell(x) f_j(x)^{\ell+\omega}.\vspace{-2mm} \end{equation} 
By \eqref{em1}, we see that $\kappa =\omega$ when $A_G(x)=0.$

Next let $A_G(x)$ be a unit in $\mathcal{K}_j.$ When $\delta \geq p^s-p^{s-1},$  \eqref{em1} becomes $\gamma f_j(x)^{\kappa}=\gamma \sum\limits_{k=0}^{p^s-1}B_k(x) f_j(x)^{k+\omega},$ which gives $\kappa = \omega.$ On the other hand,  we see that $\gamma f_j(x)^{p^{s-1}+\delta} A_G(x)=\gamma f_j(x)^{p^{s-1}} (M_j(x)-G(x)) ,$ which gives   $\kappa \leq p^{s-1}+\delta$ 
when $ \delta< p^s-p^{s-1}.$   From this and using \eqref{em1}, we get $\kappa = \min\{\omega, p^{s-1}+\delta \}$ when $\delta < p^s-p^{s-1}.$ \vspace{-1mm}\end{proof}

In the following theorem, we determine cardinalities of all ideals of $\mathcal{K}_{j}$ when $\beta=0.$
\vspace{-2mm}\begin{thm}\label{thm3} Suppose that $\beta=0.$ Let $\mathcal{I}$ be an ideal of $\mathcal{K}_j$ (as determined in Theorem \ref{thm2}). \begin{enumerate} \vspace{-2mm}\item[(a)] If $\mathcal{I}=\{0\},$ then    $|\mathcal{I}|=1.$ 
\vspace{-2mm}\item[(b)] If $\mathcal{I}=\mathcal{K}_{j},$ then  $|\mathcal{I}|=p^{2md_jp^s}.$
\vspace{-2mm}\item[(c)] If $\mathcal{I}=\langle\gamma f_j(x) ^{\tau}\rangle$ is of Type II, then $|\mathcal{I}|=p^{md_j(p^s-\tau)}.$
\vspace{-2mm}\item[(d)] Let $\mathcal{I}=\langle f_j(x) ^{\omega}+\gamma f_j(x) ^{t}G(x)\rangle$ be of Type III. Let us write $\gamma (M_j(x)-G(x)) =\gamma f_j(x)^{\delta} A_G(x),$ where $0 \leq \delta < p^s$ and $A_G(x)$ is either 0 or a unit in $\mathcal{K}_{j}.$ When $\text{char }\mathcal{R}_2=p,$  we have
\vspace{-2mm}\begin{equation*}|\mathcal{I}|=\left\{\begin{array}{ll} p^{2md_j(p^s-\omega)} & \text{if } \text{either } G(x)=0 \text{ or }   G(x)\neq0 \text{ and }p^s-2\omega+t \geq 0;\\
p^{md_j(p^s-t)} & \text{if }  G(x)\neq0 \text{ and } p^s-2\omega+t < 0.
\end{array}\right.\vspace{-2mm}\end{equation*}
When $\text{char }\mathcal{R}_2=p^2,$ we have
\vspace{-2mm}\begin{equation*}|\mathcal{I}|=\left\{\begin{array}{ll} p^{2md_j(p^s-\omega)} & \text{if } \text{either }G(x)=0, \omega \leq p^{s-1} \text{ or } G(x)\neq0, p^s-2\omega+t \geq 0, \omega \leq p^{s-1}, \\& \omega \neq  p^s-p^{s-1} +t   \text{ or }   A_G(x)=0,  \omega =  p^s-p^{s-1} +t \text{ or }   A_G(x)\neq 0, \\&  \omega =p^s-p^{s-1} +t,  \delta \geq p^s -p^{s-1} \text{ or }   A_G(x)\neq0, \omega =   p^s-p^{s-1} +t \leq p^{s-1}+\delta  ;\\
p^{md_j(p^s-t)} & \text{if }  G(x)\neq 0,  p^s-2\omega+t \leq 0,  p^s- p^{s-1}-\omega +t <0;\\
 p^{md_j(2p^s-\omega-p^{s-1})} & \text{if } G(x)=0, \omega > p^{s-1}  \text{ or }  G(x)\neq0,  \omega \geq p^{s-1}, p^s- p^{s-1}-\omega +t >0;\\p^{md_j(2p^s-\omega-p^{s-1}-\delta)}&  \text{if }   A_G(x)\neq0,  \omega=p^s-p^{s-1} +t, \omega> p^{s-1}+\delta.
\end{array}\right.\vspace{-2mm}\end{equation*}

\vspace{-2mm}\item[(e)] If $\mathcal{I}=\langle f_j(x) ^{\omega}+\gamma f_j(x) ^{t}G(x), \gamma f_j(x) ^{\mu}\rangle$ is of Type IV, then $|\mathcal{I}|= p^{md_j(2p^s-\mu-\omega)}.$
\end{enumerate}\end{thm}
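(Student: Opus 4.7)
The plan is to compute $|\mathcal{I}|$ by invoking Lemma \ref{card}, which factors $|\mathcal{I}| = |\text{Res}_{\gamma}(\mathcal{I})|\,|\text{Tor}_{\gamma}(\mathcal{I})|$. The two trivial cases (a) and (b) follow immediately from Proposition \ref{pr1} applied to $\mathcal{R}_2$ (so $|\mathcal{R}_2| = p^{2m}$) together with the fact that $\mathcal{K}_j$ is a free $\mathcal{R}_2$-module of rank $d_j p^s$. For case (c), the ideal $\langle \gamma f_j(x)^\tau\rangle$ is contained in $\langle \gamma\rangle$, so $\text{Res}_\gamma(\mathcal{I}) = \{0\}$, while $\text{Tor}_\gamma(\mathcal{I}) = \langle \overline{f_j(x)}^\tau\rangle$ in the chain ring $\overline{\mathcal{R}_2}[x]/\langle \overline{f_j(x)}^{p^s}\rangle$, whose cardinality $p^{md_j(p^s - \tau)}$ is read off from Proposition \ref{pr1}.

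For parts (c), (d), (e), the key observation I would establish first is the general identity: if $\mathcal{I} \subseteq \mathcal{K}_j$ is any ideal not contained in $\langle\gamma\rangle$, then $\text{Res}_\gamma(\mathcal{I})$ equals the ideal of $\overline{\mathcal{R}_2}[x]/\langle \overline{f_j(x)}^{p^s}\rangle$ generated by $\overline{f_j(x)}^{\omega}$, where $\omega$ is the smallest exponent appearing in any $f_j$-adic expansion of a generator with non-zero residue part; and $\text{Tor}_\gamma(\mathcal{I})$ equals $\langle \overline{f_j(x)}^\kappa\rangle$, where $\kappa$ is the smallest non-negative integer with $\gamma f_j(x)^\kappa \in \mathcal{I}$. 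Once this is set up, the structure theorem (Theorem \ref{thm2}) immediately gives the correct $\omega$ and $\kappa$ in each case, and a direct count in the chain ring $\overline{\mathcal{R}_2}[x]/\langle\overline{f_j(x)}^{p^s}\rangle$ (again using Proposition \ref{pr1}) shows $|\text{Res}_\gamma(\mathcal{I})| = p^{md_j(p^s-\omega)}$ and $|\text{Tor}_\gamma(\mathcal{I})| = p^{md_j(p^s-\kappa)}$, so $|\mathcal{I}| = p^{md_j(2p^s-\omega-\kappa)}$.

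This instantly resolves case (e), where $\kappa$ is forced to equal $\mu$ (since $\gamma f_j(x)^\mu$ lies in the generator set and $\mu < \kappa_{\text{Type III}}$ by the definition of Type IV), giving the stated $p^{md_j(2p^s-\omega-\mu)}$. For case (d), where $\mathcal{I}$ is Type III so $\kappa$ is genuinely the invariant of Theorem \ref{thm2}, I would substitute the explicit values of $\kappa$ provided by Proposition \ref{charp} (when $\text{char }\mathcal{R}_2 = p$) and Proposition \ref{charp2} (when $\text{char }\mathcal{R}_2 = p^2$). Each formula listed for $|\mathcal{I}|$ corresponds to one subcase of $\kappa$: for instance, when $\text{char }\mathcal{R}_2=p$ and $G(x)\neq 0$ with $p^s-2\omega+t < 0$, Proposition \ref{charp} gives $\kappa = p^s-\omega+t$, producing $|\mathcal{I}| = p^{md_j(p^s-t)}$; in the $\text{char }\mathcal{R}_2=p^2$ situation, each of the four listed formulas is recovered by plugging in the corresponding value of $\kappa$ from Proposition \ref{charp2}.

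The main obstacle will be the careful bookkeeping in part (d) under characteristic $p^2$: there are four parameter regimes in Proposition \ref{charp2}, and matching them with the four displayed size formulas requires verifying, in each regime, that the inequalities defining it translate correctly into the inequalities stated in the theorem (in particular, disentangling the three separate mins $\omega$, $p^s-\omega+t$, $p^{s-1}$, and $p^{s-1}+\delta$ and understanding which one achieves the minimum). No genuinely new idea is needed beyond Lemma \ref{card} and the formulas for $\kappa$; the routine calculation should be organized as a table cross-referencing the cases of Proposition \ref{charp2} with the formulas in part (d).
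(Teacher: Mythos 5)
Your proposal is correct and follows essentially the same route as the paper: both compute $|\mathcal{I}|$ via Lemma \ref{card} as $|\text{Res}_{\gamma}(\mathcal{I})|\,|\text{Tor}_{\gamma}(\mathcal{I})|$, identify $\text{Res}_{\gamma}(\mathcal{I})=\langle\overline{f_j(x)}^{\omega}\rangle$ and $\text{Tor}_{\gamma}(\mathcal{I})=\langle\overline{f_j(x)}^{\kappa}\rangle$ (with $\kappa=\mu$ in the Type IV case), read off cardinalities from the chain-ring structure of $\overline{\mathcal{R}_2}[x]/\langle\overline{f_j(x)}^{p^s}\rangle$ via Proposition \ref{pr1}, and substitute the values of $\kappa$ from Propositions \ref{charp} and \ref{charp2}. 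The case bookkeeping you defer in part (d) is exactly the step the paper also leaves implicit.
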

\vspace{-4mm}\begin{proof} To prove this, we see, by Lemma \ref{card}, that $|\mathcal{I}|=|Res_{\gamma}(\mathcal{I})||Tor_{\gamma}(\mathcal{I})|.$ So we need to determine cardinalities of  $Res_{\gamma}(\mathcal{I})$ and $ Tor_{\gamma}(\mathcal{I}),$ which are ideals of the quotient ring $\overline{\mathcal{R}_2}[x]/\langle\overline{ f_j(x)}^{p^s}\rangle.$  To do this, we first note that the nilpotency index of $\overline{f_j(x)}$ in $\overline{\mathcal{R}_2}[x]/\langle \overline{ f_j(x)}^{p^s}\rangle$ is $p^s.$ Further,  by Proposition \ref{pr1}, we observe that $\overline{\mathcal{R}_2}[x]/\langle \overline{ f_j(x)}^{p^s}\rangle$ is a finite commutative chain ring with unity and all its ideals are given by $\langle \overline{f_j(x)}^{i}\rangle$ for $0 \leq i \leq p^s.$ We also observe that  the residue field of  $\overline{\mathcal{R}_2}[x]/\langle \overline{ f_j(x)}^{p^s}\rangle$ is of order $p^{md_j}.$  This, by Proposition \ref{pr1} again, implies that  \vspace{-1mm}\begin{equation}\label{eee}| \langle \overline{f_j(x)}^{i}\rangle|=p^{md_j(p^s-i)}\text{  for }0 \leq i \leq p^s.\vspace{-2mm}\end{equation}
\begin{enumerate}\vspace{-3mm} \item[(a)] If $\mathcal{I}=\{0\},$ then   $\text{Res}_{\gamma}(\mathcal{I})=Tor_{\gamma} ({\mathcal{I}})=\{0\},$ which gives $|\mathcal{I}|=1.$ 
\vspace{-2mm}\item[(b)] If $\mathcal{I}=\mathcal{K}_{j},$ then   $\text{Res}_{\gamma}(\mathcal{I})=Tor_{\gamma}({\mathcal{I}})=\langle1\rangle=\overline{\mathcal{R}_2}[x]/\langle\overline{ f_j(x)}^{p^s}\rangle.$ From this and using \eqref{eee}, we get $|\mathcal{I}|=p^{2md_jp^s}.$  
\vspace{-6mm}\item[(c)] If $\mathcal{I}=\langle\gamma f_j(x)^{\tau }\rangle$ is of Type II, then $\text{Res}_{\gamma}(\mathcal{I})=\{0\}$  and   $Tor_{\gamma}({\mathcal{I}})=\langle\overline{ f_j(x)}^\tau \rangle.$ From this and using \eqref{eee}, we obtain $|\mathcal{I}|=p^{md_j(p^s-\tau)}.$
\vspace{-2mm}\item[(d)] If $\mathcal{I}=\langle f_j(x)^{\omega}+\gamma f_j(x)^{t}G(x)\rangle$ is of Type III, then it is easy to see that  $\text{Res}_{\gamma}(\mathcal{I})=\langle \overline{f_j(x)}^{\omega}\rangle$ and  $Tor_{\gamma}({\mathcal{I}})=\langle\overline{ f_j(x)}^{\kappa}\rangle .$ Now by applying Propositions \ref{charp} and \ref{charp2} and using \eqref{eee}, part (d) follows.
\vspace{-2mm}\item[(e)] If $\mathcal{I}=\langle f_j(x)^{\omega}+\gamma f_j(x)^{t}G(x), \gamma f_j(x)^{\mu} \rangle$ is of Type IV, then   $\text{Res}_{\gamma}(\mathcal{I})=\langle\overline{f_j(x)}^{\omega}\rangle$ and $Tor_{\gamma}({\mathcal{I}})= \langle\overline{ f_j(x)}^{\mu}\rangle.$  From this and using \eqref{eee}, we get $|\mathcal{I}|= p^{md_j(2p^s-\mu-\omega)}.$ 
\end{enumerate}
\vspace{-4mm}\end{proof}
\vspace{-5mm}In the following theorem,  we  determine the orthogonal complement of each ideal of  $\mathcal{K}_j$ when $\beta=0.$ 
\vspace{-2mm}\begin{thm}\label{dual} Suppose that $\beta=0.$ Let $\mathcal{I}$ be an ideal of $\mathcal{K}_j$ (as determined in Theorem \ref{thm2}).
\begin{enumerate}\vspace{-2mm} \item[(a)] If $\mathcal{I}=\{0\},$ then  $\mathcal{I}^{\perp}=\widehat{\mathcal{K}_{j}}=\mathcal{R}_2[x]/\langle k_j^*(x)\rangle.$ 
\vspace{-2mm}\item[(b)] If $\mathcal{I}=\mathcal{K}_{j},$ then   $\mathcal{I}^{\perp}=\{0\}.$ 
\vspace{-2mm}\item[(c)] If $\mathcal{I}=\langle\gamma f_j(x) ^{\tau}\rangle$ is of Type II, then $\mathcal{I}^{\perp}=\langle f_j^*(x) ^{p^s-\tau},\gamma \rangle.$ 
\vspace{-2mm}\item[(d)] Let $\mathcal{I}=\langle f_j(x) ^{\omega}+\gamma f_j(x) ^{t}G(x)\rangle$ be of Type III. Let us write $\gamma (M_j(x)-G(x)) =\gamma f_j(x)^{\delta} A_G(x),$ where $0 \leq \delta < p^s$ and $A_G(x)$ is either 0 or a unit in $\mathcal{K}_{j}.$ 
\vspace{-2mm}\begin{enumerate}\item[(i)] When $\text{char }\mathcal{R}_2=p,$  we have 
\vspace{-2mm}\begin{equation*} \mathcal{I}^{\perp}=\left\{ \begin{array}{ll} 
\langle f_j^*(x) ^{p^s-\omega}\rangle  \text{ if } G(x)=0;\vspace{1mm}\\
\langle f_j^*(x) ^{p^s-\omega}-\gamma x^{d_j\omega-d_jt-\text{deg }G(x)}f_j^*(x) ^{p^s-2\omega+t}G^*(x)\rangle  \text{ if } G(x) \neq 0 \text{ and } p^s-2\omega+t \geq 0 ;\vspace{1mm}\\
\langle f_j^*(x) ^{\omega-t}-\gamma x^{d_j\omega-d_jt-\text{deg }G(x)}G^*(x), \gamma f_j^*(x) ^{p^s-\omega} \rangle  \text{ if } G(x) \neq 0 \text{ and } p^s-2\omega+t < 0.\vspace{1mm}\end{array}\right.
\vspace{-2mm}\end{equation*}
\vspace{-4mm}\item[(ii)] Let $\text{char }\mathcal{R}_2=p^2.$ When $G(x) \neq 0,$ $t =\omega-p^s+p^{s-1}$ with either $A_G(x)=0$ or $A_G(x)\neq 0$ and  $\delta\geq  p^s-p^{s-1},$ we have $\mathcal{I}=\langle f_j(x) ^{\omega}+\gamma f_j(x) ^{\omega-p^s+p^{s-1}}M_j(x)\rangle.$ Furthermore, we have 
\vspace{-2mm}\begin{equation*} \mathcal{I}^{\perp}=\left\{ \begin{array}{ll} 
\langle f_j^*(x)^{p^s-\omega}+ \gamma x^{d_jp^s-d_jp^{s-1}-\text{deg }M_j(x)} f_j^*(x)^{p^{s-1}-\omega}M_j^*(x)\rangle  \text{ if }G(x)=0 \text{ and } \omega \leq p^{s-1};\vspace{1mm}\\
\langle f_j^*(x)^{p^s-p^{s-1}}+ \gamma x^{d_jp^s-d_jp^{s-1}-\text{deg }M_j(x)} M_j^*(x), \gamma f_j^*(x)^{p^s-\omega}\rangle  \text{ if } G(x)=0  \text{ and } \omega > p^{s-1};\vspace{1mm}\\
\langle f_j^*(x)^{p^s-p^{s-1}} +\gamma x^{d_jp^s-d_jp^{s-1}-\text{deg }M_j(x)} M_j^*(x)-\gamma x^{d_j\omega-d_jt-\text{deg }G(x)} f_j^*(x)^{p^s+t-\omega-p^{s-1}}G^*(x), \\ \gamma f_j^*(x)^{p^s-\omega} \rangle \text{ if } G(x) \neq 0,  p^s-p^{s-1}+t-\omega> 0 \text{ and } \omega > p^{s-1} ;\vspace{1mm}\\
\langle f_j^*(x)^{p^s-\omega} +\gamma x^{d_jp^s-d_jp^{s-1}-\text{deg }M_j(x)} f_j^*(x)^{p^{s-1}-\omega} M_j^*(x)-\gamma x^{d_j\omega-d_jt-\text{deg }G(x)} f_j^*(x)^{p^s+t-2\omega}G^*(x)\rangle \\ \text{ if } G(x) \neq 0, p^s-p^{s-1}+t-\omega> 0 \text{ and } \omega \leq p^{s-1};\vspace{1mm}\\
\langle f_j^*(x)^{p^s-\omega} +\gamma x^{d_jp^s-d_jp^{s-1}-\text{deg }M_j(x)} f_j^*(x)^{p^{s-1}-\omega} M_j^*(x)-\gamma x^{d_j\omega-d_jt-\text{deg }G(x)} f_j^*(x)^{p^s+t-2\omega}G^*(x)\rangle \\ \text{ if }G(x) \neq 0, p^s-p^{s-1}+t-\omega< 0 \text{ and } p^s-2\omega+t \geq 0;\vspace{1mm}\\
\langle f_j^*(x)^{\omega-t} +\gamma x^{d_jp^s-d_jp^{s-1}-\text{deg }M_j(x)} f_j^*(x)^{p^{s-1} +\omega-t-p^s} M_j^*(x)-\gamma x^{d_j\omega-d_jt-\text{deg }G(x)} G^*(x), \\\gamma f_j^*(x)^{p^s-\omega} \rangle \text{ if }G(x) \neq 0, p^s-p^{s-1}+t-\omega< 0 \text{ and } p^s-2\omega+t < 0;\vspace{1mm}\\
\langle f_j^*(x)^{p^s-\omega}\rangle  \text{ if } G(x) \neq 0, \omega=p^s-p^{s-1}+t  \text{ with either }A_G(x)=0  \text{ or }  A_G(x) \neq 0, \delta\geq  p^s-p^{s-1};\vspace{1mm}\\
\langle f_j^*(x)^{p^s-\omega}+\gamma x^{d_jp^s-d_jp^{s-1}-d_j\delta-\text{deg }A_G(x)}f_j^*(x)^{p^{s-1}-\omega+\delta}A_G^*(x)\rangle  \text{ if }G(x) \neq 0, t=\omega-p^s+p^{s-1},\\ A_G(x) \neq 0  \text{ and }   \omega - p^{s-1} \leq \delta < p^s -p^{s-1};\\
\langle f_j^*(x)^{p^s-p^{s-1}-\delta}+\gamma x^{d_jp^s-d_jp^{s-1}-d_j\delta-\text{deg }A_G(x)}A_G^*(x), \gamma f_j^*(x)^{p^s-\omega}\rangle  \text{ if } G(x) \neq 0, t=\omega-p^s+p^{s-1},\\ A_G(x)\neq 0  \text{ and }  \delta<  \omega - p^{s-1}. \end{array}\right.
\end{equation*}\end{enumerate}
\vspace{-4mm}\item[(e)] Let $\mathcal{I}=\langle f_j(x) ^{\omega}+\gamma f_j(x)^tG(x), \gamma f_j(x) ^{\mu}\rangle$ be of Type IV. Let us write $\gamma (M_j(x)-G(x)) =\gamma f_j(x)^{\delta} A_G(x),$ where $0 \leq \delta < p^s$ and $A_G(x)$ is either 0 or a unit in $\mathcal{K}_{j}.$ 
\vspace{-2mm}\begin{enumerate}\item[(i)]  When $\text{char }\mathcal{R}_2=p,$  we have\vspace{-2mm} \begin{equation*} \mathcal{I}^{\perp}=\left\{\begin{array}{ll}
\langle f_j^*(x)^{p^s-\mu}, \gamma f_j^*(x)^{p^s-\omega}\rangle  \text{ if }  G(x)=0;\vspace{1mm}\\
\langle f_j^*(x)^{p^s-\mu}-\gamma x^{d_j\omega-d_jt-\text{deg }G(x)}f_j^*(x)^{p^s-\mu-\omega+t}G^*(x), \gamma f_j^*(x)^{p^s-\omega}\rangle  \text{ if } G(x) \neq 0;\vspace{1mm}\\
\end{array}\right. \vspace{-3mm}\end{equation*}
\vspace{-4mm}\item[(ii)] Let $\text{char }\mathcal{R}_2=p^2.$ When $G(x) \neq 0,$ $t =\omega-p^s+p^{s-1}$ with either $A_G(x)=0$ or $A_G(x)\neq 0$ and  $\delta\geq  p^s-p^{s-1},$ we have $\mathcal{I}=\langle f_j(x) ^{\omega}+\gamma f_j(x) ^{\omega-p^s+p^{s-1}}M_j(x), \gamma f_j(x) ^{\mu}\rangle.$ Furthermore,  we have 
\vspace{-2mm} \begin{equation*} \mathcal{I}^{\perp}=\left\{\begin{array}{ll}
\langle f_j^*(x)^{p^s-\mu}, \gamma f_j^*(x)^{p^s-\omega}\rangle  \text{ if }  G(x)=0\text{ and } p^s-p^{s-1}-\omega+\mu\leq 0;\vspace{1mm}\\
\langle f_j^*(x)^{p^s-\mu}+ \gamma x^{d_jp^s-d_jp^{s-1}-\text{deg }M_j(x)} f_j^*(x)^{p^{s-1}-\mu}M_j^*(x), \gamma f_j^*(x)^{p^s-\omega}\rangle  \text{ if } G(x)=0 \text{ and }\\ p^s-p^{s-1}-\omega+\mu> 0;\vspace{1mm}\\
\langle f_j^*(x)^{p^s-\mu}+\gamma x^{d_jp^s-d_jp^{s-1}-\text{deg }M_j(x)}f_j^*(x)^{p^{s-1}-\mu}M_j^*(x)-\gamma x^{d_j\omega-d_jt-\text{deg }G(x)}f_j^*(x)^{p^{s}-\mu+t-\omega}G^*(x),\\  \gamma f_j^*(x)^{p^s-\omega} \rangle  \text{ if } G(x) \neq 0 \text{ and } p^s-p^{s-1}+t-\omega \neq 0;\vspace{1mm}\\
\langle f_j^*(x)^{p^s-\mu} ,\gamma f_j^*(x)^{p^s-\omega}\rangle  \text{ if } G(x) \neq 0, t=\omega-p^s+p^{s-1} \text{ with either }A_G(x) =0 \text{ or } A_G(x) \neq 0,\\  \delta \geq p^s-p^{s-1}; \vspace{1mm}\\
\langle f_j^*(x)^{p^s-\mu}+\gamma x^{d_jp^s-d_jp^{s-1}-d_j\delta-\text{deg }A_G(x)}f_j^*(x)^{p^{s-1}-\mu+\delta}A_G^*(x), \gamma f_j^*(x)^{p^s-\omega}\rangle \text{ if } G(x) \neq 0,\\ t=\omega-p^s+p^{s-1}, A_G(x) \neq 0 \text{ and }  \delta < p^s-p^{s-1} .
\end{array}\right. \vspace{-2mm}\end{equation*}
\end{enumerate}\end{enumerate}\end{thm}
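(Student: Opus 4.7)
The plan is to invoke Lemma \ref{lemdual}(b), which reduces the task to computing $\text{ann}(\mathcal{I})$ for each ideal $\mathcal{I}$ of $\mathcal{K}_j$ and then applying the $^*$ operation via Lemma \ref{lemdual}(c),(d) to produce explicit generators of $\mathcal{I}^{\perp}$ inside $\widehat{\mathcal{K}_j} = \mathcal{R}_2[x]/\langle k_j^*(x)\rangle$. Parts (a) and (b) are immediate from the definition of the dual, so all of the work sits in parts (c), (d) and (e).

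For Type II, $\mathcal{I} = \langle \gamma f_j(x)^\tau\rangle$, I would observe that since $\gamma^2 = 0$ in $\mathcal{K}_j$, a polynomial $f(x)$ annihilates $\gamma f_j(x)^\tau$ if and only if $\overline{f(x)}\cdot \overline{f_j(x)}^\tau = 0$ in $\overline{\mathcal{K}_j} = \overline{\mathcal{R}_2}[x]/\langle \overline{f_j(x)}^{p^s}\rangle$, i.e.\ if and only if $\overline{f_j(x)}^{p^s-\tau}$ divides $\overline{f(x)}$. Lifting back yields $\text{ann}(\mathcal{I}) = \langle f_j(x)^{p^s-\tau}, \gamma\rangle$, and Lemma \ref{lemdual}(c) then gives the claimed $\mathcal{I}^{\perp} = \langle f_j^*(x)^{p^s-\tau}, \gamma\rangle$.

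For Types III and IV the strategy is the same, but the annihilator must now absorb the $\gamma$-torsion that comes from the identity $f_j(x)^{p^s} = -\gamma g_j(x)$. Writing each candidate annihilator as $a(x)+\gamma b(x)$ with coefficients in $\mathcal{T}_{2}$ (Proposition \ref{teich}(b)) and expanding $(a(x)+\gamma b(x))\bigl(f_j(x)^\omega + \gamma f_j(x)^t G(x)\bigr)$ produces a residue-level equation forcing $\overline{f_j(x)}^{p^s-\omega}$ to divide $\overline{a(x)}$, together with a torsion equation involving $\gamma(M_j(x) - G(x)) = \gamma f_j(x)^\delta A_G(x)$ when $\text{char }\mathcal{R}_2 = p^2$. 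The nine branches in (d)(ii) and the five branches in (e)(ii) correspond exactly to the trichotomies (i) whether $\omega = p^s - p^{s-1} + t$, (ii) whether $A_G(x)$ vanishes or is a unit, and (iii) how $\delta$ sits relative to $p^s - p^{s-1}$ and $\omega - p^{s-1}$ --- precisely the case distinctions that produced the $\kappa$-values of Propositions \ref{charp} and \ref{charp2}. Once $\text{ann}(\mathcal{I})$ has been written out in each case, Lemma \ref{lemdual}(c),(d) converts each generator to its reciprocal, and the degree-correction factor $x^{\deg f + \deg g - \deg(fg)}$ from Lemma \ref{lemdual}(d) produces the explicit $x$-powers that appear in the statement.

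The principal obstacle will be the combinatorial bookkeeping in the $\text{char }\mathcal{R}_2 = p^2$ sub-cases: the coupling $f_j(x)^{p^s} \equiv -\gamma f_j(x)^{p^{s-1}} M_j(x)$ in $\mathcal{K}_j$ forces a cross-term between the principal part and the torsion part of every annihilator, producing the extra summand $\gamma x^{d_jp^s - d_jp^{s-1} - \deg M_j(x)}(\cdots) M_j^*(x)$ visible in every line of (d)(ii) and (e)(ii); deciding in each branch whether this cross-term can be absorbed into the $\gamma$-generator (yielding a principal dual) or not (yielding a genuinely non-principal dual) is what drives the case split. As a certificate of correctness at each step, I would cross-check $|\mathcal{I}||\mathcal{I}^{\perp}| = |\mathcal{K}_j| = p^{2md_jp^s}$ against Theorem \ref{thm3}; outside of this bookkeeping, every step is a direct computation.
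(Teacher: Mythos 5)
Your proposal follows essentially the same route as the paper: reduce via Lemma \ref{lemdual}(b) to computing $\text{ann}(\mathcal{I})$, determine the annihilator case-by-case by expanding products against the generators using $f_j(x)^{p^s}=-\gamma g_j(x)$ (with the case split governed by the $\kappa$-trichotomies of Propositions \ref{charp} and \ref{charp2}), and then apply the $^*$ operation with the degree corrections of Lemma \ref{lemdual}(c),(d). The only cosmetic difference is that the paper parametrizes $\text{ann}(\mathcal{I})$ in the canonical form of Theorem \ref{thm2} before solving for its parameters, whereas you work with a generic element $a(x)+\gamma b(x)$; the resulting equations are the same, and your cardinality cross-check against Theorem \ref{thm3} is a sensible extra safeguard.
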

\begin{proof} It is easy to see that $\mathcal{I}^{\perp}=\mathcal{K}_{j}$ when $\mathcal{I}=\{0\}$ and that $\mathcal{I}^{\perp}=\{0\}$ when $\mathcal{I}=\mathcal{K}_{j}.$  As the proofs of parts (d) and (e) are almost similar, we will prove parts (c) and (e) only. For this, we see,  by Lemma \ref{nilred}(c),  that the nilpotency index $\mathfrak{N}$ of $ f_j(x)$ in $\mathcal{K}_j$ is given by  $\mathfrak{N}=p^s$ when $\text{char }\mathcal{R}_2=p$ and  is given by  $\mathfrak{N}=2p^s-p^{s-1}$ when $\text{char }\mathcal{R}_2=p^2.$ We also note that $\gamma f_j(x)^{p^s}=0$ and $\gamma f_j(x)^{p^s-1}\neq0$ in $\mathcal{K}_j.$

To prove (c),  suppose that $\mathcal{I}=\langle\gamma f_j(x) ^{\tau}\rangle$ is of Type II. Here we see that $\text{ann}(\mathcal{I})=\langle f_j(x) ^{p^s-\tau},\gamma \rangle.$ From this and  by applying Lemma \ref{lemdual},  we obtain $\mathcal{I}^{\perp}=\langle f_j^*(x) ^{p^s-\tau},\gamma \rangle.$
 
To prove (e),  let $\mathcal{I}=\langle f_j(x) ^{\omega}+\gamma f_j(x) ^{t}G(x), \gamma f_j(x) ^{\mu}\rangle$ be of Type IV. Here we shall distinguish the following two cases: (i) $\text{char }\mathcal{R}_2=p$ and (ii) $\text{char }\mathcal{R}_2=p^2.$

\vspace{-1mm}\begin{enumerate}\vspace{-2mm}\item[(i)] First let $\text{char }\mathcal{R}_2=p.$ 

\vspace{-1mm}When $G(x)=0,$ we have $\mathcal{I}=\langle f_j(x) ^{\omega}, \gamma f_j(x) ^{\mu} \rangle.$ It is easy to see that $\text{ann}(\mathcal{I})=  \langle f_j(x)^{p^s-\mu}, \gamma f_j(x) ^{p^s-\omega}\rangle ,$ which, by applying Lemma \ref{lemdual},  implies that $\mathcal{I}^{\perp}=\langle f_j^*(x) ^{p^s-\mu}, \gamma f_j^*(x) ^{p^s-\omega}\rangle.$

 Now suppose that $G(x)$ is a unit in $\mathcal{K}_j.$ As $\text{ann}(\mathcal{I})$ is an ideal of $\mathcal{K}_j,$ by Theorem \ref{thm2}, we can write $\text{ann}(\mathcal{I})=\langle f_j(x) ^{a}+\gamma f_j(x) ^{b}H(x), \gamma f_j(x) ^{c}\rangle,$ where $H(x)$ is either $0$ or a unit in $\mathcal{K}_j.$ This implies that \vspace{-2mm}\begin{equation}\label{eq1}\gamma f_j(x) ^{a+\mu}=0,\gamma f_j(x) ^{c+\omega}=0 \text{ and } f_j(x) ^{a+\omega}+\gamma( f_j(x) ^{a+t}G(x)+ f_j(x) ^{b+\omega}H(x))=0.\vspace{-2mm}\end{equation}
It is easy to observe that  \eqref{eq1} holds for all $a \geq \max\{p^s-\mu,\omega -t\},$  $c \geq p^s-\omega,$ $b=a+t-\omega $ and  $H(x)=-G(x).$  Further, by Theorem \ref{thm2} and Proposition \ref{charp}, we see that $\mu < \kappa$  and $\kappa=\min\{\omega, p^s-\omega+t\},$ which implies that $p^s-\mu \geq \omega -t.$ This implies that  $\text{ann}(\mathcal{I})=\langle f_j(x) ^{p^s-\mu}-\gamma f_j(x) ^{p^s-\mu-\omega+t}G(x), \gamma f_j(x) ^{p^s-\omega}\rangle.$ From this and by applying Lemma \ref{lemdual}, we obtain
$\mathcal{I}^{\perp} = \langle f_j^*(x)^{p^s-\mu}-\gamma x^{d_j\omega-d_jt-\text{deg }G(x)} f_j^*(x)^{p^s-\mu-\omega+t}G^*(x), \gamma f_j^*(x)^{p^s-\omega}\rangle.$
\vspace{-6mm}\item[(ii)] Next suppose that $\text{char } \mathcal{R}_2=p^2.$ Here we have $f_j(x)^{p^s}=-\gamma f_j(x)^{p^{s-1}}M_j(x)$ in $\mathcal{K}_{j}.$  

When $G(x)=0,$ we have $\mathcal{I}=\langle f_j(x) ^{\omega}, \gamma f_j(x) ^{\mu} \rangle.$ As $\text{ann}(\mathcal{I})$ is an ideal of $\mathcal{K}_j,$  by Theorem \ref{thm2}, we can write $\text{ann}(\mathcal{I})=\langle f_j(x) ^{a}+\gamma f_j(x) ^{b}H(x), \gamma f_j(x) ^{c}\rangle,$ where $H(x)$ is either $0$ or a unit in $\mathcal{K}_j.$ This implies that \vspace{-2mm}\begin{equation}\label{eq2}\gamma f_j(x) ^{a+\mu}=0,\gamma f_j(x) ^{c+\omega}=0 \text{ and } f_j(x) ^{a+\omega}+\gamma f_j(x) ^{b+\omega}H(x)=0. 
\vspace{-2mm}\end{equation}
By Theorem \ref{thm2} and Proposition \ref{charp2}, we see that $\mu < \kappa$ and $\kappa =\min\{\omega, p^{s-1}\},$ which implies that $\mu < \omega$ and $\mu < p^{s-1}.$ Using this and by \eqref{eq2}, we get $a \geq p^s-\mu,$  $c \geq p^s-\omega$ and $-\gamma f_j(x) ^{a+\omega-p^s+p^{s-1}}M_j(x)+\gamma f_j(x) ^{b+\omega}H(x)=0,$ which holds only if $a \geq \max\{p^s-\mu,p^s-p^{s-1}\}=p^s-\mu,$ $b=a-p^s+p^{s-1}$ and $\gamma H(x)=\gamma M_j(x).$ This implies that  $\text{ann}(\mathcal{I})=\langle f_j(x) ^{p^s-\mu}, \gamma f_j(x) ^{p^s-\omega}\rangle$ when $p^s-p^{s-1}-\omega+\mu\leq0,$ while $\text{ann}(\mathcal{I})=\langle f_j(x) ^{p^s-\mu}+\gamma f_j(x) ^{p^{s-1}-\mu}M_j(x), \gamma f_j(x) ^{p^s-\omega}\rangle$ when $p^s-p^{s-1}-\omega+\mu>0.$ From this and by applying Lemma \ref{lemdual}, we get $\mathcal{I}^{\perp} = \langle f_j^*(x)^{p^s-\mu}, \gamma f_j^*(x)^{p^s-\omega}\rangle$ when $p^s-p^{s-1}-\omega+\mu\leq0,$ while $\mathcal{I}^{\perp} = \langle f_j^*(x)^{p^s-\mu}+\gamma x^{d_jp^s-d_jp^{s-1}-\text{deg }M_j(x)}f_j^*(x)^{p^{s-1}-\mu}M_j^*(x), \gamma f_j^*(x)^{p^s-\omega}\rangle$ when $p^s-p^{s-1}-\omega+\mu>0.$
 
 Next assume that $G(x)$ is a unit in $\mathcal{K}_j.$ As $\text{ann}(\mathcal{I})$ is an ideal of $\mathcal{K}_{j},$ by Theorem \ref{thm2}, we can write $\text{ann}(\mathcal{I})=\langle f_j(x) ^{a}+\gamma f_j(x) ^{b}H(x), \gamma f_j(x) ^{c}\rangle,$ where $H(x)$ is either $0$ or a unit in $\mathcal{K}_j.$ This implies that \vspace{-1mm}\begin{equation}\label{eq3}\gamma f_j(x) ^{a+\mu}=0,\gamma f_j(x) ^{c+\omega}=0 \text{ and } f_j(x) ^{a+\omega}+\gamma(f_j(x)^{a+t}G(x)+ f_j(x) ^{b+\omega}H(x))=0. 
\vspace{-1mm}\end{equation}
From \eqref{eq3}, we get   $a \geq p^s-\mu,$ $c \geq p^s-\omega$ and  \vspace{-1mm}\begin{equation}\label{eq4}  \gamma (-f_j(x) ^{a+\omega-p^s+p^{s-1}}M_j(x)+f_j(x)^{a+t}G(x)+ f_j(x) ^{b+\omega}H(x))=0.\end{equation} \vspace{-1mm}
Here we consider the following two cases separately: $p^s-p^{s-1}+t-\omega \neq 0$ and $p^s-p^{s-1}+t-\omega=0.$

When $p^s-p^{s-1}+t-\omega \neq 0,$ by Proposition \ref{charp2}, we note that $p^s-\omega+t-\mu \geq 0$ and $\mu \leq p^{s-1}.$ In this case, we see that \eqref{eq4} holds for $a=p^s-\mu,$ which implies that $\text{ann}(\mathcal{I})=\langle f_j(x) ^{p^s-\mu}+\gamma f_j(x) ^{p^{s-1}-\mu}M_j(x)-\gamma f_j(x)^{p^s-\mu+t-\omega} G(x), \gamma f_j(x) ^{p^s-\omega}\rangle.$ From this and using Lemma \ref{lemdual}, we get
\small\begin{equation*}\mathcal{I}^{\perp} = \langle f_j^*(x)^{p^s-\mu}+\gamma x^{d_jp^s-d_jp^{s-1}-\text{deg }M_j(x)}f_j^*(x)^{p^{s-1}-\mu}M_j^*(x)-\gamma x^{d_j\omega-d_jt-\text{deg }G(x)}f_j^*(x)^{p^{s}-\mu+t-\omega}G^*(x),   \gamma f_j^*(x)^{p^s-\omega}\rangle.\vspace{-2mm}\end{equation*}
 
Next suppose that $p^s-p^{s-1}+t-\omega=0.$  In this case, \eqref{eq4} can be rewritten as\vspace{-1mm}  \begin{equation}\label{eq5}\gamma f_j(x) ^{b+\omega}H(x)=\gamma f_j(x) ^{a+\omega-p^s+p^{s-1}}(M_j(x)-G(x))=\gamma f_j(x) ^{a+\omega-p^s+p^{s-1}+\delta}A_G(x).\vspace{-2mm}\end{equation}   
When $A_G(x)=0,$ we see that \eqref{eq5} holds for all $b \geq p^s-\omega,$ which implies that   $\text{ann}(\mathcal{I})=\langle f_j(x) ^{p^s-\mu}, \gamma f_j(x) ^{p^s-\omega}\rangle.$ From this and using Lemma \ref{lemdual}, we obtain
$\mathcal{I}^{\perp} = \langle f_j^*(x)^{p^s-\mu}, \gamma f_j^*(x)^{p^s-\omega}\rangle.$ 

Next let  $A_G(x)$ be a unit in $\mathcal{K}_j.$  Here by Proposition \ref{charp2}, we see that  $p^{s-1}+\delta \geq \mu.$  When $\delta\geq  p^s-p^{s-1},$ we see that $\gamma f_j(x) ^{\omega+\delta-p^s+p^{s-1}} \in \mathcal{I},$ which implies that $\mathcal{I}=\langle f_j(x) ^{\omega}+\gamma f_j(x) ^{\omega-p^s+p^{s-1}}M_j(x)-\gamma f_j(x) ^{\omega+\delta-p^s+p^{s-1}}A_G(x), \gamma f_j(x) ^{\mu}\rangle=\langle f_j(x) ^{\omega}+\gamma f_j(x) ^{\omega-p^s+p^{s-1}}M_j(x), \gamma f_j(x) ^{\mu}\rangle.$ Now when $\delta \geq p^s-p^{s-1},$ we see that \eqref{eq5} holds for all $b \geq p^s-\omega,$ which implies that   $\text{ann}(\mathcal{I})=\langle f_j(x) ^{p^s-\mu}, \gamma f_j(x) ^{p^s-\omega}\rangle.$ From this and using Lemma \ref{lemdual}, we get $\mathcal{I}^{\perp} = \langle f_j^*(x)^{p^s-\mu}, \gamma f_j^*(x)^{p^s-\omega}\rangle.$ Moreover, when $\delta < p^s-p^{s-1},$ we observe that  \eqref{eq5} holds for all $a \geq p^s-\mu,$ $b=a+p^{s-1}-p^s+\delta$ and $H(x)=A_G(x).$  This implies that $\text{ann}(\mathcal{I})=\langle f_j(x) ^{p^s-\mu}+\gamma f_j(x) ^{p^{s-1}-\mu+\delta}A_G(x), \gamma f_j(x) ^{p^s-\omega}\rangle,$ which, by Lemma \ref{lemdual}, further implies that 
$\mathcal{I}^{\perp} = \langle f_j^*(x)^{p^s-\mu}+\gamma x^{d_jp^s-d_jp^{s-1}-d_j\delta-\text{deg }A_G(x)}f_j^*(x)^{p^{s-1}-\mu+\delta}A_G^*(x), \gamma f_j^*(x)^{p^s-\omega}\rangle.$ 
 \end{enumerate}\vspace{-2mm}
This completes the proof of the theorem.
\vspace{-1mm}\end{proof}
\vspace{-1mm}As a consequence of the above theorems, we obtain some isodual $\alpha$-constacyclic codes of length $np^s$ over $\mathcal{R}_2.$
\vspace{-2mm}\begin{cor} \label{c2} Suppose that $\beta=0.$ Let $\alpha =\alpha_0^{p^s}\in \mathcal{T}_{2} \setminus \{0\},$ where $\alpha_0 \in \mathcal{T}_{2}$ is such that $x^n-\alpha_0$ is  irreducible  over $\mathcal{R}_2 .$  Following the same notations as in Theorem \ref{thm2}, we have the following:\vspace{-1mm}
\begin{enumerate}
\vspace{-1mm}\item[(a)] The code $\langle \gamma \rangle$ is the only isodual $\alpha$-constacyclic code of Type II over $\mathcal{R}_2.$ 
\vspace{-2mm}\item[(b)] When $\text{ char }\mathcal{R}_2$ is an odd prime $p$  with either $G(x)=0$ or $G(x)\neq 0$ and $p^s-2\omega+t \geq 0,$ there does not exist any isodual $\alpha$-constacyclic code of Type III over $\mathcal{R}_2.$   

When $\text{ char }\mathcal{R}_2=2,$  all the isodual $\alpha$-constacyclic codes of  Type III over $\mathcal{R}_2$ are given by $ \langle(x^n-\alpha_0) ^{2^{s-1}} \rangle, $ $\langle(x^n-\alpha_0) ^{2^{s-1}}+\gamma (x^n-\alpha_0) ^{t}G(x)\rangle$ and $\langle(x^n-\alpha_0) ^{\omega}+\gamma G(x)\rangle,$ where  $ \omega > 2^{s-1}$ and $G(x) \neq 0.$ 

When $\text{char }\mathcal{R}_2=4,$  the  codes $\langle(x^n-\alpha_0) ^{2^{s-1}}\rangle$ and $\langle(x^n-\alpha_0) ^{2^{s-1}}+\gamma (x^n-\alpha_0) ^{t}G(x)\rangle$  are isodual $\alpha$-constacyclic codes of Type III over $\mathcal{R}_2$ for each $G(x)\neq 0$  and for each integer $t \geq 1.$  
 
\vspace{-2mm}\item[(c)] When $\text{char }\mathcal{R}_2=2,$ all the isodual $\alpha$-constacyclic codes of  Type IV over $\mathcal{R}_2$ are given by $\langle (x^n-\alpha_0)^{\omega}+\gamma(x^n-\alpha_0)^t G(x), \gamma (x^n-\alpha_0)^{2^s-\omega} \rangle,$  where  $2^{s-1}< \omega  < 2^{s}$  and $0 \leq t<2^s-\omega$ if $G(x) \neq 0.$  

When  $\text{ char }\mathcal{R}_2$ is an odd prime $p,$ the $\alpha$-constacyclic codes $\langle (x^n-\alpha_0)^{\omega}, \gamma (x^n-\alpha_0)^{p^s-\omega} \rangle,$ $\frac{p^s}{2} < \omega  < p^{s},$ are isodual codes of Type IV over $\mathcal{R}_2.$

When $\text{char }\mathcal{R}_2=p^2,$  the $\alpha$-constacyclic codes $\langle (x^n-\alpha_0)^{\omega}, \gamma (x^n-\alpha_0)^{p^s-\omega} \rangle,$   $\frac{2p^s-p^{s-1}}{2} \leq \omega < p^s,$ are isodual codes of Type IV over $\mathcal{R}_2.$ \vspace{-1mm}
\end{enumerate}
\end{cor}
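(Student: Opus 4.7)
Since $x^n-\alpha_0$ is irreducible over $\mathcal{R}_2$ we are in the case $r=1$ with $f_1(x)=x^n-\alpha_0$ and $d_1=n$, so the $\alpha$-constacyclic codes of length $np^s$ over $\mathcal{R}_2$ are exactly the ideals of $\mathcal{K}_1$ classified in Theorem \ref{thm2}. My overall plan is, for each such code $\mathcal{I}$: (i) compute $|\mathcal{I}|$ by Theorem \ref{thm3}; (ii) read off $\mathcal{I}^\perp$ from Theorem \ref{dual}, identify its Type and its structural parameters after the reversal $f(x)\mapsto x^{\deg f}f(x^{-1})$; (iii) reapply Theorem \ref{thm3} to compute $|\mathcal{I}^\perp|$; (iv) impose the necessary condition $|\mathcal{I}|=|\mathcal{I}^\perp|$; and (v) for $\mathcal{R}_2$-linear equivalence, demand in addition that $\mathcal{I}^\perp$ lies in the same Type with the same $G$-pattern as $\mathcal{I}$, so that the monomial reversal map identifies $\mathcal{I}^\perp$ with $\mathcal{I}$.

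For (a), take $\mathcal{I}=\langle\gamma f_j(x)^\tau\rangle$ of size $p^{mn(p^s-\tau)}$; Theorem \ref{dual}(c) gives $\mathcal{I}^\perp=\langle f_j^*(x)^{p^s-\tau},\gamma\rangle$, a Type IV ideal with $(\omega',\mu')=(p^s-\tau,0)$ and $G'=0$, so Theorem \ref{thm3}(e) yields $|\mathcal{I}^\perp|=p^{mn(p^s+\tau)}$. Equality forces $\tau=0$, and in that case $\mathcal{I}=\langle\gamma\rangle$ and $\mathcal{I}^\perp=\langle\gamma\rangle\subseteq\widehat{\mathcal{R}_{\alpha,0}}$ (using $f_j(x)^{p^s}\in\langle\gamma\rangle$ in both characteristics, by Lemma \ref{nilred}(b)) are obviously $\mathcal{R}_2$-linearly equivalent.

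For (b), write $\mathcal{I}=\langle f_j(x)^\omega+\gamma f_j(x)^tG(x)\rangle$. In the char $p$ subcases of Theorem \ref{thm3}(d) one has $|\mathcal{I}|=p^{2mn(p^s-\omega)}$ when $G(x)=0$ or $p^s-2\omega+t\ge0$, and $|\mathcal{I}|=p^{mn(p^s-t)}$ when $G(x)\ne0$ with $p^s-2\omega+t<0$; the corresponding duals from Theorem \ref{dual}(d)(i) are again Type III (with parameters $\omega'=p^s-\omega$, $t'=p^s-2\omega+t$) or Type IV (with $\omega'=\omega-t$, $\mu'=p^s-\omega$), giving $|\mathcal{I}^\perp|=p^{2mn\omega}$ or $p^{mn(p^s+t)}$ respectively. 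Setting sizes equal produces $\omega=p^s/2$ in the first instance (impossible for odd $p$, forcing $\omega=2^{s-1}$ for $p=2$) and $t=0$ with $\omega>p^s/2$ in the second (yielding the char-$2$ family $\langle(x^n-\alpha_0)^\omega+\gamma G(x)\rangle$ with $\omega>2^{s-1}$). The char $p^2=4$ statement is proved by the same template applied to the relevant sub-cases of Theorem \ref{dual}(d)(ii); direct substitution of $p=2$ and $\omega=2^{s-1}$ shows that both announced families satisfy $|\mathcal{I}|=|\mathcal{I}^\perp|=2^{mn\cdot 2^s}$ and have duals of the same Type III shape.

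For (c), $\mathcal{I}=\langle f_j(x)^\omega+\gamma f_j(x)^tG(x),\gamma f_j(x)^\mu\rangle$ has size $p^{mn(2p^s-\mu-\omega)}$ by Theorem \ref{thm3}(e); in every sub-case of Theorem \ref{dual}(e) the dual is again a Type IV ideal in which the pair $(\omega,\mu)$ is replaced by $(p^s-\mu,p^s-\omega)$, so $|\mathcal{I}^\perp|=p^{mn(\omega+\mu)}$ and the size equation becomes $\omega+\mu=p^s$. Combining this with $\mu<\kappa\le\omega$ and the explicit $\kappa$ from Proposition \ref{charp} (char $p$) or Proposition \ref{charp2} (char $p^2$) gives the announced ranges: $\omega>p^s/2$ in the char-$p$, $G=0$ row, and the refined inequalities $2^{s-1}<\omega<2^s$ and $0\le t<2^s-\omega$ in the char-$2$ row. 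In char $p^2$ with $G=0$, insisting that $\mathcal{I}^\perp$ also have $G'=0$ (needed to match the form of $\mathcal{I}$ via the monomial reversal) selects the first alternative of Theorem \ref{dual}(e)(ii), i.e., $p^s-p^{s-1}-\omega+\mu\le0$; with $\mu=p^s-\omega$ this rearranges to $\omega\ge\frac{2p^s-p^{s-1}}{2}$, matching the announced range. The main obstacle throughout is precisely this kind of structural matching in the char $p^2$ analysis: one must track the auxiliary parameter $\delta$ from the decomposition $\gamma(M_j-G)=\gamma f_j^\delta A_G$ through both Theorems \ref{thm3} and \ref{dual}, keeping careful account of which sub-row of the dual table applies, before the size and form matching can be invoked to conclude equivalence.
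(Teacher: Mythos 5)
Your overall strategy --- compute $|\mathcal{I}|$ and $|\mathcal{I}^{\perp}|$ from Theorems \ref{thm3} and \ref{dual}, impose $|\mathcal{I}|=|\mathcal{I}^{\perp}|$ as a necessary condition, and then verify $\mathcal{R}_2$-linear equivalence --- is exactly the paper's, and part (a), the char-$p$ half of part (b), and part (c) go through essentially as you describe. The genuine gap is in the char-$4$ statement of part (b). Your equivalence criterion in step (v), namely that $\mathcal{I}^{\perp}$ should lie in the same Type with the same $G$-pattern so that the monomial reversal identifies it with $\mathcal{I}$, does not apply to $\mathcal{C}=\langle (x^n-\alpha_0)^{2^{s-1}}\rangle$ when $\text{char}\,\mathcal{R}_2=p^2$: since $f_j(x)^{p^s}=-\gamma f_j(x)^{p^{s-1}}M_j(x)$ with $M_j(x)$ a unit, Theorem \ref{dual}(d)(ii) gives (with $\omega=2^{s-1}=p^{s-1}$, so $M_j(x)=za_1\alpha_0^{2^{s-1}}$ is a nonzero constant) $\mathcal{C}^{\perp}=\langle (-\alpha_0)^{2^{s-1}}(x^n-\alpha_0^{-1})^{2^{s-1}}+\gamma x^{n2^{s-1}}za_1\alpha_0^{2^{s-1}}\rangle,$ which has a nonzero $\gamma$-part even though $G(x)=0.$ Equality of sizes together with ``duals of the same Type III shape'' does not yield isoduality; one must still exhibit an equivalence between $\langle (x^n-\alpha_0)^{2^{s-1}}\rangle$ and $\mathcal{D}=\langle (x^n-\alpha_0)^{2^{s-1}}+\gamma za_1\alpha_0^{2^{s-1}}\rangle.$

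The paper closes this by expanding $(x^n-\alpha_0)^{2^{s-1}}$ modulo $4$: by Proposition \ref{bino} all middle binomial coefficients vanish except $\binom{2^{s-1}}{2^{s-2}},$ which is exactly divisible by $2,$ so both generators become trinomials $x^{n2^{s-1}}+\binom{2^{s-1}}{2^{s-2}}x^{n2^{s-2}}(-\alpha_0)^{2^{s-2}}+c$ differing only in the constant term $c=\alpha_0^{2^{s-1}}$ versus $c=\alpha_0^{2^{s-1}}(1+\gamma za_1),$ from which the equivalence of $\mathcal{C}$ and $\mathcal{D}$ is read off. The same extra term $\gamma za_1\alpha_0^{2^{s-1}}x^{n2^{s-1}}$ appears in the dual of the second char-$4$ family $\langle (x^n-\alpha_0)^{2^{s-1}}+\gamma (x^n-\alpha_0)^{t}G(x)\rangle$ with $t\geq 1,$ and the same device is needed there. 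Without this computation your argument establishes only the necessary size condition in characteristic $4,$ not the claimed isoduality.
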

\begin{proof} Let $\mathcal{C}$ be an $\alpha$-constacyclic code of length $np^s$ over $\mathcal{R}_2.$ For $\mathcal{C}$ to be isodual, we must have $|\mathcal{C}|=|\mathcal{C}^{\perp}|.$
\begin{enumerate}
\vspace{-2mm}\item[(a)] Let $\mathcal{C}$ be of Type II, i.e., $\mathcal{C}=\langle\gamma (x^n-\alpha_0)^\tau  \rangle$ for some $\tau,$ $0 \leq \tau  < p^s.$ By Theorems \ref{thm3} and \ref{dual}, we observe that $\mathcal{C}^{\perp}=\langle(x^n-\alpha_0^{-1}) ^{p^s-\tau},\gamma \rangle,$  $|\mathcal{C}|=p^{mn(p^s-\tau)}$ and $|\mathcal{C}^{\perp}|=p^{mn(p^s+\tau)}.$   Now if the code $\mathcal{C}=\langle \gamma (x^n-\alpha_0)^{\tau}\rangle$ is isodual, then we must have $|\mathcal{C}|=|\mathcal{C}^{\perp}|,$ which gives $\tau=0.$ On the other hand, when $\tau=0,$ we see that the codes  $\mathcal{C}=\langle \gamma \rangle$ and $\mathcal{C}^{\perp}=\langle \gamma \rangle$ are $\mathcal{R}_2$-linearly equivalent. From this, it follows that $\langle \gamma \rangle$ is the only isodual $\alpha$-constacyclic code of Type II over $\mathcal{R}_2.$ 
\vspace{-2mm}\item[(b)] If $\mathcal{C}$ is of Type III, then $\mathcal{C}=\langle (x^n-\alpha_0)^{\omega}+\gamma (x^n-\alpha_0)^t G(x)\rangle,$ where $0 < \omega <  p^s,$ $G(x)$ is either 0 or a unit in $\mathcal{R}_{\alpha,0}$ and $0 \leq t< \omega$ if $G(x) \neq 0.$ Here we shall consider the following two cases separately: (i) $\text{char }\mathcal{R}_2=p$ and (ii) $\text{char }\mathcal{R}_2=p^2.$

(i) First let $\text{char }\mathcal{R}_2=p.$ When $G(x)=0,$ by Theorems \ref{thm3} and \ref{dual}, we have $|\mathcal{C}|=p^{2mn(p^s-\omega)},$ $\mathcal{C}^{\perp}=
\langle (x^n-\alpha_0^{-1})^{p^s-\omega} \rangle$ and  $|\mathcal{C}^\perp|=p^{2mn \omega}.$ Now if the code $\mathcal{C}$ is isodual, then we must have $|\mathcal{C}|=|\mathcal{C}^{\perp}|,$ which gives $p=2$ and $\omega = 2^{s-1}.$ Further, when $p=2$ and $\omega = 2^{s-1},$ we see that the codes $\mathcal{C}(\subseteq \mathcal{R}_{\alpha,0})$ and $\mathcal{C}^{\perp}(\subseteq \widehat{\mathcal{R}_{\alpha,0}})$ are $\mathcal{R}_2$-linearly equivalent. 

Next when $G(x)$ is a unit in $\mathcal{R}_{\alpha,0},$ by Theorems \ref{thm3} and \ref{dual}, we see that $|\mathcal{C}|=p^{2mn(p^s-\omega)},$ 
$\mathcal{C}^{\perp}= \langle(-\alpha_0)^{p^s-\omega}(x^n-\alpha_0^{-1})^{p^s-\omega}-\gamma x^{n\omega-nt-\text{deg }G(x)}(-\alpha_0)^{p^s-2\omega+t}(x^n-\alpha_0^{-1}) ^{p^s-2\omega+t}G^*(x)\rangle$ and $|\mathcal{C}^\perp|=p^{2mn\omega}$ if $p^s-2\omega+t \geq 0 $ and that $|\mathcal{C}|=p^{mn(p^s-t)},$ 
$\mathcal{C}^{\perp}= \langle(-\alpha_0)^{\omega-t}(x^n-\alpha_0^{-1})^{\omega-t}-\gamma x^{n\omega-nt-\text{deg }G(x)}G^*(x), \gamma (x^n-\alpha_0^{-1})^{p^s-\omega}\rangle$ and $|\mathcal{C}^\perp|=p^{mn(p^s+t)}$ if $p^s-2\omega+t < 0 .$ Further, in the case when $p^s-2\omega+t \geq 0,$ for the code $\mathcal{C}$ to be isodual, we must have $|\mathcal{C}|=|\mathcal{C}^{\perp}|,$ which gives $p=2$ and $\omega = 2^{s-1}.$ On the other hand, when $p=2$ and $\omega = 2^{s-1},$ it is easy to see that the codes $\mathcal{C}(\subseteq \mathcal{R}_{\alpha,0})$ and $\mathcal{C}^{\perp}(\subseteq \widehat{\mathcal{R}_{\alpha,0}})$ are $\mathcal{R}_2$-linearly equivalent. Furthermore, when $p^s-2\omega+t < 0,$ for the code $\mathcal{C}$ to be isodual, we must have $|\mathcal{C}|=|\mathcal{C}^{\perp}|,$ which implies that $t=0.$ When $t=0$ and $p=2,$ we see that $\mathcal{C}(\subseteq \mathcal{R}_{\alpha,0})$ and $\mathcal{C}^{\perp}(\subseteq \widehat{\mathcal{R}_{\alpha,0}})$ are $\mathcal{R}_2$-linearly equivalent.

(ii) Next suppose that $\text{char } \mathcal{R}_2=p^2.$ 

When $G(x)=0$ and $\omega \leq p^{s-1},$ by Theorems \ref{thm3} and \ref{dual}, we have $|\mathcal{C}|=p^{2mn(p^s-\omega)},$ $\mathcal{C}^{\perp}=\langle(-\alpha_0)^{p^s-\omega}(x^n-\alpha_0^{-1})^{p^s-\omega}+\gamma x^{np^s-np^{s-1}-\text{deg }M_j(x)}(-\alpha_0)^{p^{s-1}-\omega}(x^n-\alpha_0^{-1})^{p^{s-1}-\omega}M_j^*(x) \rangle$ and  $|\mathcal{C}^\perp|=p^{2mn \omega}.$ Now for the code $\mathcal{C}$ to be isodual, we must have $|\mathcal{C}|=|\mathcal{C}^{\perp}|,$ which gives $p=2$ and $\omega = 2^{s-1}.$ Further,  when $p=2$ and $\omega = 2^{s-1},$ we note that $M_j(x)=za_1\alpha_0^{2^{s-1}}\in \mathcal{R}_2,$ which implies that $\mathcal{C}^{\perp}= \langle(-\alpha_0)^{2^{s-1}}(x^n-\alpha_0^{-1})^{2^{s-1}}+ \gamma x^{n2^{s-1}} za_1\alpha_0^{2^{s-1}} \rangle.$ It is easy to observe that $\mathcal{C}^{\perp}$ is $\mathcal{R}_2$-linearly equivalent to the $\alpha$-constacyclic code $\mathcal{D}=\langle(x^n-\alpha_0)^{2^{s-1}}+ \gamma  za_1\alpha_0^{2^{s-1}}\rangle$ of length $n2^s$ over $\mathcal{R}_2.$ In view of this, we see that the codes $\mathcal{C}$ and $\mathcal{C}^{\perp}$ are $\mathcal{R}_2$-linearly equivalent if and only if $\mathcal{C}$ and $\mathcal{D}$ are $\mathcal{R}_2$-linearly equivalent. For $s=1,$ we see that $\mathcal{C}=\langle x^n-\alpha_0 \rangle $ and $\mathcal{D}=\langle x^n-\alpha_0+ \gamma za_1  \rangle,$ which are trivially $\mathcal{R}_2$-linearly equivalent. For $s\geq 2,$ by Proposition \ref{bino},  we note that $2 ||\binom{2^{s-1}}{2^{s-2}}$ and that $\binom{2^{s-1}}{i}=0$ for each $i$ satisfying $1\leq i \leq 2^{s-1}-1$ and $i \neq 2^{s-2}.$ From this, we get $\mathcal{C}=\langle x^{n2^{s-1}}+\binom{2^{s-1}}{2^{s-2}}x^{n2^{s-2}}(-\alpha_0)^{2^{s-2}}+\alpha_0^{2^{s-1}} \rangle $ and $\mathcal{D}=\langle x^{n2^{s-1}}+\binom{2^{s-1}}{2^{s-2}}x^{n2^{s-2}}(-\alpha_0)^{2^{s-2}}+\alpha_0^{2^{s-1}}(1+ \gamma  za_1)\rangle.$ It is easy to observe that the codes  $\mathcal{C}(\subseteq \mathcal{R}_{\alpha,0})$ and $\mathcal{D}(\subseteq \mathcal{R}_{\alpha,0})$ are $\mathcal{R}_2$-linearly equivalent.

 Next when $G(x) \neq 0,$  $p^s-p^{s-1}+t-\omega > 0$ and $\omega \leq p^{s-1},$ by Theorems \ref{thm3} and \ref{dual}, we have $|\mathcal{C}|=p^{2mn(p^s-\omega)},$  $\mathcal{C}^{\perp}=\langle(-\alpha_0)^{p^s-\omega}(x^n-\alpha_0^{-1})^{p^s-\omega}+\gamma x^{np^s-np^{s-1}-\text{deg }M_j(x)}(-\alpha_0)^{p^{s-1}-\omega}(x^n-\alpha_0^{-1})^{p^{s-1}-\omega}M_j^*(x)-\gamma x^{n\omega-nt-\text{deg }G(x)}(-\alpha_0)^{p^s-2\omega+t }(x^n-\alpha_0^{-1})^{p^s-2\omega+t }G^*(x) \rangle$ and  $|\mathcal{C}^\perp|=p^{2mn \omega}.$  
Here for the code $\mathcal{C}$ to be isodual, we must have $|\mathcal{C}|=|\mathcal{C}^{\perp}|,$ which gives $p=2$ and $\omega = 2^{s-1}.$ On the other hand,  when $p=2$ and $\omega = 2^{s-1},$ we see that $\mathcal{C}^{\perp}= \langle(-\alpha_0)^{2^{s-1}}(x^n-\alpha_0^{-1})^{2^{s-1}} -\gamma (-\alpha_0)^{t} x^{n2^{s-1}-nt-\text{deg }G(x)} (x^n-\alpha_0^{-1})^{t}G^*(x)+\gamma  za_1\alpha_0^{2^{s-1}}x^{n2^{s-1}} \rangle,$ which is $\mathcal{R}_2$-linearly equivalent to the  $\alpha$-constacyclic code $\mathcal{D}_{1}=\langle(x^n-\alpha_0)^{2^{s-1}}-\gamma (x^n-\alpha_0)^{t}G(x)+ \gamma  za_1\alpha_0^{2^{s-1}}\rangle$ of length $n2^s$ over $\mathcal{R}_2.$ Further, one can easily  observe that the codes  $\mathcal{C}(\subseteq \mathcal{R}_{\alpha,0})$ and $\mathcal{D}_{1}(\subseteq \mathcal{R}_{\alpha,0})$ are $\mathcal{R}_2$-linearly equivalent, which implies that the codes $\mathcal{C}(\subseteq \mathcal{R}_{\alpha,0})$ and $\mathcal{C}^{\perp}(\subseteq \widehat{\mathcal{R}_{\alpha,0}})$ are $\mathcal{R}_2$-linearly equivalent.
 
When $p^s-p^{s-1}+t=\omega$ and $\gamma M_j(x)=\gamma G(x),$ by Theorems \ref{thm3} and \ref{dual}, we have $|\mathcal{C}|=p^{2mn(p^s-\omega)},$  $\mathcal{C}^{\perp}=\langle(x^n-\alpha_0^{-1})^{p^s-\omega} \rangle$ and  $|\mathcal{C}^\perp|=p^{2mn \omega}.$ Now for the code $\mathcal{C}$ to be isodual, we must have $p=2$ and $\omega = 2^{s-1}.$ On the other hand, when $p=2$ and $\omega = 2^{s-1},$ it is easy to see that the codes $\mathcal{C}(\subseteq \mathcal{R}_{\alpha,0})$ and $\mathcal{C}^{\perp}(\subseteq \widehat{\mathcal{R}_{\alpha,0}})$ are $\mathcal{R}_2$-linearly equivalent.

\vspace{-2mm}\item[(c)] If $\mathcal{C}$ is of Type IV, then $\mathcal{C} =\langle (x^n-\alpha_0)^{\omega}+\gamma (x^n-\alpha_0)^t G(x), \gamma (x^n-\alpha_0)^{\mu} \rangle,$ where $0 < \mu < \omega <  p^s,$ $G(x)$ is either 0 or a unit in $\mathcal{R}_{\alpha,0}$ and $0 \leq t < \mu$ if $G(x) \neq 0.$ Here by Theorems \ref{thm3} and \ref{dual}, we have $|\mathcal{C}|=p^{mn(2p^s-\omega-\mu)}$ and $|\mathcal{C}^{\perp}|=p^{mn(\omega+\mu)}.$ Now if the code $\mathcal{C}$ is isodual, then $2p^s-\omega-\mu=\omega+\mu,$ which gives $\mu=p^s-\omega.$ 

\vspace{-1mm}Now let $\mu=p^s-\omega.$ Then we have $\mathcal{C}=\langle (x^n-\alpha_0)^{\omega}+\gamma (x^n-\alpha_0)^t G(x), \gamma (x^n-\alpha_0)^{p^s-\omega} \rangle.$ Here also, we shall distinguish the following two cases: (i) $\text{char }\mathcal{R}_2=p$ and (ii) $\text{char }\mathcal{R}_2=p^2.$

(i) First let $\text{char }\mathcal{R}_2=p.$  When $p=2,$ working in a similar manner as in part (b), we see that the codes $\mathcal{C}(\subseteq \mathcal{R}_{\alpha,0})$ and $\mathcal{C}^{\perp}(\subseteq \widehat{\mathcal{R}_{\alpha,0}})$ are $\mathcal{R}_2$-linearly equivalent. When $p$ is an odd prime and $G(x)=0,$ we have $\mathcal{C}=\langle (x^n-\alpha_0)^{\omega}, \gamma (x^n-\alpha_0)^{p^s-\omega} \rangle.$ In this case, by Theorem \ref{dual}, we see that $\mathcal{C}^{\perp}=
\langle (x^n-\alpha_0^{-1})^{\omega}, \gamma (x^n-\alpha_0^{-1})^{p^s-\omega}\rangle ,$ which is  $\mathcal{R}_2$-linearly equivalent to $\mathcal{C}.$  

(ii) Next suppose that $\text{char } \mathcal{R}_2=p^2,$ $G(x)=0$ and $\omega \geq\frac{2p^s-p^{s-1}}{2}.$ Here  by Theorem \ref{dual}, we see that $\mathcal{C}^{\perp}=\langle (x^n-\alpha_0^{-1})^{\omega}, \gamma (x^n-\alpha_0^{-1})^{p^s-\omega}\rangle,$ which is clearly $\mathcal{R}_2$-linearly equivalent to the code $\mathcal{C}.$  \vspace{-1mm}
\end{enumerate}
This completes the proof.\vspace{-2mm}\end{proof}

In the following theorem, we determine Hamming distances of all $\alpha$-constacyclic codes of length $np^s$ over $\mathcal{R}_2$ when $x^n-\alpha_0$ is irreducible over $\mathcal{R}_2.$
\vspace{-2mm}\begin{thm}\label{dthm2} Let $\alpha=\alpha_0^{p^s} \in \mathcal{T}_{2}\setminus \{0\},$ where $\alpha_0 \in \mathcal{T}_{2}$ is such that $x^n-\alpha_0$ is irreducible over $\mathcal{R}_2.$ Let  $\mathcal{C}$ be an $\alpha$-constacyclic code of length $np^s$ over $\mathcal{R}_2$ (as determined  in Theorem \ref{thm2}). Then the Hamming distance $d_H(\mathcal{C})$ of the code $\mathcal{C}$ is given by the following:\begin{enumerate}
\vspace{-2mm}\item[(a)] If $\mathcal{C}=\{ 0 \},$ then  $d_H(\mathcal{C})=0.$
\vspace{-2mm}\item[(b)] If $\mathcal{C}=\langle 1 \rangle,$ then $d_H(\mathcal{C})=1.$
\vspace{-2mm}\item[(c)] If $\mathcal{C}= \langle \gamma (x^{n}-\alpha_0)^{\tau}\rangle$ is of the Type II, then we have
\vspace{-3mm}\begin{equation*}d_H(\mathcal{C})=\left\{\begin{array}{ll}
1 & \text{ if }  \tau=0;\\
\ell+2 & \text{ if } \ell p^{s-1}+1 \leq \tau \leq  (\ell +1)p^{s-1} \text{ with } 0 \leq \ell \leq p-2;\\
(i+1)p^k & \text{ if } p^s-p^{s-k}+(i-1)p^{s-k-1}+1\leq \tau \leq p^s-p^{s-k}+ip^{s-k-1} \\ & \text{ with } 1\leq i \leq p-1 \text{ and } 1 \leq k \leq s-1.\\
\end{array}\right. \vspace{-6mm}\end{equation*}
\vspace{-2mm}\item[(d)] If $\mathcal{C}=\langle (x^{n}-\alpha_0)^{\omega}+\gamma (x^{n}-\alpha_0)^t G(x)\rangle$ is of the Type III, then we have
\vspace{-3mm}\begin{equation*}d_H(\mathcal{C})=\left\{\begin{array}{ll}
\ell+2 & \text{ if } \ell p^{s-1}+1 \leq \kappa \leq  (\ell +1)p^{s-1} \text{ with } 0 \leq \ell \leq p-2;\\
(i+1)p^k & \text{ if } p^s-p^{s-k}+(i-1)p^{s-k-1}+1\leq \kappa \leq p^s-p^{s-k}+ip^{s-k-1} \\ & \text{ with } 1\leq i \leq p-1 \text{ and } 1 \leq k \leq s-1.\\
\end{array}\right. \vspace{-6mm}\end{equation*}
\vspace{-2mm}\item[(e)] If $\mathcal{C}=\langle (x^{n}-\alpha_0)^{\omega}+\gamma (x^{n}-\alpha_0)^t G(x), \gamma(x^{n}-\alpha_0)^\mu\rangle$ is of the Type IV, then we have
\vspace{-3mm}\begin{equation*}d_H(\mathcal{C})=\left\{\begin{array}{ll}
1 & \text{ if }  \mu=0;\\
\ell+2 & \text{ if } \ell p^{s-1}+1 \leq \mu \leq  (\ell +1)p^{s-1} \text{ with } 0 \leq \ell \leq p-2;\\
(i+1)p^k & \text{ if } p^s-p^{s-k}+(i-1)p^{s-k-1}+1\leq \mu \leq p^s-p^{s-k}+ip^{s-k-1} \\ & \text{ with } 1\leq i \leq p-1 \text{ and } 1 \leq k \leq s-1.\\
\end{array}\right. \vspace{-4mm}\end{equation*}
\end{enumerate}
\end{thm}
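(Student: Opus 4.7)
Parts (a) and (b) are immediate from the definitions, so the plan concentrates on Types II, III and IV. The unifying idea is to reduce every case to the Hamming distance of an ordinary $\overline{\alpha}$-constacyclic code over the residue field $\overline{\mathcal{R}_2}=\mathbb{F}_{p^m}$, and then invoke Theorem \ref{dthm}. Throughout, I use that $r=1,$ $f_1(x)=x^n-\alpha_0$ and $d_1=n,$ and that $\gamma^2=0.$ The key observation is the ``Hamming-weight preserving'' isomorphism of additive groups $\gamma\mathcal{K}_j\simeq \mathcal{K}_j/\langle \gamma\rangle\simeq\overline{\mathcal{R}_2}[x]/\langle(x^n-\overline{\alpha_0})^{p^s}\rangle,$ sending $\gamma g(x)\mapsto\overline{g(x)}$ for $g(x)\in\mathcal{T}_2[x]$ of degree $<np^s;$ under this map the weight $w_H(\gamma g(x))$ equals $w_H(\overline{g(x)})$. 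Moreover, for any $c(x)\in\mathcal{K}_j$ one has $w_H(c(x))\ge w_H(\overline{c(x)}).$

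For Type II, $\mathcal{C}=\langle\gamma(x^n-\alpha_0)^{\tau}\rangle$, every element of $\mathcal{C}$ is of the form $\gamma(x^n-\alpha_0)^{\tau}h_0(x)$ with $h_0(x)\in\mathcal{T}_2[x]$. Using the isomorphism above, I identify $\mathcal{C}$ (as a set with its Hamming-weight structure) with the $\overline{\alpha}$-constacyclic code $\overline{\mathcal{C}}=\langle(x^n-\overline{\alpha_0})^{\tau}\rangle$ of length $np^s$ over $\mathbb{F}_{p^m}$, and Theorem \ref{dthm} (with $\upsilon=\tau$) yields the stated formula.

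For Type IV, $\mathcal{C}=\langle(x^n-\alpha_0)^{\omega}+\gamma(x^n-\alpha_0)^{t}G(x),\gamma(x^n-\alpha_0)^{\mu}\rangle$, I argue by two-sided bounds. Since $\gamma(x^n-\alpha_0)^{\mu}\in\mathcal{C}$, one has $\langle\gamma(x^n-\alpha_0)^{\mu}\rangle\subseteq\mathcal{C}$, giving $d_H(\mathcal{C})\le d_H(\langle\gamma(x^n-\alpha_0)^{\mu}\rangle)$. For the reverse inequality, take any nonzero $c(x)=\bigl((x^n-\alpha_0)^{\omega}+\gamma(x^n-\alpha_0)^{t}G(x)\bigr)h_1(x)+\gamma(x^n-\alpha_0)^{\mu}h_2(x)$ and split $h_1(x)=h_{10}(x)+\gamma h_{11}(x)$ with $h_{10},h_{11}\in\mathcal{T}_2[x].$ If $\overline{h_{10}(x)}\ne 0$, then $\overline{c(x)}=(x^n-\overline{\alpha_0})^{\omega}\overline{h_{10}(x)}$ is a nonzero codeword in $\langle(x^n-\overline{\alpha_0})^{\omega}\rangle$, so $w_H(c(x))\ge w_H(\overline{c(x)})\ge d_H(\langle(x^n-\overline{\alpha_0})^{\omega}\rangle).$ If $h_{10}(x)=0,$ then $c(x)=\gamma\bigl((x^n-\alpha_0)^{\omega}h_{11}(x)+(x^n-\alpha_0)^{\mu}h_2(x)\bigr)\in\langle\gamma(x^n-\alpha_0)^{\mu}\rangle$ (using $\mu<\omega$), giving $w_H(c(x))\ge d_H(\langle\gamma(x^n-\alpha_0)^{\mu}\rangle).$ Since $\mu<\omega$ implies $d_H(\langle(x^n-\overline{\alpha_0})^{\mu}\rangle)\le d_H(\langle(x^n-\overline{\alpha_0})^{\omega}\rangle),$ both branches yield $w_H(c(x))\ge d_H(\langle\gamma(x^n-\alpha_0)^{\mu}\rangle)$. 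Applying Theorem \ref{dthm} with $\upsilon=\mu$ gives the formula of part (e).

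For Type III, $\mathcal{C}=\langle(x^n-\alpha_0)^{\omega}+\gamma(x^n-\alpha_0)^{t}G(x)\rangle$, I run the same two-sided argument, now using the crucial fact that $\kappa$ is, by its very definition, the smallest integer with $\gamma(x^n-\alpha_0)^{\kappa}\in\mathcal{C}.$ This gives $d_H(\mathcal{C})\le d_H(\langle\gamma(x^n-\alpha_0)^{\kappa}\rangle).$ For the lower bound, any $c(x)\in\mathcal{C}$ is of the form $\bigl((x^n-\alpha_0)^{\omega}+\gamma(x^n-\alpha_0)^{t}G(x)\bigr)h(x);$ decomposing $h(x)=h_0(x)+\gamma h_1(x),$ if $\overline{h_0(x)}\ne 0$ one obtains $w_H(c(x))\ge d_H(\langle(x^n-\overline{\alpha_0})^{\omega}\rangle)\ge d_H(\langle(x^n-\overline{\alpha_0})^{\kappa}\rangle)$ (as $\kappa\le\omega$), while if $h_0(x)=0$ one gets $c(x)=\gamma(x^n-\alpha_0)^{\omega}h_1(x)\in\langle\gamma(x^n-\alpha_0)^{\omega}\rangle\subseteq\langle\gamma(x^n-\alpha_0)^{\kappa}\rangle.$ Hence $d_H(\mathcal{C})=d_H(\langle\gamma(x^n-\alpha_0)^{\kappa}\rangle)$ and Theorem \ref{dthm} with $\upsilon=\kappa$ finishes the proof. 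The only delicate point, and the main technical obstacle, is verifying that $1\le\kappa<p^s$ so that Theorem \ref{dthm} really applies in the two ranges stated; this is guaranteed by Propositions \ref{charp} and \ref{charp2} since $0<\omega<p^s$ and, when $G(x)\ne 0,$ also $t<\omega.$
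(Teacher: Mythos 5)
Your proposal follows essentially the same route as the paper: parts (a)--(c) via the weight-preserving identification of $\gamma\mathcal{K}_j$ with $\overline{\mathcal{R}_2}[x]/\langle (x^n-\overline{\alpha_0})^{p^s}\rangle,$ and parts (d)--(e) via the two-sided bound $d_H(\mathcal{C})=d_H(\mathcal{C}\cap\langle\gamma\rangle)=d_H(\langle\gamma(x^n-\alpha_0)^{\kappa}\rangle)$ (resp. with $\mu$), followed by Theorem \ref{dthm}. The one step you state too loosely is the lower bound for Types III and IV: from $\overline{h_{10}(x)}\neq 0$ you conclude that $\overline{c(x)}=(x^n-\overline{\alpha_0})^{\omega}\overline{h_{10}(x)}$ is nonzero, but this fails if $(x^n-\overline{\alpha_0})^{p^s-\omega}$ divides $\overline{h_{10}(x)},$ in which case $\overline{c(x)}=0$ in $\overline{\mathcal{R}_2}[x]/\langle(x^n-\overline{\alpha_0})^{p^s}\rangle$ and the inequality $w_H(c(x))\ge w_H(\overline{c(x)})$ gives nothing. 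The paper avoids this by writing each codeword in its unique canonical form with $\deg A_Q(x)\le n(p^s-\omega)-1,$ which forces $(x^n-\overline{\alpha_0})^{\omega}\overline{A_Q(x)}$ to be a nonzero polynomial of degree less than $np^s.$ Your argument is repaired either by imposing the same degree bound on $h_{10}(x),$ or by adding a third case: if $\overline{c(x)}=0$ then $c(x)\in\mathcal{C}\cap\langle\gamma\rangle=\langle\gamma(x^n-\alpha_0)^{\kappa}\rangle$ (resp. $\langle\gamma(x^n-\alpha_0)^{\mu}\rangle$), so the desired bound holds anyway. With that fix the proof is complete; your closing observation that $1\le\kappa\le\omega<p^s$ (so Theorem \ref{dthm} applies) is correct and is a point the paper leaves implicit.
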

\begin{proof}  It is easy to see that $d_{H}(\mathcal{C})=0$ when $\mathcal{C}= \{ 0 \},$ and that $d_{H}(\mathcal{C})=1$
 when $\mathcal{C}=\langle 1 \rangle.$   Now to prove (c)-(e), we first observe that \vspace{-2mm} \begin{equation}\label{Q} w_H(Q(x)) \geq w_H(\overline{Q(x)})\end{equation}\vspace{-2mm} for each $Q(x) \in \mathcal{R}_{\alpha, 0} $ satisfying $\overline{Q(x)} \neq 0.$
\begin{description}\vspace{-1mm}\item[(c)] Let $\mathcal{C}= \langle \gamma (x^{n}-\alpha_0)^{\tau}\rangle$ be of the Type II. When $\tau =0,$ we have $\mathcal{C}= \langle \gamma \rangle,$ which implies that $d_H(\mathcal{C})=1.$ Now let $1 \leq \tau \leq p^s-1.$ Here we see that  $d_H(\mathcal{C})$ is equal to the Hamming distance of the $\overline{\alpha_0}^{p^s}$-constacyclic code  $\langle (x^n-\overline{\alpha}_0)^{\tau}\rangle$ of length $np^s$ over $\overline{\mathcal{R}_2}.$ Now by applying Theorem \ref{dthm}, part (c) follows. 

\vspace{-2mm}\item[(d)] Let $\mathcal{C}=\langle (x^{n}-\alpha_0)^{\omega}+\gamma (x^{n}-\alpha_0)^t G(x)\rangle$ be of Type III. Here we observe that an element $Q(x)\in \mathcal{C}$ can be uniquely expressed as $Q(x)= ((x^{n}-\alpha_0)^{\omega}+\gamma (x^{n}-\alpha_0)^t G(x))A_Q(x)+\gamma (x^{n}-\alpha_0)^{\kappa}B_Q(x),$ where $A_Q(x),B_Q(x) \in \mathcal{T}_{2}[x]$ satisfy  $ \text{ deg }A_Q(x) \leq n(p^s-\omega)-1$ if $A_Q(x) \neq 0$ and $ \text{ deg }B_Q(x) \leq n(p^s-\kappa)-1$ if $B_Q(x) \neq 0.$ When $A_Q(x)=0,$ we see that $Q(x)=\gamma B_Q(x) (x^{n}-\alpha_0)^\kappa$ with $B_Q(x) \in \mathcal{T}_{2}[x]\setminus \{0\},$ which implies that $w_H(Q(x)) =w_H( \gamma B_Q(x) (x^{n}-\alpha_0)^\kappa) \geq d_{H}( \langle \gamma(x^{n}-\alpha_0)^\kappa\rangle ) =d_{H}( \langle(x^{n}-\overline{\alpha_0})^\kappa\rangle).$ On the other hand, when  $A_Q(x) \neq 0,$ we see that $Q(x) =(x^{n}-\alpha_0)^{\omega}A_Q(x)+\gamma( (x^{n}-\alpha_0)^t G(x)A_Q(x)+(x^{n}-\alpha_0)^{\kappa}B_Q(x)).$ By \eqref{Q}, we get $w_{H}(Q(x)) \geq w_{H}((x^{n}-\overline{\alpha}_0)^{\omega}\overline{A_Q(x)}) \geq d_{H}( \langle(x^{n}-\overline{\alpha_0})^\omega\rangle).$ Since $\langle(x^{n}-\overline{\alpha_0})^\omega\rangle \subseteq \langle(x^{n}-\overline{\alpha_0})^\kappa\rangle,$ we have $d_{H}( \langle(x^{n}-\overline{\alpha_0})^\omega\rangle) \geq d_{H}( \langle(x^{n}-\overline{\alpha_0})^\kappa\rangle).$ From this, we get $w_H(Q(x)) \geq d_{H}( \langle  (x^{n}-\overline{\alpha_0})^\kappa\rangle)$ for each $Q(x) (\neq 0) \in \mathcal{C}.$ This implies that $d_{H}(\mathcal{C}) \geq d_{H}( \langle  (x^{n}-\overline{\alpha_0})^\kappa\rangle).$

Further, as $\langle \gamma(x^{n}-\alpha_0)^\kappa\rangle \subseteq \mathcal{C},$ we have $d_{H}(\langle \gamma(x^{n}-\alpha_0)^\kappa\rangle) \geq d_{H}(\mathcal{C}).$ This shows that $d_{H}(\langle \gamma(x^{n}-\alpha_0)^\kappa\rangle) = d_{H}(\mathcal{C}).$ Further, we see that $d_{H}(\langle \gamma(x^{n}-\alpha_0)^\kappa\rangle)$ is equal to the Hamming distance of the $\overline{\alpha_0}^{p^s}$-constacyclic code $\langle (x^{n}-\overline{\alpha_0})^\kappa\rangle$ of length $np^s$ over $\overline{\mathcal{R}_{2}}.$ From this and by applying Theorem \ref{dthm}, part (d) follows.

\vspace{-2mm}\item[(e)] 

Let $\mathcal{C}=\langle (x^{n}-\alpha_0)^{\omega}+\gamma (x^{n}-\alpha_0)^t G(x), \gamma(x^{n}-\alpha_0)^\mu\rangle$ be of the Type IV.  When $\mu =0,$ we have $\gamma \in \mathcal{C},$ which implies that $d_{H}(\mathcal{C})=1.$ 

Now let $\mu \geq 1.$ Here we observe that each codeword $Q(x)\in \mathcal{C}$ can be uniquely expressed as $Q(x)= ((x^{n}-\alpha_0)^{\omega}+\gamma (x^{n}-\alpha_0)^t G(x))A_Q(x)+\gamma (x^{n}-\alpha_0)^{\mu}B_Q(x),$ where $A_Q(x),B_Q(x) \in \mathcal{T}_{2}[x]$ satisfy  $ \text{ deg }A_Q(x) \leq n(p^s-\omega)-1$ if $A_Q(x)\neq 0$ and $ \text{ deg }B_Q(x) \leq n(p^s-\mu)-1$ if $B_Q(x)\neq 0.$ When $A_Q(x) =0,$ we have $Q(x)=\gamma (x^{n}-\alpha_0)^\mu B_Q(x),$ which implies that $w_{H}(Q(x)) \geq d_{H}(\langle \gamma  (x^{n}-\alpha_0)^\mu \rangle).$ 

Next when $A_Q(x) \neq 0,$ we have $Q(x)=(x^{n}-\alpha_0)^\omega A_Q(x) +\gamma((x^{n}-\alpha_0)^t A_Q(x)+ (x^{n}-\alpha_0)^\mu B_Q(x)),$ which, by \eqref{Q}, implies that $w_{H}(Q(x)) \geq w_{H}((x^{n}-\overline{\alpha}_0)^{\omega}\overline{A_Q(x)}) \geq d_{H}( \langle(x^{n}-\overline{\alpha_0})^\omega\rangle).$ As $\langle(x^{n}-\overline{\alpha_0})^\omega\rangle \subseteq \langle(x^{n}-\overline{\alpha_0})^\mu\rangle,$ we get $d_{H}( \langle(x^{n}-\overline{\alpha_0})^\omega\rangle) \geq d_{H}( \langle(x^{n}-\overline{\alpha_0})^\mu\rangle).$ From this, we get $w_H(Q(x)) \geq d_{H}( \langle  (x^{n}-\overline{\alpha_0})^\mu\rangle)$ for each $Q(x) (\neq 0) \in \mathcal{C}.$ This implies that $d_{H}(\mathcal{C}) \geq d_{H}( \langle  (x^{n}-\overline{\alpha_0})^\mu\rangle).$

On the other hand, since $\langle \gamma(x^{n}-\alpha_0)^\mu \rangle \subseteq \mathcal{C},$ we have $d_{H}(\langle \gamma(x^{n}-\alpha_0)^\mu\rangle) \geq d_{H}(\mathcal{C}).$ Since $d_{H}( \langle \gamma (x^{n}-\alpha_0)^\mu\rangle)$ is equal to the Hamming distance of the $\overline{\alpha_0}^{p^s}$-constacyclic code  $\langle (x^{n}-\overline{\alpha_0})^\mu\rangle$ of length $np^s$ over $\overline{\mathcal{R}_{2}},$ we obtain $d_{H}(\mathcal{C})=d_H(\langle (x^{n}-\overline{\alpha_0})^\mu\rangle) .$ From this and by applying Theorem \ref{dthm}, part (e) follows.

\end{description}\vspace{-8mm}
\end{proof}

In the following theorem, we determine RT distances of all $\alpha$-constacyclic codes of length $np^s$ over $\mathcal{R}_{2}$ when $x^n-\alpha_0$ is irreducible over $\mathcal{R}_{2}.$
\vspace{-2mm}\begin{thm}   \label{RTD2} Let $\alpha=\alpha_0^{p^s} \in \mathcal{T}_{2}\setminus \{0\},$ where $\alpha_0 \in \mathcal{T}_{2}$ is such that $x^n-\alpha_0$ is irreducible over $\mathcal{R}_2.$ Let  $\mathcal{C}$ be an $\alpha$-constacyclic code of length $np^s$ over $\mathcal{R}_2$ (as determined  in Theorem \ref{thm2}). Then  the RT distance $d_{RT}(\mathcal{C})$ of the code $\mathcal{C}$ is given by the following:
\begin{enumerate} 
\vspace{-1mm}\item[(a)] If $\mathcal{C}=\{0\},$ then $d_{RT}(\mathcal{C})=0.$
\vspace{-1mm}\item[(b)] If $\mathcal{C}=\langle 1 \rangle,$ then $d_{RT}(\mathcal{C})=1.$
\vspace{-1mm}\item[(c)] If $\mathcal{C}=\langle \gamma (x^{n}-\alpha_0)^{\tau}\rangle$ is of the Type II, then $d_{RT}(\mathcal{C})=n\tau +1.$
\vspace{-1mm}\item[(d)] If $\mathcal{C}=\langle (x^{n}-\alpha_0)^{\omega}+\gamma(x^{n}-\alpha_0)^{t}G(x)\rangle$ is of the Type III, then $d_{RT}(\mathcal{C})=n\kappa +1.$
\vspace{-1mm}\item[(e)] If $\mathcal{C}=\langle (x^{n}-\alpha_0)^{\omega}+\gamma(x^{n}-\alpha_0)^{t}G(x), \gamma(x^{n}-\alpha_0)^{\mu} \rangle$ is of the Type III, then $d_{RT}(\mathcal{C})=n\mu +1.$
\end{enumerate}
\vspace{-3mm}\end{thm}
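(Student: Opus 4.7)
My plan is to handle parts (a) and (b) first as immediate consequences of the definition of $d_{RT}$, and then prove (c)--(e) with a uniform two-step argument: (i) exhibit a codeword achieving the claimed RT weight; (ii) show every non-zero codeword has RT weight at least this bound. Parts (a) and (b) are trivial: $\{0\}$ contains only the zero codeword (so $d_{RT}=0$), while $\langle 1\rangle$ contains the constant $1$ of RT weight $1$ (and no code has RT weight less than $1$).

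For (c)--(e), in each case the natural witness codeword is $\gamma(x^n-\alpha_0)^{\sigma}$, where $\sigma=\tau$ in (c), $\sigma=\kappa$ in (d), and $\sigma=\mu$ in (e). In (c) this element generates $\mathcal{C}$; in (d) it lies in $\mathcal{C}$ by the very definition of $\kappa$ (recall $\kappa$ is the smallest integer such that $\gamma f_j(x)^\kappa\in\mathcal{C}$); in (e) it is the second listed generator. Since $x^n-\alpha_0$ has degree $n$ and $\gamma\neq 0$, the polynomial $\gamma(x^n-\alpha_0)^{\sigma}$ has degree exactly $n\sigma$, giving RT weight $n\sigma+1$ and so $d_{RT}(\mathcal{C})\leq n\sigma+1$.

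For the matching lower bound, write an arbitrary $Q(x)\in\mathcal{C}\setminus\{0\}$ uniquely as $Q(x)=Q_0(x)+\gamma Q_1(x)$ with $Q_0(x),Q_1(x)\in\mathcal{T}_{2}[x]$ of degree less than $np^s$, so that $\deg Q(x)=\max\{\deg Q_0(x),\deg Q_1(x)\}$. Assume toward contradiction that $\deg Q(x)<n\sigma$, and hence both $\deg Q_0(x)<n\sigma$ and $\deg Q_1(x)<n\sigma$. Multiplying by $\gamma$ gives $\gamma Q_0(x)=\gamma Q(x)\in\mathcal{C}$, so $\overline{Q_0(x)}\in\mathrm{Tor}_\gamma(\mathcal{C})$. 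By direct inspection of the three cases in Theorem~\ref{thm2}, using the definition of $\kappa$ in case (III) and the Type IV form in case (e), we have $\mathrm{Tor}_\gamma(\mathcal{C})=\langle\overline{(x^n-\overline{\alpha_0})}^{\,\sigma}\rangle$ inside $\overline{\mathcal{R}_2}[x]/\langle\overline{(x^n-\overline{\alpha_0})}^{p^s}\rangle$. Since $x^n-\overline{\alpha_0}$ is irreducible over $\overline{\mathcal{R}_2}$, every non-zero element of this ideal, represented by a polynomial of degree less than $np^s$, must have degree at least $n\sigma$. Combined with $\deg Q_0(x)<n\sigma$, this forces $Q_0(x)=0$.

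Therefore $Q(x)=\gamma Q_1(x)\in\gamma\mathcal{R}_2[x]\cap\mathcal{C}$. Arguing exactly as in Case (i) of the proof of Theorem~\ref{thm2}, this intersection is the Type II ideal $\langle\gamma(x^n-\alpha_0)^{\sigma}\rangle$ (this is the ideal $\mathcal{I}_2$ from that proof, and the choice $\sigma\in\{\tau,\kappa,\mu\}$ matches in each of the three cases). Hence $Q(x)=\gamma(x^n-\alpha_0)^{\sigma}h(x)$ for some $h(x)\in\mathcal{T}_{2}[x]$, and since $Q(x)\neq 0$ we must have $h(x)\neq 0$, forcing $\deg Q(x)\geq n\sigma$, contradicting our assumption. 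Thus $w_{RT}(Q(x))\geq n\sigma+1$ for every $Q(x)\in\mathcal{C}\setminus\{0\}$, proving $d_{RT}(\mathcal{C})=n\sigma+1$. The one step that requires genuine care is the identification of $\mathrm{Tor}_\gamma(\mathcal{C})$ and of $\gamma\mathcal{R}_2[x]\cap\mathcal{C}$ with the common parameter $\sigma$ in the Type III setting; this is the point where the definition of $\kappa$ is essential and where cross-referencing the structural content of Theorem~\ref{thm2} and Propositions~\ref{charp}--\ref{charp2} is needed.
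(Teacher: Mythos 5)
Your proof is correct and follows essentially the same route as the paper's: both the witness $\gamma(x^n-\alpha_0)^{\sigma}$ and the lower bound rest on identifying $\mathcal{C}\cap\langle\gamma\rangle$ with $\langle\gamma(x^n-\alpha_0)^{\sigma}\rangle$ for $\sigma\in\{\tau,\kappa,\mu\}$, exactly as in the paper. The only cosmetic difference is that the paper obtains the lower bound directly from the inequality $w_{RT}(Q(x))\geq w_{RT}(\gamma Q(x))$ for $Q(x)\notin\langle\gamma\rangle$, whereas you argue by contradiction through $\mathrm{Tor}_{\gamma}(\mathcal{C})$; the underlying structural facts invoked are identical.
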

\begin{proof}  One can easily observe that $d_{RT}(\mathcal{C})=0$ when $\mathcal{C}=\{0\},$ and that  $d_{RT}(\mathcal{C})=1$ when  $\mathcal{C}=\langle 1 \rangle.$ 

To prove (c), let $\mathcal{C}=\langle \gamma (x^{n}-\alpha_0)^{\tau}\rangle$ be of the Type II. Here we see that $\mathcal{C}=\langle \gamma (x^{n}-\alpha_0)^{\tau}\rangle =\{ \gamma (x^{n}-\alpha_0)^{\tau}f(x) \  | \  f(x) \in \mathcal{T}_{2}[x]\}.$ For each non-zero $Q(x) \in \mathcal{C},$ we observe that $w_{RT}(Q(x)) \geq w_{RT}(\gamma (x^{n}-\alpha_0)^{\tau})=n\tau+1.$ As $w_{RT}(\gamma (x^{n}-\alpha_0)^{\tau})=n \tau+1,$ we obtain $d_{RT}(\mathcal{C}) = n\tau+1.$ 

To prove (d),  let $\mathcal{C}=\langle (x^{n}-\alpha_0)^{\omega}+\gamma(x^{n}-\alpha_0)^{t}G(x)\rangle$ be  of the Type III.  Here we recall that $\kappa$ is the smallest  integer satisfying $0 < \kappa \leq \omega$ and $\gamma (x^{n}-\alpha_0)^{\kappa} \in \langle (x^{n}-\alpha_0)^{\omega}+\gamma (x^{n}-\alpha_0)^t G(x)\rangle.$ We further note  that $\langle \gamma(x^{n}-\alpha_0)^{\kappa} \rangle = \mathcal{C}\cap \langle  \gamma \rangle.$ Since $\langle \gamma(x^{n}-\alpha_0)^{\kappa} \rangle \subseteq \mathcal{C},$ we have $d_{RT}(\langle \gamma(x^{n}-\alpha_0)^{\kappa}\rangle ) \geq d_{RT}(\mathcal{C}).$ Further, we observe that  $w_{RT}(Q(x)) \geq w_{RT}(\gamma Q(x))$ for each $Q(x) \in \mathcal{C} \setminus \langle \gamma \rangle,$ which implies that $w_{RT}(Q(x)) \geq d_{RT}(\langle \gamma(x^{n}-\alpha_0)^{\kappa}\rangle) $ for each $Q(x) \in \mathcal{C} \setminus \langle \gamma \rangle.$ From this, we obtain $d_{RT}( \mathcal{C}) = d_{RT}(\langle\gamma(x^{n}-\alpha_0)^{\kappa}\rangle).$  This, by part (c),  implies that  $d_{RT}(\mathcal{C})=n\kappa+1.$

To prove (e), let $\mathcal{C}=\langle (x^{n}-\alpha_0)^{\omega}+\gamma(x^{n}-\alpha_0)^{t}G(x), \gamma(x^{n}-\alpha_0)^{\mu}\rangle$ be of the Type IV. Here we note  that $\langle \gamma(x^{n}-\alpha_0)^{\mu} \rangle = \mathcal{C}\cap \langle  \gamma \rangle.$ Since $w_{RT}(Q(x)) \geq w_{RT}(\gamma Q(x))$ for each $Q(x) \in \mathcal{C} \setminus \langle \gamma \rangle,$ we get $w_{RT}(Q(x)) \geq d_{RT}(\langle\gamma(x^{n}-\alpha_0)^{\mu}\rangle) $ for each $Q(x) \in \mathcal{C} \setminus \langle \gamma \rangle.$ This implies that $d_{RT}( \mathcal{C}) \geq d_{RT}(\langle \gamma(x^{n}-\alpha_0)^{\mu}\rangle).$ Further, as $\langle \gamma(x^{n}-\alpha_0)^{\mu} \rangle \subseteq \mathcal{C},$ we have $d_{RT}(\langle\gamma(x^{n}-\alpha_0)^{\mu}\rangle) \geq d_{RT}(\mathcal{C}).$ This implies that $d_{RT}(\mathcal{C})=d_{RT}(\langle\gamma(x^{n}-\alpha_0)^{\mu}\rangle).$ This, by part (c), implies that  $d_{RT}(\mathcal{C})=n\mu+1.$

This completes the proof of the theorem.\vspace{-3mm}\end{proof}
In the following theorem, we determine RT weight distributions of all $\alpha$-constacyclic codes of length $np^s$ over $\mathcal{R}_{2}$ when $x^n-\alpha_0$ is irreducible over $\mathcal{R}_{2}.$
\vspace{-2mm}\begin{thm} \label{RTW2}Let $\alpha=\alpha_0^{p^s} \in \mathcal{T}_{2}\setminus \{0\},$ where $\alpha_0 \in \mathcal{T}_{2}$ is such that $x^n-\alpha_0$ is irreducible over $\mathcal{R}_2.$ Let  $\mathcal{C}$ be an $\alpha$-constacyclic code of length $np^s$ over $\mathcal{R}_2$ (as determined in Theorem \ref{thm2}). For $0 \leq \rho \leq np^s,$ let $\mathcal{A}_{\rho}$ denote the number of codewords in $\mathcal{C}$ having the RT weight as $\rho.$
\begin{enumerate}
\vspace{-2mm}\item[(a)] If $\mathcal{C}=\{0\},$ then we have $\mathcal{A}_0=1$ and $\mathcal{A}_\rho=0$ for $1 \leq \rho \leq np^s.$ \vspace{-2mm}\item[(b)] If $\mathcal{C}=\langle 1 \rangle,$ then $\mathcal{A}_0=1$ and $\mathcal{A}_\rho=(p^{2m}-1)p^{2m(\rho-1)}$ for $1 \leq \rho \leq np^s.$
\vspace{-2mm}\item[(c)] If $\mathcal{C}=\langle \gamma (x^{n}-\alpha_0)^{\tau}\rangle$ is of the Type II, then we have\vspace{-3mm} \begin{equation*}\mathcal{A}_\rho=\left\{\begin{array}{ll}
1 & \text{ if } \rho=0;\\
0 & \text{ if } 1\leq \rho \leq n\tau;\\
(p^{m}-1)p^{m(\rho-n\tau-1)}  & \text{ if } n\tau+1 \leq \rho \leq np^s.
\end{array}\right. \vspace{-3mm}\end{equation*}

\vspace{-2mm}\item[(d)] If $\mathcal{C}=\langle (x^{n}-\alpha_0)^{\omega}+\gamma(x^{n}-\alpha_0)^{t}G(x)\rangle$ is of the Type III, then we have \vspace{-4mm}\begin{equation*}\mathcal{A}_\rho=\left\{\begin{array}{ll}
1 & \text{ if } \rho=0;\\
0 & \text{ if } 1\leq \rho \leq n\kappa;\\
(p^{m}-1)p^{m(\rho-n\kappa-1)}  & \text{ if } n\kappa+1 \leq \rho \leq n\omega;\\
(p^{2m}-1)p^{m(2\rho-n\omega-n\kappa-2)} & \text{ if } n\omega+1 \leq \rho \leq np^s. \end{array}\right. \vspace{-3mm}\end{equation*}

\vspace{-2mm}\item[(e)] If $\mathcal{C}=\langle (x^{n}-\alpha_0)^{\omega}+\gamma(x^{n}-\alpha_0)^{t}G(x), \gamma(x^{n}-\alpha_0)^{\mu} \rangle$ is of the Type IV, then we have \vspace{-4mm}\begin{equation*}\mathcal{A}_\rho=\left\{\begin{array}{ll}
1 & \text{ if } \rho=0;\\
0 & \text{ if } 1\leq \rho \leq n\mu;\\
(p^{m}-1)p^{m(\rho-n\mu-1)}  & \text{ if } n\mu+1 \leq \rho \leq n\omega;\\
(p^{2m}-1)p^{m(2\rho-n\omega-n\mu-2)} & \text{ if } n\omega+1 \leq \rho \leq np^s. \end{array}\right. \vspace{-3mm}\end{equation*}
\end{enumerate}
\end{thm}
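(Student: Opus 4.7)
The plan is to handle the five cases separately, with parts (a) and (b) being essentially immediate. For (a), only the zero codeword exists, giving $\mathcal{A}_0 = 1$ and $\mathcal{A}_\rho = 0$ otherwise. For (b), $\mathcal{C} = \mathcal{R}_{\alpha,0}$ is the full ambient ring, and since every coefficient may be any element of $\mathcal{R}_2$ with $|\mathcal{R}_2| = p^{2m}$, a codeword of RT weight $\rho$ is specified by $p^{2m} - 1$ choices for the coefficient of $x^{\rho - 1}$ together with $p^{2m}$ choices for each of the $\rho - 1$ lower coefficients, giving $(p^{2m} - 1) p^{2m(\rho - 1)}$ codewords of weight $\rho$.

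For (c), every $Q(x) \in \mathcal{C} = \langle \gamma(x^n - \alpha_0)^\tau \rangle$ can be uniquely written as $Q(x) = \gamma(x^n - \alpha_0)^\tau B_Q(x)$ with $B_Q(x) \in \mathcal{T}_{2}[x]$ of degree at most $n(p^s - \tau) - 1$ or $B_Q(x) = 0$; the count $p^{mn(p^s - \tau)}$ matches $|\mathcal{C}|$ from Theorem \ref{thm3}. Since $\text{deg }Q(x) = n\tau + \text{deg }B_Q(x)$ for non-zero $Q$, the codewords of RT weight $\rho$ correspond bijectively to $B_Q$ of degree $\rho - n\tau - 1$, contributing $(p^m - 1) p^{m(\rho - n\tau - 1)}$ for $n\tau + 1 \leq \rho \leq np^s$ and $0$ for $1 \leq \rho \leq n\tau$.

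For parts (d) and (e), I would use the unique canonical decomposition of codewords already exploited in the proof of Theorem \ref{dthm2}: every $Q \in \mathcal{C}$ can be written uniquely as $Q(x) = \bigl((x^n - \alpha_0)^\omega + \gamma (x^n - \alpha_0)^t G(x)\bigr) A_Q(x) + \gamma (x^n - \alpha_0)^\sigma B_Q(x)$ with $A_Q(x), B_Q(x) \in \mathcal{T}_{2}[x]$ and appropriate degree bounds, taking $\sigma = \kappa$ in Type III and $\sigma = \mu$ in Type IV. The key observation, from Theorem \ref{thm2}, is that $\text{deg }G(x) < n(\sigma - t)$, so the subdominant term $\gamma(x^n-\alpha_0)^t G(x) A_Q(x)$ has degree strictly less than $n\sigma + \text{deg }A_Q(x) \leq n\omega + \text{deg }A_Q(x)$. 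Combined with the uniqueness of the representation $r = a + \gamma b$ with $a, b \in \mathcal{T}_{2}$ (so $\mathcal{T}_{2}$- and $\gamma \mathcal{T}_{2}$-coefficients at the same position cannot cancel), one obtains the clean formula $w_{RT}(Q) = 1 + \max\{n\omega + \text{deg }A_Q(x),\, n\sigma + \text{deg }B_Q(x)\}$ for every non-zero $Q$, with the convention $\text{deg }0 = -\infty$.

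With this formula in hand, the enumeration is by complementary counting: the condition $w_{RT}(Q) \leq \rho$ places independent constraints on $A_Q$ and $B_Q$, so the number of such codewords factors as a product. Specifically, it equals $1$ for $0 \leq \rho \leq n\sigma$, equals $p^{m(\rho - n\sigma)}$ for $n\sigma + 1 \leq \rho \leq n\omega$, and equals $p^{m(2\rho - n\omega - n\sigma)}$ for $n\omega + 1 \leq \rho \leq np^s$. Taking successive differences recovers the stated piecewise formulas for $\mathcal{A}_\rho$ in (d) and (e). The main obstacle will be justifying the clean maximum formula for $w_{RT}(Q)$, since one must carefully analyse the interaction between the $\mathcal{T}_{2}$-part and the $\gamma \mathcal{T}_{2}$-part of the coefficient at each degree; the strict degree bound on $G(x)$ and the Teichm\"uller decomposition of $\mathcal{R}_2$ are both essential to rule out any cancellations between the two summands in the decomposition, and once this is in place the rest reduces to routine polynomial counting.
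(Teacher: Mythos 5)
Your proposal is correct and follows essentially the same route as the paper: the same unique decomposition $Q(x)=((x^n-\alpha_0)^{\omega}+\gamma(x^n-\alpha_0)^tG(x))A_Q(x)+\gamma(x^n-\alpha_0)^{\sigma}B_Q(x)$ with $\sigma=\kappa$ or $\mu$, and the same observation that the degree bound on $G(x)$ forces the RT weight to be governed by $\max\{n\omega+\text{deg }A_Q(x),\,n\sigma+\text{deg }B_Q(x)\}$, which is exactly the two-case characterization the paper uses. The only cosmetic difference is that you count cumulatively and take differences where the paper enumerates the weight-$\rho$ codewords directly.
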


\begin{proof} Proofs of parts (a) and (b) are trivial. 

To prove (c), let $\mathcal{C}= \langle \gamma (x^{n}-\alpha_0)^{\tau}\rangle$ be of the Type II. Note that $\mathcal{A}_0=1.$ By Theorem \ref{RTD2}(c), we see that $d_{RT}(\mathcal{C})=n\tau+1,$ which implies that $\mathcal{A}_\rho=0$ for $1 \leq \rho \leq n\tau.$ Next let $n \tau +1 \leq \rho \leq np^s.$ Here we see that $\mathcal{C}= \langle \gamma (x^{n}-\alpha_0)^{\tau}\rangle=\{ \gamma (x^{n}-\alpha_0)^{\tau} F(x)\ |\ F(x) \in \mathcal{T}_{2}[x]\}.$  From this, we see that the codeword $\gamma (x^{n}-\alpha_0)^{\tau} F(x) \in \mathcal{C}$ has RT weight $\rho$ if and only if   $\text{ deg } F(x)=\rho-n\tau-1.$ This implies that $\mathcal{A}_\rho=(p^{m}-1)p^{m(\rho-n\tau-1)} $ for $n\tau+1 \leq \rho \leq np^s.$

To prove (d), let $\mathcal{C}=\langle (x^{n}-\alpha_0)^{\omega}+\gamma (x^{n}-\alpha_0)^t G(x)\rangle$ be of the Type III. Here by Theorem \ref{RTD2}, we see that  $d_{RT}(\mathcal{C})=n\kappa+1,$ which implies that $\mathcal{A}_\rho=0$ for $1\leq \rho \leq n\kappa.$ 
Next let $n\kappa + 1\leq \rho \leq n p^s.$ Here it is easy to observe that an element $Q(x)\in \mathcal{C}$ can be uniquely expressed as $Q(x)= ((x^{n}-\alpha_0)^{\omega}+\gamma (x^{n}-\alpha_0)^t G(x))A_Q(x)+\gamma (x^{n}-\alpha_0)^{\kappa}B_Q(x),$ where $A_Q(x),B_Q(x) \in \mathcal{T}_{2}[x]$ satisfy  $ \text{ deg }A_Q(x) \leq n(p^s-\omega)-1$ if $A_Q(x) \neq 0$ and $ \text{ deg }B_Q(x) \leq n(p^s-\kappa)-1$ if $B_Q(x) \neq 0.$   From this, we see that if $ n\kappa+1 \leq \rho \leq n\omega,$ then the RT weight of the codeword $Q(x) \in \mathcal{C}$ is $\rho$ if and only if  $A_Q(x)=0$ and $\text{ deg }B_Q(x)=\rho-n\kappa-1.$ This implies that $\mathcal{A}_\rho=(p^{m}-1)p^{m(\rho-n\kappa-1)}$ for $ n\kappa+1 \leq \rho \leq n\omega.$
Further, if $n\omega+1 \leq \rho \leq np^s,$ then the RT weight of the codeword $Q(x) \in \mathcal{C}$ is $\rho$ if and only if one of the following two conditions are satisfied: (i)  $\text{ deg }A_Q(x)=\rho-n\omega-1$ and $B_Q(x)$ is either 0 or $\text{deg }B_Q(x) \leq \rho-1-n\kappa$ and  (ii) $A_Q(x)$ is either 0 or $\text{deg }A_Q(x) \leq \rho-n \omega-2$ and $\text{deg }B_Q(x) =\rho-n\kappa-1.$ From this, we get $\mathcal{A}_\rho=(p^{2m}-1)p^{m(2\rho-n\omega-n\kappa-2)}$ for $n\omega+1 \leq \rho \leq np^s.$

To prove (e),  let $\mathcal{C}=\langle (x^{n}-\alpha_0)^{\omega}+\gamma (x^{n}-\alpha_0)^t G(x), \gamma (x^{n}-\alpha_0)^\mu \rangle$ be of the Type IV.  By Theorem \ref{RTD2}, we see that $d_{RT}(\mathcal{C})=n\mu+1,$ which implies that $\mathcal{A}_\rho=0$ for $1\leq \rho \leq n\mu.$ Next let $n \mu + 1 \leq \rho \leq np^s.$ Here we see that each codeword $Q(x)\in \mathcal{C}$ can be uniquely expressed as $Q(x)= ((x^{n}-\alpha_0)^{\omega}+\gamma (x^{n}-\alpha_0)^t G(x))M_Q(x)+\gamma (x^{n}-\alpha_0)^{\mu}W_Q(x),$ where $M_Q(x),W_Q(x) \in \mathcal{T}_{2}[x]$ satisfy  $ \text{ deg }M_Q(x) \leq n(p^s-\omega)-1$ if $M_Q(x)\neq 0$ and $ \text{ deg }W_Q(x) \leq n(p^s-\mu)-1$ if $W_Q(x)\neq 0.$   From this, we see that if $n\mu+1 \leq \rho \leq n\omega,$ then the codeword $Q(x) \in \mathcal{C}$ has  RT weight $\rho$ if and only if $M_Q(x)=0$ and $\text{ deg }W_Q(x)=\rho-n\mu-1.$ This implies that $\mathcal{A}_\rho=(p^{m}-1)p^{m(\rho-n\mu-1)}$ for $ n\mu+1 \leq \rho \leq n\omega.$
Further, if $n\omega+1 \leq \rho \leq np^s,$ then the RT weight of the codeword $Q(x) \in \mathcal{C}$ is $\rho$ if and only if  one of the following two conditions are satisfied: (i) $\text{ deg }M_Q(x)=\rho-n\omega-1$ and $W_Q(x)$ is either 0 or $\text{deg }W_Q(x) \leq \rho-1-n\mu$   and (ii) $M_Q(x)$ is either 0 or $\text{deg }M_Q(x) \leq \rho-n\omega-2$ and $\text{ deg }W_Q(x) =\rho-n\mu-1.$ This implies that $\mathcal{A}_\rho=(p^{2m}-1)p^{m(2\rho-n\omega-n\mu-2)}.$
 
This completes the proof of the theorem.\vspace{-2mm}\end{proof}

To illustrate our results, we shall determine all  cyclic and negacyclic codes of length $10$ over the Galois ring $GR(4,3)$ of characteristic $4$ and cardinality $4^3.$

\vspace{-1mm}\begin{ex} In order to write down all negacyclic codes of length $10$ over $GR(4,3),$  we observe that  an irreducible factorization of $x^5-1$ over $GR(4,3) $ is given by $x^5-1=(x+3)(x^4+x^3+x^2+x+1).$ Further, working as in the proof of  Lemma \ref{fac}, we see that $x^{10}+1=\{(x+3)^2+2 (x^5-2)\}\{(x^4+x^3+x^2+x+1)^2+2 (x^5-2)(3x^6+2x^4+x^2+2x)\}$ is a factorization of $x^{10}+1$ into coprime polynomials over $GR(4,3).$ Now by applying the Chinese Remainder Theorem, we get
 $\mathcal{R}_{1,1}=GR(4,3)[x]/\langle x^{10}+1 \rangle \simeq  \mathcal{K}_1 \oplus \mathcal{K}_2,$ where $\mathcal{K}_1= GR(4,3)[x]/\langle (x+3)^2+2 (x^5-2)\rangle$ and $\mathcal{K}_2= GR(4,3)[x]/\langle (x^4+x^3+x^2+x+1)^2+2 (x^5-2)(3x^6+2x^4+x^2+2x)\rangle.$ By Theorem \ref{thm1}, we note that all the ideals of $\mathcal{K}_1$ are given by $\langle (x+3)^i\rangle,$ $0 \leq i \leq 4$ and all the ideals of $\mathcal{K}_2$ are given by $\langle (x^4+x^3+x^2+x+1)^\ell\rangle,$ $0 \leq \ell \leq 4.$ From this and by applying Proposition \ref{p1}, we see that all negacyclic codes of length $10$ over $GR(4,3)$ are given by $\langle (x+3)^i\rangle \oplus \langle (x^4+x^3+x^2+x+1)^\ell\rangle,$ where $0 \leq i, \ell \leq 4.$ By Corollary \ref{c1}, we see that  the code $\langle 2 \rangle $ is a self-dual negacyclic code of length 10 over $GR(4,3).$ 
 \end{ex}
\vspace{-2mm} \begin{ex} Next we proceed to write down all cyclic codes of length $10$ over $GR(4,3),$ which are ideals of the ring $\mathcal{R}_{1,0}=GR(4,3)[x]/\langle x^{10}-1 \rangle.$  To do this, working as in the proof of Lemma \ref{fac}, we see that $x^{10}-1= \{(x+3)^2+2 (x^5-1)\}\{ (x^4+x^3+x^2+x+1)^2+2 (x^5-1)(3x^6+2x^4+x^2+2x)\}$ is a factorization of $x^{10}-1$ into coprime polynomials over $GR(4,3).$ Now by applying the Chinese Remainder Theorem, we obtain $\mathcal{R}_{1,0} \simeq \mathcal{K}_1 \oplus \mathcal{K}_2,$ where $\mathcal{K}_1=GR(4,3)[x]/\langle (x+3)^2+2 (x^5-1)\rangle $ and $\mathcal{K}_2=GR(4,3)[x]/\langle (x^4+x^3+x^2+x+1)^2+2 (x^5-1)(3x^6+2x^4+x^2+2x)\rangle.$
  Further, by applying Proposition \ref{p1}, all cyclic codes of length 10 over $GR(4,3)$ are given by $\mathcal{I}_{1}\oplus \mathcal{I}_{2},$ where $\mathcal{I}_{1}$ is an ideal of $\mathcal{K}_{1}$ and $\mathcal{I}_{2}$ is an ideal of $\mathcal{K}_{2}.$
If $\mathcal{T}_{2}=\{0,1,\xi, \xi^2\}$ is the Teichm\"{u}ller set of $GR(4,3),$ then by applying Theorem \ref{thm2}, we list all the ideals of $\mathcal{K}_1$ and $\mathcal{K}_{2}$ in Tables  ~\ref{table1} and \ref{table2}, respectively.  In Table~\ref{table3}, we list some self-dual cyclic codes of length 10 over $GR(4,3)$ by applying Corollary \ref{c2}.
 \vspace{-1mm}
\begin{table}[h]
\begin{minipage}{\textwidth}  
\centering 
\begin{tabular}{ |l|l|l|l|} 
\hline
 Trivial ideals &  Principal ideals & Non-principal ideals   \\ [0.5ex]
\hline 
$\{ 0\},$ $\mathcal{K}_1$ & $\langle 2\rangle,$ $\langle x+3 \rangle,$ $\langle 2x+2 \rangle,$ $\langle x+1 \rangle,$ $\langle x+3+2\xi \rangle,$  $\langle x+3+2 \xi^2\rangle$  & $\langle x+3, 2 \rangle$ \\ \hline
 \end{tabular}
\end{minipage}
\vspace{-3mm}\caption[Ideals of $\mathcal{K}_1$]{Ideals of $\mathcal{K}_1$}\label{table1}
\end{table}
\vspace{-7mm}\begin{table}[h] 
 \begin{minipage}{\textwidth} \centering
\begin{tabular}{ |l|l|l|l|} 
\hline
 Trivial ideals & Principal ideals & Non-principal ideals  \\ [0.5ex]
\hline 
$\{ 0\},$ $\mathcal{K}_2$  &$\langle 2\rangle,$ $\langle x^4+x^3+x^2+x+1 \rangle,$ $\langle 2x^4+2x^3+2x^2+2x+2\rangle$ & $\langle x^4+x^3+x^2+x+1, 2 \rangle$ \\
&  $\langle x^4+x^3+x^2+x+1+2G(x) \rangle$
  & \\ \hline
\end{tabular}\vspace{-3mm}\caption[Ideals of $\mathcal{K}_2$]{$^a$Ideals of $\mathcal{K}_2$}\label{table2}
\end{minipage}
\end{table}
 \vspace{-7mm}
\begin{table}[h]
\begin{minipage}{\textwidth}  
\begin{tabular}{ |l|l|l|}
\hline $\langle 2\rangle$ & $\langle 2\rangle\oplus \langle x^4+x^3+x^2+x+1 \rangle$ & $\langle 2\rangle\oplus \langle x^4+x^3+x^2+x+1+2G(x) \rangle$\\ \hline
$\langle x+3 \rangle\oplus \langle 2 \rangle$ & $\langle x+3 \rangle\oplus \langle x^4+x^3+x^2+x+1 \rangle$ & $\langle x+3 \rangle\oplus \langle x^4+x^3+x^2+x+1+2G(x) \rangle$ \\ \hline
$\langle x+1 \rangle\oplus \langle 2 \rangle$ & $\langle x+1 \rangle\oplus \langle x^4+x^3+x^2+x+1 \rangle$ & $\langle x+1 \rangle\oplus \langle x^4+x^3+x^2+x+1+2G(x) \rangle$\\ \hline
$\langle  x+3+2\xi \rangle\oplus \langle 2 \rangle$ & $\langle  x+3+2\xi \rangle\oplus \langle x^4+x^3+x^2+x+1 \rangle$ & $\langle  x+3+2\xi \rangle\oplus \langle x^4+x^3+x^2+x+1+2G(x) \rangle$\\ \hline
$\langle  x+3+2\xi^2 \rangle\oplus \langle 2 \rangle$ & $\langle  x+3+2\xi^2 \rangle\oplus \langle x^4+x^3+x^2+x+1 \rangle$ & $ \langle  x+3+2\xi^2 \rangle\oplus \langle x^4+x^3+x^2+x+1+2G(x) \rangle$\\
\hline
\end{tabular}\vspace{-3mm}\centering \caption[Some self-dual cyclic codes of length 10 over $GR(4,3)$]{\footnote{Here $G(x)$ runs over $\mathcal{P}_{4}(\mathcal{T}_{2})\setminus \{0\}.$}Some self-dual cyclic codes of length 10 over $GR(4,3)$}\label{table3}
\end{minipage}
\end{table}
\end{ex}

\vspace{-7mm}\section{Some more results on Hamming distances, RT distances and RT weight distributions of constacyclic codes over $\mathcal{R}_{e}$}\label{sec4}
Throughout this section, let $e \geq 2$ be an integer and let $\mathcal{R}_{e}$ be a finite commutative chain ring with nilpotency index $e.$ Let $\lambda= \theta+\gamma \omega,$ where $\theta \in \mathcal{T}_{e}$ and  $\omega$ is a unit in $\mathcal{R}_{e}.$ As $\theta \in \mathcal{T}_{e},$ by Proposition \ref{teich}(c), there exists $\lambda_0 \in \mathcal{T}_{e}$ satisfying $\theta=\lambda_0^{p^s}.$ 
 In this section, we shall determine the algebraic structures, Hamming distances,   RT distances,  and RT weight distributions of all $(\lambda_0^{p^s}+\gamma \omega)$-constacyclic codes of length $np^s$ over $\mathcal{R}_{e},$ where $\lambda_0 \in \mathcal{T}_{e}$ is such that the polynomial $x^n-\lambda_0$ is irreducible over $\mathcal{R}_{e}$ and  $\omega$ is a unit in $\mathcal{R}_{e}.$ 

 To do this, we see that Dinh et al. \cite[Th. 3.18]{dinh6} determined algebraic structures of all $\lambda$-constacyclic codes of length $p^s$ over $\mathcal{R}_{e}.$ In the following theorem, we extend this result to $\lambda$-constacyclic codes of length $np^s$ over $\mathcal{R}_{e},$ where $n$ is any positive integer.
\vspace{-2mm}\begin{thm}  \label{6thm1} \begin{enumerate}\item[(a)] The ring $\mathcal{R}_{\lambda} =\mathcal{R}_{e}[x]/\langle x^{np^s}-\lambda \rangle$ is a finite commutative chain ring with the unique maximal ideal as $\langle x^n-\lambda_0 \rangle.$\vspace{-2mm} \item[(b)] In the ring $\mathcal{R}_\lambda,$ we have $\langle (x^n-\lambda_0)^{p^s} \rangle= \langle \gamma \rangle,$ and the nilpotency index of $x^n-\lambda_0$ is $ep^s.$ 
\vspace{-2mm}\item[(c)] All the $\lambda$-constacyclic codes of length $np^s$ over $\mathcal{R}_{e}$ are given by $\langle (x^n-\lambda_0)^{\nu} \rangle,$ where $0 \leq \nu \leq ep^s.$  Moreover, for $0 \leq \nu \leq ep^s,$ the code $\langle (x^n-\lambda_0)^{\nu} \rangle$ has  $ p^{mn (ep^s-\nu)}$ codewords and its  dual code  $\langle (x^n-\lambda_0)^{\nu}\rangle$ is given by $\langle (x^n-\lambda_0^{-1})^{ep^s-\nu}\rangle.$\end{enumerate}
\vspace{-2mm}\end{thm}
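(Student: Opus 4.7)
The plan is to deduce parts (b) and (c) as formal consequences of part (a), and to prove (a) by first identifying $\mathfrak{m}=\langle x^n-\lambda_0,\gamma\rangle$ as the unique maximal ideal of $\mathcal{R}_\lambda$ and then collapsing it to the principal ideal $\langle x^n-\lambda_0\rangle$ via a binomial computation. To set this up, I would reduce modulo $\gamma$: using $\bar\lambda=\bar\lambda_0^{p^s}$ and Frobenius, $\mathcal{R}_\lambda/\langle\gamma\rangle\cong\mathbb{F}_{p^m}[x]/\langle(x^n-\bar\lambda_0)^{p^s}\rangle$. Since $x^n-\bar\lambda_0$ is irreducible over $\mathbb{F}_{p^m}$, Proposition \ref{pr1} identifies this quotient as a chain ring with maximal ideal $\langle x^n-\bar\lambda_0\rangle$, residue field $\mathbb{F}_{p^{mn}}$, and nilpotency index $p^s$. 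Pulling back, $\mathcal{R}_\lambda$ is local with maximal ideal $\mathfrak{m}=\langle x^n-\lambda_0,\gamma\rangle$.

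The crux is then to show $\gamma\in\langle(x^n-\lambda_0)^{p^s}\rangle$, which will force $\mathfrak{m}=\langle x^n-\lambda_0\rangle$ and complete (a) via Proposition \ref{pr1}. Expanding with the binomial theorem and $x^{np^s}\equiv\lambda_0^{p^s}+\gamma\omega$ yields
\begin{equation*}
(x^n-\lambda_0)^{p^s}=\gamma\omega+((-1)^{p^s}+1)\lambda_0^{p^s}+\sum_{i=1}^{p^s-1}\binom{p^s}{i}x^{ni}(-\lambda_0)^{p^s-i}
\end{equation*}
in $\mathcal{R}_\lambda$. By Proposition \ref{bino} each $\binom{p^s}{i}$ is a multiple of $p$, while $(-1)^{p^s}+1$ equals $0$ for $p$ odd and $p$ for $p=2$; hence the tail equals $pU(x)$ for some $U(x)\in\mathcal{R}_e[x]$. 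Writing $p=\gamma\xi$ in $\mathcal{R}_e$ gives $(x^n-\lambda_0)^{p^s}=\gamma(\omega+\xi U(x))$. To verify that $\omega+\xi U(x)$ is a unit it suffices to check its image in $\mathcal{R}_\lambda/\mathfrak{m}\cong\mathbb{F}_{p^{mn}}$ is nonzero. Evaluating $x^{ni}\mapsto\bar\lambda_0^{i}$ in the tail and invoking the identity $\sum_{i=0}^{p^s}\binom{p^s}{i}(-1)^{p^s-i}=0$, one finds $\overline{U(x)}=0$ in $\mathbb{F}_{p^{mn}}$, so the image of $\omega+\xi U(x)$ equals $\bar\omega\neq 0$. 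Therefore $\gamma=(\omega+\xi U(x))^{-1}(x^n-\lambda_0)^{p^s}$, which establishes (a) and the equality $\langle(x^n-\lambda_0)^{p^s}\rangle=\langle\gamma\rangle$. This binomial-cancellation step is the main obstacle; everything else is bookkeeping.

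With $\mathcal{R}_\lambda$ identified as a chain ring with principal maximal ideal $\langle x^n-\lambda_0\rangle$ satisfying $\langle(x^n-\lambda_0)^{p^s}\rangle=\langle\gamma\rangle$, the remaining claims follow from Proposition \ref{pr1}. Matching $|\mathcal{R}_\lambda|=|\mathcal{R}_e|^{np^s}=p^{mnep^s}$ against $|\mathcal{R}_\lambda/\mathfrak{m}|^N=p^{mnN}$ forces the nilpotency index $N=ep^s$ (completing (b)), and enumerates the ideals of $\mathcal{R}_\lambda$ as $\langle(x^n-\lambda_0)^\nu\rangle$ with $|\langle(x^n-\lambda_0)^\nu\rangle|=p^{mn(ep^s-\nu)}$ for $0\leq\nu\leq ep^s$. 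For the dual code, Lemma \ref{pr2} gives $\mathcal{C}^\perp=\mathrm{ann}(\mathcal{C})^*$; the annihilator of $\langle(x^n-\lambda_0)^\nu\rangle$ in the chain ring $\mathcal{R}_\lambda$ is $\langle(x^n-\lambda_0)^{ep^s-\nu}\rangle$, and since $(x^n-\lambda_0)^*=-\lambda_0(x^n-\lambda_0^{-1})$ differs from $x^n-\lambda_0^{-1}$ only by a unit, taking reciprocals produces $\langle(x^n-\lambda_0^{-1})^{ep^s-\nu}\rangle$, as required.
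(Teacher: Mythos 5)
Your proposal is correct, and it supplies a complete argument where the paper itself offers none: the paper's entire proof of Theorem \ref{6thm1} is a one-line deferral to Theorem 3.18 of Dinh et al.\ \cite{dinh6} (the case $n=1$), asserting that the same method extends. The route you take --- reduce modulo $\gamma$ to see $\mathcal{R}_\lambda$ is local with maximal ideal $\langle x^n-\lambda_0,\gamma\rangle$, then use the binomial expansion of $(x^n-\lambda_0)^{p^s}$ together with the unit hypothesis on $\omega$ to absorb $\gamma$ into $\langle x^n-\lambda_0\rangle$, and finally invoke Proposition \ref{pr1} and Lemma \ref{pr2} for the ideal lattice, sizes and duals --- is exactly the standard mechanism behind that cited result, so your proof is in the same spirit rather than a genuinely different route. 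One step deserves to be made explicit: writing the tail as $pU(x)$ only pins down $\overline{U(x)}$ if you fix $U(x)$ to be the polynomial whose coefficients are the \emph{integers} $\binom{p^s}{i}/p$ (and $((-1)^{p^s}+1)/p$) times the appropriate powers of $-\lambda_0$; since $p$ is a zero divisor (or zero) in $\mathcal{R}_e$, the relation $pU(x)=\text{tail}$ alone says nothing about $\overline{U(x)}$. With that choice, the vanishing of $\overline{U(x)}$ follows from dividing the integer identity $((-1)^{p^s}+1)+\sum_{i=1}^{p^s-1}\binom{p^s}{i}(-1)^{p^s-i}=0$ by $p$ in $\mathbb{Z}$ before reducing, and the rest of your argument goes through; in particular the image of $\omega+\xi U(x)$ in the residue field is $\overline{\omega}\neq 0$, as you claim. (Note also that the characteristic-$p$ case is degenerate here, since then the tail is simply $0$.) Everything else --- the nilpotency index from the cardinality count, the annihilator computation in a chain ring, and the passage to the reciprocal ideal via the unit $-\lambda_0$ --- is correct.
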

\vspace{-3mm}\begin{proof} Working in a similar manner as in Theorem 3.18 of Dinh et al. \cite{dinh6}, the result follows.
\vspace{-2mm}\end{proof}
  
In the following theorem, we determine Hamming distances of all $\lambda$-constacyclic codes of length $np^s$ over $\mathcal{R}_{e}.$ It generalizes Theorem 4.3 of Dinh et al. \cite{dinh8}.
  
\vspace{-2mm}\begin{thm}\label{6thm2} Let $\mathcal{C}=\langle (x^n-\lambda_0)^{\nu} \rangle$ be a $\lambda$-constacyclic code of length $np^s$ over $\mathcal{R}_{e},$ where $0 \leq \nu \leq ep^s.$ Then the Hamming distance $d_H(\mathcal{C})$ of  $\mathcal{C}$ is given by 
\vspace{-2mm}\begin{equation*}d_H(\mathcal{C})=\left\{\begin{array}{ll}
1 & \text{ if } 0 \leq \nu \leq (e-1)p^s;\\
\ell+2 & \text{ if } (e-1)p^s+\ell p^{s-1}+1 \leq \nu \leq (e-1)p^s+(\ell +1)p^{s-1} \text{ with } 0 \leq \ell \leq p-2;\\
(i+1)p^k & \text{ if } ep^s-p^{s-k}+(i-1)p^{s-k-1}+1\leq \nu \leq ep^s-p^{s-k}+ip^{s-k-1} \\ & \text{ with } 1\leq i \leq p-1 \text{ and } 1 \leq k \leq s-1;\\
0  & \text{ if } \nu=ep^s.
\end{array}\right. \vspace{-2mm}\end{equation*}
\end{thm}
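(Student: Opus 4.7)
My plan partitions $\{0, 1, \ldots, ep^s\}$ into three pieces and handles each separately. The edge case $\nu = ep^s$ is immediate from Theorem \ref{6thm1}(b), which identifies the nilpotency index of $x^n-\lambda_0$ in $\mathcal{R}_\lambda$ as $ep^s$: so $\mathcal{C} = \{0\}$ and $d_H(\mathcal{C}) = 0$.

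For $0 \le \nu \le (e-1)p^s$, I would simply exhibit a weight-$1$ codeword. Theorem \ref{6thm1}(b) says $\langle (x^n-\lambda_0)^{p^s}\rangle = \langle \gamma \rangle$ in $\mathcal{R}_\lambda$, so $(x^n-\lambda_0)^{p^s} = u\gamma$ for some unit $u \in \mathcal{R}_\lambda$, and raising to the $(e-1)$-th power yields $(x^n-\lambda_0)^{(e-1)p^s} = u^{e-1}\gamma^{e-1}$. Hence $\gamma^{e-1} = u^{-(e-1)}(x^n-\lambda_0)^{(e-1)p^s}$ lies in $\langle (x^n-\lambda_0)^{(e-1)p^s}\rangle \subseteq \mathcal{C}$; being a nonzero constant polynomial, it has Hamming weight $1$, forcing $d_H(\mathcal{C}) = 1$.

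The real work lies in the middle region $(e-1)p^s + 1 \le \nu \le ep^s - 1$. Setting $\nu' = \nu - (e-1)p^s \in \{1, \ldots, p^s - 1\}$, the identity above gives $\mathcal{C} = \langle \gamma^{e-1}(x^n-\lambda_0)^{\nu'}\rangle$. I would then introduce the coefficient-wise reduction map $\psi : \gamma^{e-1}\mathcal{R}_\lambda \to \mathbb{F}_{p^m}[x]/\langle x^{np^s} - \overline{\lambda_0}^{p^s}\rangle$ sending $\gamma^{e-1}\sum_i t_i x^i \mapsto \sum_i \overline{t_i} x^i$ for $t_i \in \mathcal{T}_e$. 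Two ingredients make this work. First, the annihilator of $\gamma^{e-1}$ in $\mathcal{R}_e$ is $\langle\gamma\rangle$, which means $\gamma^{e-1}t_i \neq 0 \iff \overline{t_i} \neq 0$, so $\psi$ is a Hamming-weight-preserving bijection. Second, the defining relation $x^{np^s} = \lambda_0^{p^s} + \gamma\omega$ in $\mathcal{R}_\lambda$ becomes $\gamma^{e-1}x^{np^s} = \gamma^{e-1}\lambda_0^{p^s}$ after the $\gamma^{e-1}$ multiplication (since $\gamma^e = 0$), so reduction modulo $x^{np^s} - \lambda$ agrees with reduction modulo $x^{np^s} - \lambda_0^{p^s}$ on $\gamma^{e-1}\mathcal{R}_\lambda$. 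Together these force $\psi(\mathcal{C})$ to be precisely the $\overline{\lambda_0}^{p^s}$-constacyclic code $\langle (x^n-\overline{\lambda_0})^{\nu'}\rangle$ of length $np^s$ over $\mathbb{F}_{p^m}$, whence $d_H(\mathcal{C})$ equals the Hamming distance of that code. Since $x^n - \lambda_0$ is irreducible over $\mathcal{R}_e$, Lemma \ref{coprime}(a) makes $x^n - \overline{\lambda_0}$ irreducible over $\mathbb{F}_{p^m}$, so Theorem \ref{dthm} applies with $\upsilon = \nu'$; substituting $\nu = \nu' + (e-1)p^s$ into its piecewise formula reproduces the three remaining cases of the statement.

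The main obstacle is the bookkeeping behind the bijection $\psi$: one must verify that $\psi$ is well-defined, Hamming-weight-preserving, and carries $\mathcal{C}$ \emph{onto} (not merely into) the claimed constacyclic code over $\mathbb{F}_{p^m}$. Surjectivity rests on the observation that, as the $\mathcal{R}_\lambda$-multiplier $r(x)$ in $\gamma^{e-1}(x^n-\lambda_0)^{\nu'} r(x)$ ranges over $\mathcal{R}_\lambda$, its reduction $\overline{r(x)}$ sweeps out all of $\mathbb{F}_{p^m}[x]/\langle x^{np^s} - \overline{\lambda_0}^{p^s}\rangle$, and that higher-order $\gamma$-terms in $r(x)$ are absorbed by the $\gamma^{e-1}$ prefactor. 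No step is individually deep once Theorem \ref{dthm} is available, but these pieces must be assembled in the order above.
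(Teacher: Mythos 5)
Your proposal is correct and follows essentially the same route as the paper: the paper proves this by the argument of Theorem \ref{dthm1}, namely showing $\gamma^{e-1}\in\mathcal{C}$ for $\nu\leq(e-1)p^s$, rewriting $\mathcal{C}=\langle\gamma^{e-1}(x^n-\lambda_0)^{\nu-(e-1)p^s}\rangle$ in the middle range and identifying its Hamming distance with that of the residue-field code $\langle(x^n-\overline{\lambda_0})^{\nu-(e-1)p^s}\rangle$, then invoking Theorem \ref{dthm}. Your explicit construction of the weight-preserving bijection $\psi$ merely fills in a step the paper leaves as an observation.
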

\begin{proof} Working in a similar way as in Theorem \ref{dthm1}, the result follows.
\vspace{-2mm}\end{proof}

In the following theorem, we determine RT  distances of all $\lambda$-constacyclic codes of length $np^s$ over $\mathcal{R}_{e}.$ It generalizes Theorem 6.2 of Dinh et al. \cite{dinh8}.

\vspace{-2mm}\begin{thm}\label{6thm3} Let $\mathcal{C}=\langle (x^n-\lambda_0)^{\nu} \rangle$ be a $\lambda$-constacyclic code of length $np^s$ over $\mathcal{R}_{e},$ where $0 \leq \nu \leq ep^s.$ Then the RT distance $d_{RT}(\mathcal{C})$ of  $\mathcal{C}$ is given by
\vspace{-2mm}\begin{equation*}d_{RT}(\mathcal{C})=\left\{\begin{array}{ll}
1 & \text{ if } 0 \leq \nu \leq (e-1)p^s;\\
n\nu-n(e-1)p^{s}+1  & \text{ if } (e-1)p^s+1 \leq \nu \leq ep^s-1;\\
0  & \text{ if } \nu =ep^s.
\end{array}\right. \vspace{-2mm}\end{equation*}
\end{thm}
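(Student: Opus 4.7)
The plan is to mimic the structure of the proof of Theorem \ref{dRthm1}, with the role of $\gamma$ replaced by $\gamma^{e-1}$, leveraging the chain-ring structure of $\mathcal{R}_{\lambda}$ provided by Theorem \ref{6thm1}. The three cases of the statement correspond naturally to $\mathcal{C}=\{0\}$, to $\gamma^{e-1}\in\mathcal{C}$, and to the range where $\mathcal{C}$ sits strictly inside $\langle\gamma^{e-1}\rangle$.

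For the trivial case $\nu=ep^{s}$, Theorem \ref{6thm1}(b) gives $(x^{n}-\lambda_{0})^{ep^{s}}=0$ in $\mathcal{R}_{\lambda}$, whence $\mathcal{C}=\{0\}$ and $d_{RT}(\mathcal{C})=0$. For $0\le\nu\le(e-1)p^{s}$, I would iterate the identity $\langle(x^{n}-\lambda_{0})^{p^{s}}\rangle=\langle\gamma\rangle$ from Theorem \ref{6thm1}(b) to obtain $\langle(x^{n}-\lambda_{0})^{(e-1)p^{s}}\rangle=\langle\gamma^{e-1}\rangle$. Since $\nu\le(e-1)p^{s}$, the element $\gamma^{e-1}$ lies in $\mathcal{C}$; as $\gamma^{e-1}\neq 0$ (the nilpotency index of $\gamma$ equals $e$), it is a non-zero constant of RT weight $1$, forcing $d_{RT}(\mathcal{C})=1$.

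For $(e-1)p^{s}+1\le\nu\le ep^{s}-1$, write $\nu'=\nu-(e-1)p^{s}$, so $1\le\nu'\le p^{s}-1$. From $\langle(x^{n}-\lambda_{0})^{(e-1)p^{s}}\rangle=\langle\gamma^{e-1}\rangle$ one gets the ideal identity $\mathcal{C}=\langle(x^{n}-\lambda_{0})^{\nu}\rangle=\langle\gamma^{e-1}(x^{n}-\lambda_{0})^{\nu'}\rangle$. Using $\gamma^{e-1}\gamma=0$, every element of $\mathcal{C}$ admits a representative of the form $\gamma^{e-1}(x^{n}-\lambda_{0})^{\nu'}f(x)$ with $f(x)\in\mathcal{T}_{e}[x]$ satisfying $\deg f(x)<n(p^{s}-\nu')$; the counting $|\mathcal{T}_{e}|^{n(p^{s}-\nu')}=p^{mn(p^{s}-\nu')}=p^{mn(ep^{s}-\nu)}=|\mathcal{C}|$ from Theorem \ref{6thm1}(c) confirms both surjectivity and uniqueness. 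For any non-zero such codeword, the formal degree $n\nu'+\deg f(x)$ is strictly less than $np^{s}$, so no reduction modulo $x^{np^{s}}-\lambda$ occurs; moreover the leading coefficient $\gamma^{e-1}\cdot(\text{lead of }f)$ is non-zero because the non-zero elements of $\mathcal{T}_{e}$ are units in $\mathcal{R}_{e}$. Hence $w_{RT}(\gamma^{e-1}(x^{n}-\lambda_{0})^{\nu'}f(x))=n\nu'+\deg f(x)+1\ge n\nu'+1$, with equality realised by a non-zero constant $f(x)$. This yields $d_{RT}(\mathcal{C})=n\nu'+1=n\nu-n(e-1)p^{s}+1$.

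The main obstacle I expect is justifying the normal form $\mathcal{C}=\{\gamma^{e-1}(x^{n}-\lambda_{0})^{\nu'}f(x):f(x)\in\mathcal{T}_{e}[x],\ \deg f(x)<n(p^{s}-\nu')\}$ carefully, since equality of principal ideals in a chain ring does not automatically produce generators differing by a unit. I would resolve this by the double inclusion $\langle(x^{n}-\lambda_{0})^{\nu}\rangle\subseteq\langle\gamma^{e-1}(x^{n}-\lambda_{0})^{\nu'}\rangle$ (multiplying $(x^{n}-\lambda_{0})^{\nu}=(x^{n}-\lambda_{0})^{(e-1)p^{s}}\cdot(x^{n}-\lambda_{0})^{\nu'}\in\langle\gamma^{e-1}\rangle\cdot(x^{n}-\lambda_{0})^{\nu'}$) together with the reverse inclusion obtained from writing $\gamma^{e-1}=v(x)(x^{n}-\lambda_{0})^{(e-1)p^{s}}$ for some $v(x)\in\mathcal{R}_{\lambda}$, and then matching cardinalities via Theorem \ref{6thm1}(c) to discard redundant representatives.
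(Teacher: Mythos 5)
Your proposal is correct and follows essentially the same route as the paper: the paper proves Theorem \ref{6thm3} by referring back to the argument of Theorem \ref{dRthm1}, which is exactly your three-case analysis ($\mathcal{C}=\{0\}$; $\gamma^{e-1}\in\mathcal{C}$ forcing $d_{RT}=1$; and $\mathcal{C}=\langle\gamma^{e-1}(x^n-\lambda_0)^{\nu-(e-1)p^s}\rangle$ with the minimum RT weight realised by a constant multiplier), with $\gamma^{e-1}$ playing the role that $\gamma$ plays in the $e=2$ case. Your extra care with the normal form (degree bound, cardinality count, and the unit leading coefficients from $\mathcal{T}_e$) only makes explicit what the paper leaves implicit.
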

\begin{proof} Working in a similar manner as in Theorem \ref{dRthm1}, the result follows.
\vspace{-2mm}\end{proof}

In a recent work, Dinh et al. \cite[Prop. 6.3-6.5]{dinh8} determined RT weight distributions of all  $(4z-1)$-constacyclic codes of length $2^s$ over the Galois ring $GR(2^e, m)$ of characteristic $2^e$ and cardinality $2^{em},$ where $z$ is a unit in $GR(2^e,m).$  However, we noticed an error in Proposition 6.5 of Dinh et al. \cite{dinh8}, which we illustrate in the following example. 
\vspace{-1mm}\begin{ex}\label{ex} Let $GR(4,1)$ be  the Galois ring of characteristic $4$ and cardinality $4,$ and let $\lambda=3.$ By Theorem 3.3 of Dinh et al.\cite{dinh8}, we see that all the $3$-constacyclic codes of length $4$ over $GR(4,1)$ are ideals  of $GR(4,1)[x]/\langle x^4-3 \rangle,$ and are given by $\langle (x+1)^{i} \rangle ,$ where $ 0 \leq i \leq 8.$ For the code $\mathcal{C}_{1}=\langle x+1 \rangle,$ by Proposition 6.5 of Dinh et al. \cite{dinh8}, we obtain  $\mathcal{A}_0=1,$ $\mathcal{A}_1=1,$ $\mathcal{A}_2=18,$ $\mathcal{A}_3=36$ and $\mathcal{A}_4=72.$ However, by carrying out  computations in Magma, we see that the actual values of $\mathcal{A}_{2},$ $\mathcal{A}_{3}$ and $\mathcal{A}_{4}$ are given by  $\mathcal{A}_2=6,$ $\mathcal{A}_3=24$ and $\mathcal{A}_4=96,$ which do not agree with Proposition 6.5 of  Dinh et al. \cite{dinh8}.

Moreover, for the code $\mathcal{C}_2=\langle (x+1)^{2} \rangle,$ by Proposition 6.5 of Dinh et al. \cite{dinh8}, we obtain  $\mathcal{A}_0=1,$ $\mathcal{A}_1=1,$ $\mathcal{A}_2=2,$  $\mathcal{A}_3=20$ and $\mathcal{A}_4=40.$ However, by carrying out computations in Magma, we see that the actual values of $\mathcal{A}_{3}$ and $\mathcal{A}_{4}$ are given by $\mathcal{A}_3=12$ and $\mathcal{A}_4=48.$ This shows that there is an error in Proposition 6.5. of Dinh et al. \cite{dinh8}.\end{ex}

In the following theorem, we shall rectify the error in Proposition 6.5 of Dinh et al. \cite{dinh8} and we shall extend Propositions 6.3-6.5 of Dinh et al. \cite{dinh8} to determine RT weight distributions of all $\lambda$-constacyclic codes of length $np^s$ over $\mathcal{R}_{e}.$ 

\vspace{-2mm}\begin{thm} \label{6thmRT} Let $\mathcal{C}=\langle (x^n-\lambda_0)^{\nu} \rangle$ be a $\lambda$-constacyclic code of length $np^s$ over $\mathcal{R}_{e},$ where $0 \leq \nu \leq ep^s.$ For $0 \leq \rho \leq np^s,$ let $\mathcal{A}_\rho$ denote the number of codewords in $\mathcal{C}$ having the  RT weight as $\rho.$  
\begin{enumerate}
\vspace{-2mm}\item[(a)] For $\nu=ep^s,$ we have 
\vspace{-2mm}\begin{equation*}\mathcal{A}_\rho=\left\{\begin{array}{ll}
1 & \text{ if } \rho=0;\\
0  & \text{ otherwise}.
\end{array}\right. \vspace{-2mm}\end{equation*}
\vspace{-4mm}\item[(b)] For $(e-1)p^s+1 \leq \nu \leq ep^s-1,$ we have 
\vspace{-3mm}\begin{equation*}\mathcal{A}_\rho=\left\{\begin{array}{ll}
1 & \text{ if } \rho=0;\\
0  & \text{ if } 1 \leq \rho \leq n\nu-n(e-1)p^s ;\\
(p^m-1)p^{m(\rho-n\nu+n(e-1)p^s-1)}  & \text{ if }  n\nu-n(e-1)p^s+1 \leq \rho \leq np^s.
\end{array}\right. \vspace{-2mm}\end{equation*}
\vspace{-4mm}\item[(c)] For $ \nu =y p^s$ with $0 \leq y \leq e-1,$  we have 
\vspace{-3mm}\begin{equation*}\mathcal{A}_\rho=\left\{\begin{array}{ll}
1 & \text{ if } \rho=0;\\
(p^{m(e-y)}-1)p^{m(e-y)(\rho-1)}  & \text{ if } 1 \leq \rho \leq np^s.
\end{array}\right. \vspace{-2mm}\end{equation*}
\vspace{-4mm}\item[(d)] For $(b-1)p^s+1 \leq \nu \leq b p^s-1$ with $1 \leq b \leq e-1,$ we have 
\vspace{-3mm}\begin{equation*}\mathcal{A}_\rho=\left\{\begin{array}{ll}
1 & \text{ if } \rho=0;\\
(p^{m(e-b)}-1)p^{m(e-b)(\rho-1)}  & \text{ if } 1 \leq \rho \leq n\nu-n(b-1)p^s ;\\
p^{m(np^s(b-1)-n\nu-e+b-1)}(p^{m(e-b+1)}-1)p^{m(e-b+1)\rho}  & \text{ if }  n\nu-n(b-1)p^s+1 \leq \rho \leq np^s.
\end{array}\right. \vspace{-2mm}\end{equation*}
\end{enumerate}
\vspace{-3mm}\end{thm}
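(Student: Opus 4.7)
The guiding principle of the proof will be to find, for each value of $\nu$, a canonical parametrization of the codewords of $\mathcal{C}=\langle (x^n-\lambda_0)^{\nu}\rangle$ whose polynomial representative has a transparent leading term, so that the RT weight distribution becomes a direct counting problem. The key structural fact from Theorem \ref{6thm1}(b) is that $\langle (x^n-\lambda_0)^{p^s}\rangle=\langle\gamma\rangle$, so $(x^n-\lambda_0)^{p^s}=\gamma u$ for some unit $u\in\mathcal{R}_{\lambda}$. Writing $\nu=(b-1)p^s+k$ with $0\leq k<p^s$, this at once gives $\mathcal{C}=\langle\gamma^{b-1}(x^n-\lambda_0)^{k}\rangle$; I would record this reformulation once and use it throughout.

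Parts (a) and (c) are direct: $\mathcal{C}=\{0\}$ in (a), and in (c) $\mathcal{C}=\langle\gamma^{y}\rangle$ so each codeword is uniquely $\gamma^{y}h(x)$ where $h$ is a polynomial of degree $<np^s$ whose coefficients range over a fixed set of $p^{m(e-y)}$ representatives of $\mathcal{R}_{e}/\langle\gamma^{e-y}\rangle$, yielding the leading-coefficient/free-coefficient count immediately. Part (b) uses the cleaner generator $\gamma^{e-1}(x^n-\lambda_0)^{k}$: codewords are uniquely $\gamma^{e-1}(x^n-\lambda_0)^{k}f(x)$ with $f\in\mathcal{T}_{e}[x]$ and $\deg f<n(p^s-k)$, uniqueness following because multiplication by $\gamma^{e-1}$ has kernel $\langle\gamma\rangle$ combined with Teichm\"uller uniqueness from Proposition \ref{teich}(b). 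Since $nk+\deg f<np^s$, no reduction modulo $x^{np^s}-\lambda$ is required, so the RT weight is $nk+\deg f+1=n\nu-n(e-1)p^s+\deg f+1$, and counting $f$ of each fixed degree produces the formula.

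The substance lies in part (d). I would show that every codeword admits a unique decomposition
\[
Q(x)=\gamma^{b-1}(x^n-\lambda_0)^{k}A(x)+\gamma^{b}B(x),
\]
with $A\in\mathcal{T}_{e}[x]$, $\deg A<n(p^s-k)$, and $B$ a polynomial of degree $<np^s$ whose coefficients lie in a set $S$ of $p^{m(e-b)}$ representatives of $\mathcal{R}_{e}/\langle\gamma^{e-b}\rangle$. Uniqueness comes from reducing modulo $\gamma^{b}$ and exploiting that multiplication by $(x^n-\bar\lambda_0)^{k}$ on $\overline{\mathcal{R}}_{e}[x]/\langle(x^n-\bar\lambda_0)^{p^s}\rangle$ has kernel supported in degrees $\geq n(p^s-k)$; a cardinality check $p^{mn(p^s-k)}\cdot p^{mnp^s(e-b)}=p^{mn(ep^s-\nu)}$ confirms exhaustiveness. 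Then $\deg Q=\max(nk+\deg A,\deg B)$, the only delicate subcase being the tie $nk+\deg A=\deg B$, where the combined leading coefficient is $\gamma^{b-1}\ell+\gamma^{b}\ell'$ with $\ell\in\mathcal{T}_{e}\setminus\{0\}$; by Proposition \ref{teich}(b) the Teichm\"uller expansion is unique, so this stays nonzero. Finally $\mathcal{A}_{\rho}$ is assembled by splitting the contributions into those with $A=0$ and those with $A\neq 0$ of each fixed degree $\delta$, summing using the geometric identity $\sum_{\delta=0}^{r}(p^m-1)p^{m\delta}=p^{m(r+1)}-1$; the regime break at $\rho=nk+1=n\nu-n(b-1)p^s+1$ (consistent with $d_{RT}(\mathcal{C})$ from Theorem \ref{6thm3}) produces the two cases in (d). The main obstacle, and precisely where I expect the error in Proposition 6.5 of Dinh et al.~\cite{dinh8} originates, is tracking the interaction between the $A$-summand and the $B$-summand at this degree-tie, which must be carried out without double-counting and with careful attention to the leading-coefficient cancellation that does \emph{not} occur.
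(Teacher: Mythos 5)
Your proposal follows essentially the same route as the paper: the same reduction $\mathcal{C}=\langle\gamma^{b-1}(x^n-\lambda_0)^{\nu-(b-1)p^s}\rangle$ via $\langle(x^n-\lambda_0)^{p^s}\rangle=\langle\gamma\rangle$, the same unique decomposition $Q(x)=\gamma^{b-1}(x^n-\lambda_0)^{k}A(x)+\gamma^{b}B(x)$ in part (d), and the same disjoint case split according to which summand realizes the top degree. Your added care about the uniqueness of the parametrization and the non-cancellation of the leading coefficient in the degree-tie case tightens a point the paper leaves implicit, but it is not a different argument.
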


\begin{proof}  It is easy to see that $\mathcal{A}_{0}=1.$ So from now on, we assume that $1 \leq \rho \leq np^s.$
\begin{enumerate} 
\vspace{-2mm}\item[(a)] When $\nu =ep^s,$ we have $\mathcal{C}=\{ 0\}.$ From this, we get $\mathcal{A}_\rho=0$ for $1 \leq \rho \leq n p^s.$ 
\vspace{-2mm}\item[(b)] In this case, by Theorem \ref{6thm3}, we see that $d_{RT}(\mathcal{C})=n\nu-n(e-1)p^{s}+1,$ which implies that $\mathcal{A}_{\rho}=0$ for $1 \leq \rho \leq n\nu -n(e-1)p^s.$ Next let $n\nu-n(e-1)p^s+1 \leq \rho \leq np^s.$ Here by Theorem \ref{6thm1}(b), we note that $\langle (x^n-\lambda_0)^{p^s} \rangle= \langle \gamma \rangle,$ which implies that $\mathcal{C}=\langle \gamma^{e-1}(x^n-\lambda_0)^{\nu-(e-1)p^s} \rangle=\{\gamma^{e-1}(x^n-\lambda_0)^{\nu-(e-1)p^s} F(x) \ | \ F(x) \in \mathcal{T}_{e}[x]\}.$  From this, we see that the RT weight of the codeword $\gamma^{e-1}(x^n-\lambda_0)^{\nu-(e-1)p^s} F(x) \in \mathcal{C}$ is $\rho$ if and only if  $\text{ deg }F(x)=\rho-n\nu+n(e-1)p^s-1.$ This implies that $\mathcal{A}_\rho=(p^m-1)p^{m(\rho-n\nu+n(e-1)p^s-1)}.$
\vspace{-2mm}\item[(c)] Next let $ \nu =y p^s,$ where $0 \leq y \leq e-1.$ By Theorem \ref{6thm1}(b), we see that $\mathcal{C}=\langle (x^n-\lambda_0)^{yp^s} \rangle=\langle \gamma^{y}\rangle= \{\gamma^{y} F(x) \ | \ F(x) \in \mathcal{R}_e[x]\}.$ From this, we see that $\mathcal{A}_\rho=(p^{m(e-y)}-1)p^{m(e-y)(\rho-1)}$ for $1 \leq \rho \leq np^s.$
\vspace{-2mm}\item[(d)] Finally, let $(b-1)p^s+1 \leq \nu \leq b p^s-1,$ where $1 \leq b \leq e-1.$ As $\langle (x^n-\lambda_0)^{p^s} \rangle= \langle \gamma \rangle,$ we see that $\mathcal{C}=\langle \gamma^{b-1}(x^n-\lambda_0)^{\nu-(b-1)p^s} \rangle$ and $\langle \gamma^{b} \rangle \subseteq \mathcal{C}.$ Further, we observe that each element $Q(x) \in \mathcal{C}$ can be uniquely written as $Q(x) =\gamma^{b-1}(x^n-\lambda_0)^{\nu-(b-1)p^s}F_{Q}(x) +\gamma^{b} H_Q(x),$ where $H_{Q}(x) \in \mathcal{R}_e[x]$ and $F_{Q}(x) \in \mathcal{T}_{e}[x]$ is either 0 or  $\text{ deg }F_Q(x) \leq n(bp^s-\nu)-1.$  

First let $1 \leq \rho \leq n\nu-n(b-1)p^s.$ Here we see that the RT weight of the codeword $Q(x)$ is $\rho$ if and only if $F_Q(x)=0$ and $\text{ deg }H_Q(x)=\rho-1.$ From this, we obtain $\mathcal{A}_\rho=(p^{m(e-b)}-1)p^{m(e-b)(\rho-1)}.$

 Next let $n\nu-n(b-1)p^s+1 \leq \rho \leq np^s.$ In this case, we see that the RT weight of the codeword $Q(x)$ is $\rho$ if and only if one of the following two conditions is satisfied: (i) $\text{ deg }F_Q(x)=\rho-n\nu+n(b-1)p^s-1$ and $H_Q(x)$ is either 0 or $\text{deg }H_Q(x) \leq \rho-1,$ and (ii) $F_Q(x)$ is either 0 or $\text{ deg }F_Q(x)\leq \rho-n\nu+n(b-1)p^s-2$ and $\text{ deg }H_Q(x) =\rho-1.$ From this, we obtain \vspace{-2mm}\begin{eqnarray*}\mathcal{A}_\rho & =& (p^m-1)p^{m(\rho-n\nu+n(b-1)p^s-1)}p^{m(e-b)\rho}+p^{m(\rho-n\nu+n(b-1)p^s-1)}(p^{m(e-b)}-1)p^{m(e-b)(\rho-1)}\\& =&p^{m(np^s(b-1)-n\nu-e+b-1)}(p^{m(e-b+1)}-1)p^{m(e-b+1)\rho}.\end{eqnarray*}
\vspace{-12mm}\end{enumerate}
This completes the proof of the theorem.\vspace{-2mm}\end{proof}
\vspace{-7mm}


\begin{thebibliography}{9}
\vspace{-2mm}\bibitem{abu} {T. Abualrub and R. Oehmke}, \newblock{On the generators of $\mathbb{Z}_{4}$  cyclic codes of length $2^e$}, \newblock{\it IEEE Trans. Inform. Theory } 49(9), pp. 2126-2133 (2003).
\vspace{-2mm}\bibitem{bat} {A. Batoul, K. Guenda and T. A. Gulliver}, \newblock{Some constacyclic codes over finite chain rings}, \newblock{\it Adv. Math. Commun. } 10(4), pp. 683-694 (2016).
 \vspace{-2mm}\bibitem{berl} {E. R. Berlekamp}, \newblock{\it Algebraic Coding Theory}, \newblock{McGraw-Hill Book Company}, New York (1968).
  \vspace{-2mm}\bibitem{calder} {A. R. Calderbank, A. R. Hammons Jr., P. V. Kumar, N. J. A. Sloane and P. Sol$\acute{e}$}, \newblock{A linear construction for certain Kerdock and Preparata codes}, {\it Bull. Amer. Math. Soc.} 29, pp. 218-222 (1993).
\vspace{-2mm}\bibitem{cao} {Y. Cao, Y. Cao, J. Gao and F. Fu}, \newblock{Constacyclic codes of length $p^sn$ over $\mathbb{F}_{p^m}+u\mathbb{F}_{p^m}$}, arXiv:1512.0140v1 [cs.IT] (2015).
\vspace{-2mm}\bibitem{chen} {B. Chen, H. Q. Dinh, H. Liu and L. Wang}, \newblock{Constacyclic codes of length $2p^s$ over $\mathbb{F}_{p^m}+u\mathbb{F}_{p^m},$} {\it Finite Fields Appl.} 37, pp. 108-130 (2016).
\vspace{-2mm}\bibitem{dinh4} {H. Q. Dinh}, \newblock{Constacyclic codes of length $p^s$ over $\mathbb{F}_{p^m}+u\mathbb{F}_{p^m}$}, \newblock{\it J. Algebra} 324(5), pp. 940-950 (2010).
\vspace{-2mm}\bibitem{dinh8} {H. Q. Dinh, H. Liu, X. S. Liu and S. Sriboonchitta}, \newblock{On structure and distances of some classes of repeated-root constacyclic codes over Galois rings}, \newblock{\it Finite Fields Appl.} 43, pp. 86-105 (2017).
\vspace{-2mm}\bibitem{dinh6} {H. Q. Dinh, H. D. T. Nguyen, S. Sriboonchitta and T. M. Vo}, \newblock{Repeated-root constacyclic codes of prime power lengths over finite chain rings}, \newblock{\it Finite Fields Appl.} 43, pp. 22-41 (2017).
\vspace{-2mm}\bibitem{dinh5} {H. Q. Dinh, L. Wang and S. Zhu}, \newblock{Negacyclic codes of length $2p^s$ over $\mathbb{F}_{p^m}+u\mathbb{F}_{p^m}$}, \newblock{\it Finite Fields Appl.} 31, pp. 178-201 (2015).
\vspace{-2mm}\bibitem{dinh} {H. Dinh and S. R. L$\acute{o}$pez-Permouth}, \newblock{Cyclic and negacyclic codes over finite chain rings}, \newblock{\it IEEE Trans. Inform. Theory} 50(8), pp. 1728-1744 (2004).
\vspace{-2mm}\bibitem{hammons} {A. R. Hammons, P. V. Kumar, A. R. Calderbank, N. J. A. Sloane and P. Sol$\acute{e}$}, \newblock{The  $\mathbb{Z}_{4}$-linearity of Kerdock, Preparata, Goethals, and related codes}, \newblock{\it IEEE Trans. Inform. Theory}  40(2), pp. 301-319 (1994).
\vspace{-2mm}\bibitem{kiah} {H. M. Kiah, K. H. Leung and S. Ling}, \newblock{ Cyclic codes over GR$(p^2,m)$ of length $p^k$} , \newblock{\it Finite Fields Appl.} 14, pp. 834-846 (2008).
\vspace{-2mm}\bibitem{liu} { X. Liu and X. Xu}, \newblock{Cyclic and negacyclic codes of length $2p^s$ over $\mathbb{F}_{p^m}+u\mathbb{F}_{p^m}$}, \newblock{\it Acta Math. Sci.} 34B(3), pp. 829-839 (2014).
\vspace{-2mm}\bibitem{mcdon} {B. R. McDonald}, \newblock{\it Finite Rings with Identity}, \newblock{Marcel Dekker Press}, New York (1974).
 \vspace{-2mm}\bibitem{nech} {A. A. Nechaev,} \newblock{Kerdock code in a cyclic form}, \newblock{\it Discrete Math. Appl.} 1, pp. 365-384 (1991).
\vspace{-2mm}\bibitem{norton} {G. Norton and A. S$\breve{a}$l$\breve{a}$gean-Mandache}, \newblock{ On the structure of linear cyclic codes over finite chain rings}, \newblock{\it Appl. Algebra Engrg. Comm. Comput.} 10(6), pp. 489-506 (2000).
\vspace{-2mm} \bibitem{rosen} {M.Y. Rosenbloom and M.A. Tsfasman}, \newblock{Codes for the $m$-metric}, \newblock{\it Probl. Inf. Transm.} 33, pp. 45-52 (1997).
\vspace{-2mm} \bibitem{sharma1} {H. Q. Dinh, A. Sharma,  S. Rani and S. Sriboonchitta}, \newblock{Cyclic and negacyclic codes of length $4p^s$ over $\mathbb{F}_{p^m}+u\mathbb{F}_{p^m}$}, to appear in {\it J. Algebra Appl.} 17 (9), 22 pages (2018). 
\vspace{-2mm} \bibitem{sharma2} {A. Sharma and S. Rani}, \newblock{Constacyclic codes of length $4p^s$ over $\mathbb{F}_{p^m}+u\mathbb{F}_{p^m}$},  arXiv:1707.06133.
\vspace{-2mm} \bibitem{sharma3} {A. Sharma and T. Sidana}, \newblock{Repeated-root constacyclic codes over the finite chain ring ${\mathbb{F}_{p^m}[u]/\langle u^3 \rangle},$} arXiv:1706.06269  [math.AC].
 \vspace{-2mm}\bibitem{sobh} {R. Sobhani and M. Esmaeili}, \newblock{Cyclic and negacyclic codes over the Galois ring GR$(p^2, m)$}, \newblock{\it Discrete Math. Appl.} 157, pp. 2892-2903 (2009).
\vspace{-2mm}\bibitem{zhao} {W. Zhao, X. Tang and Z. Gu}, \newblock{All $(\alpha+\beta u)$-constacyclic codes of length $np^s$ over $\mathbb{F}_{p^m}+u\mathbb{F}_{p^m},$} arXiv:1606.06428v1 [cs.IT].\end{thebibliography}
\end{document}